\documentclass[12pt,a4paper,reqno]{amsart}

\usepackage[margin=1in,footskip=0.25in]{geometry}
\usepackage{amsmath, amsthm, amssymb}
\usepackage{graphicx}
\usepackage{color}
\usepackage[abs]{overpic}
\usepackage{quiver}
\usepackage{multirow}
\usepackage{url}

\newtheorem{theorem}{Theorem}[section]

\newtheorem{remark}[theorem]{Remark}

\newtheorem{teo}{Theorem}[section]

\newtheorem{defi}{Definition}[section]
\newtheorem{prop}{Proposition}[section]

\newcommand{\sgn}{{\operatorname{sgn}}}

\newcommand{\re}{\mathbb{R}}

\date{}

\begin{document}
	
\title[Local bifurcations via a nonlinear double-regularization process
]{%The local bifurcations in a class of piecewise smooth Filipov system with a nonregular switching curve via a nonlinear double-regularization process
Local bifurcations in a class of piecewise-smooth Filippov systems with a nonregular switching curve via a nonlinear double regularization process}

\author{Claudio A. Buzzi$^1$}
\address{$^1$ Mathematics Department, Universidade Estadual Paulista Julio de Mesquita Filho, 15054-000 São José do Rio Preto, São Paulo, Brazil }
\email{claudio.buzzi@unesp.br}

\author{Yagor Romano Carvalho$^{2,3}$}
\address{$^2$ Mathematics Department, Universidade de São Paulo, 13566-590 São Carlos, São Paulo, Brazil}
\email{yagor.carvalho@usp.br}
\address{$^3$ Departament de Matemàtiques, Universitat Autònoma de Barcelona, 08193 Bellaterra, Barcelona, Spain}
\email{yagor.romano@uab.cat}

\dedicatory{Dedicated to the memory of Yagor's grandmothers}

%\allowdisplaybreaks[4]

\subjclass[2020]{34A34,34A36,34A60,34C23,34D30,37G10}
	%	37G15, 37D45,  34A36  34A60}

\keywords{structural stability, bifurcation theory, piecewise-smooth vector fields, regularization, nonregular switching curves}
%discontinuous piecewise differential systems 
%\textcolor{red}{(arrumar)}

\begin{abstract}
%We are interested in the study of piecewise smooth vector fields, in the case where the edges of the smooth parts of the fields are not regular via a regularization process. More precisely, we are interested in an analysis of the preservation of the bifurcations of a class of piecewise smooth vector fields with a non-regular discontinuity set according to a regularization process, which approximates such a class by a continuous vector field.
%O projeto é dedicado ao estudo dos campos de vetores suaves por partes, no caso em que os bordos das partes suaves dos campos não são regulares. Dentro deste contexto, primeiramente será feito um estudo visando uma classificação dos sistemas que não são estruturalmente estáveis. Necessariamente tal estudo passa por uma análise das bifurcações de codimensão baixa: 1 e 2. Finalmente o projeto será concluído com uma análise da regularização dos sistemas estudados nas etapas anteriores. Para realizar essa última etapa será necessário desenvolver uma generalização do método de regularização de Sotomayor e Teixeira para o caso singular.
%We are interested in analyzing the preservation of bifurcations in a class of piecewise smooth vector fields with a nonregular discontinuity set under a smoothing process, which approximates this class by a smooth vector field. 
%We examine cases in which the codimension is either preserved or not, as well as whether the property of the bifurcation being generic is maintained or not.
We are interested in analyzing the preservation of bifurcations in a class of piecewise smooth vector fields with a nonregular switching set under a smoothing process that approximates them by smooth vector fields. We examine cases in which the codimension is either preserved or altered, as well as whether the generic nature of the bifurcation is maintained.

\end{abstract}
\maketitle

\section{Introduction}

%The area of Dynamical Systems has developed and today has a lot of ramifications, for instance, the study of structural stability and bifurcations. A planar differential system is structurally stable if its dynamics do not change under small perturbations, that is if their phase portraits are qualitatively similar. Among the most widespread stability theories for dynamical systems is that of Andronov
% \cite{AndroLeontGor1973} and Peixoto \cite{PeiPei1959,Pei1962}, which provided necessary and sufficient conditions for smooth two-dimensional differential systems to be structurally stable.  With the understanding of structurally stable systems, then the focus was turned to studying bifurcations. If a planar differential system is not structurally stable then it belongs to the bifurcation set. In this way, the qualitative structure of the solutions set changes suddenly when the planar differential system passes through a point  in the bifurcation set, see \cite{Kuznetsov1998,Perko2013}. Moreover, in some sense, it is possible to classify a bifurcation with respect to its level of degeneracy. In a few words, the minimum number of parameters explaining the dynamic of all differential systems in a neighborhood of a fixed element of bifurcation set is called its codimension. Consequently, as greater the codimension as greater the level of degeneration.

The area of Dynamical Systems has developed significantly and now encompasses many branches, such as the study of structural stability and bifurcations. A planar differential system is said to be \emph{structurally stable} if its qualitative dynamics do not change under small perturbations; that is, its phase portraits remain topologically equivalent. Among the most influential theories of structural stability for dynamical systems are those developed by Andronov and his collaborators~\cite{AndroLeontGor1973}, and by Peixoto~\cite{PeiPei1959,Pei1962}, who provided necessary and sufficient conditions for the structural stability of smooth two-dimensional differential systems.

Once structurally stable systems were understood, attention naturally shifted toward the study of bifurcations. If a planar differential system is not structurally stable, then it belongs to the \emph{bifurcation set}. In this case, the qualitative structure of the solution set changes abruptly when the system passes through a point in the bifurcation set; see, for instance,~\cite{Kuznetsov1998,Perko2013}. Moreover, bifurcations can be classified according to their degree of degeneracy. Roughly speaking, the \emph{codimension} of a bifurcation is the minimum number of parameters required to describe all possible dynamics of differential systems in a neighborhood of a given vector field in the bifurcation set. Consequently, higher codimension corresponds to a higher level of degeneracy.

%In this direction, we are interested in the analysis of stability and bifurcations of planar piecewise smooth systems through a smoothing process. The interest in such systems has increased significantly since the pioneering works of Van der Pol \cite{Van1920,Van1926} and Andronov \cite{AndroLeontGor1973,AndVitKha1987}, mainly motivated by its range of applications in several areas of applied sciences. Using techniques involving control theory and differential inclusions we have some results about dynamics in the switching curve \cite{AlexSei1998,DieLop2011}. Piecewise smooth differential systems are used for modeling phenomena presenting some abrupt behavior in their dynamics.  There are problems in engineering, physics, and biology producing this kind of behavior, see \cite{BerBudChaKow2008,Hen1997,KusGraRin2003,LeiNij2004,MakLam2012,MakLamb2012,Murray2002,SimJohn2010,ZhuMose2003}. The terminology and basic results of differential systems in this context were also stated by Filippov in \cite{Fil1988}. And, piecewise smooth systems using the convention of Fillipov are called Filippov systems.

In this direction, we are interested in the analysis of stability and bifurcations of planar piecewise smooth systems via a smoothing process. Interest in such systems has increased significantly since the pioneering works of Van der Pol~\cite{Van1920,Van1926} and Andronov~\cite{AndroLeontGor1973,AndVitKha1987}, mainly motivated by their wide range of applications in several areas of the applied sciences. 

Using techniques from control theory and differential inclusions, several results have been obtained concerning the dynamics on the switching manifold; see, for instance,~\cite{AlexSei1998,DieLop2011}. Piecewise smooth differential systems are commonly used to model phenomena that exhibit abrupt changes in their dynamics. Such behavior arises in problems from engineering, physics, and biology; see, for example,~\cite{BerBudChaKow2008,Hen1997,KusGraRin2003,LeiNij2004,MakLam2012,MakLam2012,Murray2002,SimJohn2010,ZhuMose2003}. 

The terminology and fundamental results for differential systems in this setting were established by Filippov~\cite{Fil1988}. Accordingly, piecewise smooth systems defined using Filippov’s convention are referred to as \emph{Filippov systems}.

%Inspired by the works \cite{Kozlova1984,LliSot1996,SotTei1998,Teixeira1977,Teixeira1990}, in the literature there is a generic Structural Stability Theorem for Filippov systems on surfaces \cite{BrouPugSlo2001}, being a generalization of Peixoto's result \cite{Pei1962}. In addition, for planar Filippov systems, Kuznetsov, Gragnani, and Rinaldi presented a bifurcation list of codimension one \cite{KusGraRin2003}. Using the general approach given in \cite{Sotomayor1974}, in 2011 Guardia, Seara, and Teixeira characterized generic bifurcations of codimension two \cite{GuaSeaTeixeira2011}, see also \cite{DiPagPon2008} for some higher codimension bifurcations.

Inspired by the works~\cite{Kozlova1984,LliSot1996,SotTei1998,Teixeira1977,Teixeira1990}, a generic Structural Stability Theorem for Filippov systems on surfaces was established in~\cite{BruPuSim2001}, as a generalization of Peixoto’s classical result~\cite{Pei1962}. In addition, for planar Filippov systems, Kuznetsov, Gragnani, and Rinaldi presented a complete list of codimension-one bifurcations in~\cite{KusGraRin2003}. Using the general approach developed in~\cite{Sotomayor1974}, Guardia, Seara, and Teixeira characterized the generic codimension-two bifurcations in~\cite{GuaSeaTeixeira2011}; see also~\cite{DiPagPon2008} for results concerning higher-codimension bifurcations.

%Modeling rigid-body dynamics with friction is difficult due to several discontinuities appearing in the behavior of rigid bodies and in the Coulomb friction law \cite{Stewart2000}. Therefore, during the last few years, considerable efforts have been made to develop more tools for studying this kind of system. Among such tools, we highlight the smoothing process, consisting of a way to approximate piecewise smooth systems by smooth systems.
%The advantage of a smoothing process is that all the classic results of smooth systems can be applied in this context. One of the most widespread smoothing processes in the research literature corresponds to the Sotomayor–Teixeira regularization \cite{LliTei1997,SotTei1998}. This fact is mainly because of its intrinsic relation with Filippov’s convention in systems having a regular switching curve, see \cite{TeiSilva2012}. For example, in \cite{BuzSilTei2006, LliSilTei2007,LliSilTei2008}, using a mix of singular perturbation theory, blowing-up method, and Sotomayor–Teixeira regularization the authors studied the local behavior around a switching regular.

Modeling rigid-body dynamics with friction is challenging due to the presence of several discontinuities in the behavior of rigid bodies and in the Coulomb friction law~\cite{Stewart2000}. Consequently, in recent years, considerable efforts have been devoted to developing analytical tools for studying this type of system. Among these tools, we highlight smoothing processes, which provide a way to approximate piecewise smooth systems by smooth ones. An important advantage of smoothing processes is that they allow the application of the classical theory of smooth dynamical systems. One of the most widely used smoothing techniques in the literature is the Sotomayor--Teixeira regularization~\cite{LliTei1997,SotTei1998}. This is mainly due to its intrinsic connection with Filippov’s convention for systems with a regular switching curve; see~\cite{TeiSilva2012}. For instance, in~\cite{BuzSilTei2006,LliSilTei2007,LliSilTei2008}, by combining singular perturbation theory, the blow-up method, and Sotomayor--Teixeira regularization, the authors studied the local dynamics near a regular switching manifold.

%Sotomayor-Teixeira regularization can be seen as the main example in the set of linear regularizations \cite{SilMezNov2022,PerRonSil2023}, and as an alternative to it, we can cite the hysteresis switching  \cite{Liberzon2003}. Jeffrey \cite{Jeffrey2014} studied a simple piecewise smooth model of an object's stick-slip motion on a flat surface, revealing that linear regularizations might not be sufficiently general for real applications. Therefore,  linear regularizations neglect a vast expanse of phenomena occurring in smooth systems that approach piecewise smooth systems, \cite{Jeffrey2014,NovJeff2015}. 
%
%With the discussion until here, assuming that bifurcations of codimension zero are equivalent to structural stability then in the study of bifurcations, some natural questions arise: What happens to a particular bifurcation of codimension $k$ in the world of piecewise smooth systems through some smoothing process? Is there a change in the bifurcation codimension?

The Sotomayor--Teixeira regularization can be regarded as a canonical example within the class of linear regularizations~\cite{SilMezNov2022,PerRonSil2023}. As an alternative approach, we can cite the hysteresis switching  \cite{Liberzon2003}. Jeffrey~\cite{Jeffrey2014} studied a simple piecewise smooth model of an object undergoing stick--slip motion on a flat surface, showing that linear regularizations may not be sufficiently general for real applications. Consequently, linear regularizations can neglect a wide range of phenomena that arise in smooth systems approaching piecewise smooth dynamics~\cite{Jeffrey2014,NovJeff2015}.

In view of the discussion until here, and assuming that codimension-zero bifurcations correspond to structural stability, several natural questions arise in the study of bifurcations. What happens to a given bifurcation of codimension $k$ in a piecewise smooth system under a smoothing process? In particular, is the bifurcation codimension preserved or altered?

%Facing these questions, some authors have been working to find answers. Considering Filippov systems defined in the two-dimensional sphere partitioned into two connected components separated by a regular switching curve,  Sotomayor and Teixeira \cite{SotTei1998},  using its regularization, provided conditions ensuring that regularized vector fields are structurally stable. Sotomayor and Machado \cite{SotoMac2002} adjusted the conditions given in \cite{SotTei1998} to planar Filippov systems in a domain partitioned into two subdomains also separated by a regular switching curve. Moreover, considering the planar Filippov systems described in \cite{SotoMac2002} and the bifurcations classified in \cite{GuaSeaTeixeira2011}, then using a particular case of Sotomayor-Teixeira regularization, some answers about the previous question for $k=1,2$ were given in the works \cite{BuzSan2019,Maciel2009}. In \cite{Maciel2009}, the authors confirmed that codimension one bifurcations, in the world of piecewise smooth systems, are preserved 
%in the world of smooth systems. And, in the same line, Buzzi and Santos \cite{BuzSan2019} proved that the codimension two of the saddle-fold singularity is also preserved. To understand how piecewise smooth systems near the fold singularity evolve under smoothing processes is also very important to mention \cite{BonLarTere2018,BonTere2016,KriHog2015,NovRon2021}.

Facing these questions, several authors have sought answers. For Filippov systems defined on the two-dimensional sphere, partitioned into two connected components by a regular switching curve, Sotomayor and Teixeira~\cite{SotTei1998} used their regularization to establish conditions guaranteeing that the regularized vector fields are structurally stable. Sotomayor and Machado~\cite{SotoMac2002} later adapted these conditions to planar systems in domains similarly divided by a regular switching curve. Moreover, considering the planar Filippov systems described in \cite{SotoMac2002} and the bifurcations classified in \cite{GuaSeaTeixeira2011}, a particular application of the Sotomayor–Teixeira regularization provided insights for codimensions $k=1,2$~\cite{BuzSan2019,Maciel2009}. Specifically, Maciel~\cite{Maciel2009} confirmed that codimension-one bifurcations in piecewise smooth systems are preserved under smoothing in the world of smooth systems, while Buzzi and Santos~\cite{BuzSan2019} demonstrated the preservation of the codimension-two saddle–fold bifurcation. The evolution of piecewise smooth systems near fold singularities under smoothing processes is further explored in~\cite{BonLarTere2018,BonTere2016,KriHog2015,NovRon2021}.

%All the references cited in the last paragraph are situations where Filippov’s convention can be applied. However, there exist some problems modeled by piecewise smooth systems having a nonregular switching curve \cite{AprBanVanBruBouFaIrRadRinVee2012, FanXueChen2018, LucGaj2006,TanOsoBer2009}. In these cases, the switching curve can possess codimension greater than one in some points, and Filippov’s convention \cite{Fil1988} does not apply. Because of this, a few years ago it has been increased the study of these kinds of systems \cite{BuzMedTor2020, BerBudCha2001, LliTei2015, Nov2014} where in the works  \cite{LliSilTei2009, LliSilTei2015} for example, the authors provided a systematic approach to study them. And, initial primordials works of stability and bifurcation must to be highlighted \cite{SanTon2023, LarSeaTei2021}.

All the references cited in the previous paragraph concern situations where Filippov’s convention can be applied. However, there exist problems modeled by piecewise smooth systems with a nonregular switching curve \cite{AprBanVanBruBouFaIrRadRinVee2012, FanXueChen2018, LucGaj2006, TanOsoBer2009}. In these cases, the switching curve may have codimension greater than one at certain points, so Filippov’s convention \cite{Fil1988} does not apply. Consequently, in recent years, the study of such systems has increased \cite{BuzMedTor2020, BerBudCha2001, LliTei2015, Nov2014}. For instance, in \cite{LliSilTei2009, LliSilTei2015}, the authors presented a systematic approach for analyzing these systems. Additionally, the foundational works on stability and bifurcation should be highlighted \cite{SanTon2023, LarSeaTei2021}.

In this paper, we aim to analyze low-codimension bifurcations in a class of planar piecewise smooth systems with a nonregular switching curve under a smoothing process. The analysis focuses on determining whether the codimension of a bifurcation is preserved after the smoothing. Our results are directly applicable to a given system, as we work in the original variables of the system.

% More concretely, we have already seen the natural rise of piecewise smooth systems in the context of many real applications. Motivated by equations expressed as
% \begin{equation}\label{eqmotiv}
% 	\ddot{x} + a \dot x = sgn( x \, f(x,\dot{x})),
% \end{equation}  
% which are founded in Control Theory and Engineering, see \cite{Anosov1959,PerBar2002}. Moreover, this kind of discontinuity was introduced to solve a stabilization problem \cite{Barbashin1970}. In this way, the authors in  \cite{LarSeaTei2021} understood that was necessary a systematic program toward the bifurcation theory for the system \eqref{eqmotiv} with $f(x,\dot{x})=x \cdot \dot{x}$.
  
  More concretely, we have already seen the natural emergence of piecewise smooth systems in the context of many real-world applications. Motivated by equations of the form
  \begin{equation}\label{eqmotiv}
  	\ddot{x} + a \dot{x} = \mathrm{sgn}\big(x \, f(x,\dot{x})\big),
  \end{equation}
  which arise in Control Theory and Engineering, see \cite{Anosov1959,PerBar2002}, this type of discontinuity was also introduced to address a stabilization problem \cite{Barbashin1970}. In this context, the authors in \cite{LarSeaTei2021} recognized the necessity of a systematic program for the bifurcation theory of system \eqref{eqmotiv}, particularly when $f(x,\dot{x}) = x \cdot \dot{x}$.

%  
%     In this direction,  let $X, Y: \mathcal{U} \subset \re^2 \rightarrow \re^2$ two sufficiently smooth vector fields defined in a bounded neighborhood $U$ of the origin. Considering $f:\mathcal{U} \subset \re^2  \rightarrow x_1 \, x_2 \in \re$ such that $f(x_1,x_2) = x_1 \, x_2 $, follow that the origin is a nondegenerated critical point of $f$. And, $\Sigma = f^{-1}(0)$ is a nonregular curve which is  the union of two lines that intersect transversally at the origin, that is, $\Sigma=\Sigma_1 \cup \Sigma_2$ with 
     
     In this context, let $X, Y: \mathcal{U} \subset \mathbb{R}^2 \rightarrow \mathbb{R}^2$ be two sufficiently smooth vector fields defined on a bounded neighborhood $\mathcal{U}$ of the origin. Consider the function $f: \mathcal{U} \subset \mathbb{R}^2 \rightarrow \mathbb{R}$ defined by $f(x_1,x_2) = x_1 x_2$. Then, the origin is a nondegenerate critical point of $f$, and $\Sigma = f^{-1}(0)$ is a nonregular curve given by the union of two lines intersecting transversally at the origin, that is, $\Sigma = \Sigma_1 \cup \Sigma_2$, with
  \begin{equation*}
  	\Sigma_1=\{(x_1,x_2) \in \mathcal{U}  :x_1=0\} \; \; \;  \; \mbox{and} \; \; \; \;  \Sigma_2=\{(x_1,x_2) \in \mathcal{U}: x_2=0\}.  
  \end{equation*}
  %     Since $\Sigma$ can be seen as the union of two regular manifolds, $\Sigma= \Sigma_1 \cup \Sigma_2$,  then we can 
 In addition, we decompose $\Sigma_1$ as $\Sigma_1 = \overline{\Sigma_1^+ \cup \Sigma_1^-}$, where
  \begin{equation*}
  	\Sigma_1^+ =\{ (0,x_2) \in \Sigma_1 : x_2>0\} \; \; \;  \; \mbox{and} \; \; \; \; \Sigma_1^- =\{ (0,x_2) \in \Sigma_1 : x_2<0\} 
  \end{equation*}
  are regular curves. Analogously, we can write $\Sigma_2 = \overline{\Sigma_2^+ \cup \Sigma_2^-}$. From now on, let $\Omega$ denote the set of all piecewise vector fields $Z=(X,Y)$ such that
  %which can be described as
  \begin{equation}\label{sistemacruz}
  	Z(p) = 
  	%\left\{ \begin{array}{l}
  		X(p) \; \mbox{if} \; p \in \overline{\mathcal{U}^+} \; \; \;  \; \mbox{and} \; \; \; \;
  		%=\overline{\Sigma_+^+ \cup \Sigma_-^+}
  		%	\\
  		Z(p) =  Y(p) \; \mbox{if} \; p \in \overline{\mathcal{U}^-}
  		% =\overline{\Sigma_+^- \cup \Sigma_-^-}\
  		%	\end{array} \right. ,
  \end{equation}
      where $\mathcal{U}^+ = \{(x_1,x_2) \in \mathcal{U} : f(x_1,x_2) > 0\}$ and 
    $\mathcal{U}^- = \{(x_1,x_2) \in \mathcal{U} : f(x_1,x_2) < 0\}$ are the open sets separated by $\Sigma$, such that 
    $\mathcal{U}^\pm = \Sigma_+^\pm \cup \Sigma_-^\pm$, with 
    $\Sigma_+^\pm = \mathcal{U}^\pm \cap \Sigma_1^+$ and $\Sigma_-^\pm = \mathcal{U}^\pm \cap \Sigma_1^-$.  
    Observe that the system \eqref{sistemacruz} is a planar piecewise vector field with a nonregular switching curve 
    $\Sigma = f^{-1}(0)$, see Figure \ref{figura4regioes}.

 \begin{figure}[!htb]
 	
 	\begin{center}

 		\begin{overpic}[scale=0.4,tics=5]{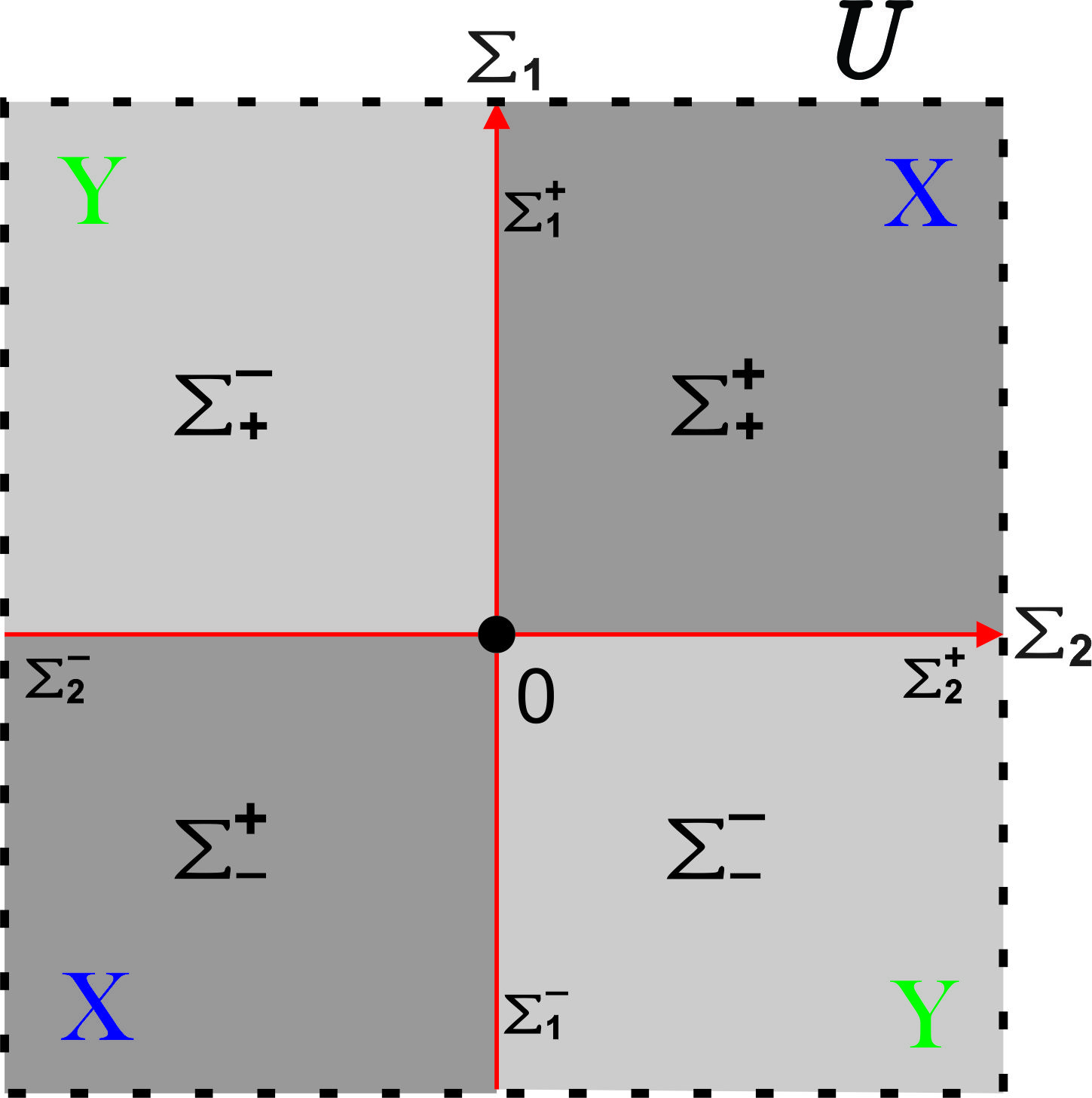}

 		\end{overpic}

 	\end{center}
 	
 		\caption{Decomposition of the set $\mathcal{U}$.
 		}\label{figura4regioes}
 		
 \end{figure}

Considering Filippov's convention on $\Sigma \setminus \{(0,0)\}$, choosing a definition to remove the ambiguity of the orbits passing through the origin, and using the concepts of local $\Sigma$-equivalence and weak equivalence of unfoldings for the systems in $\Omega$ \eqref{sistemacruz}, Larrosa, M-Seara, and Teixeira \cite{LarSeaTei2021} established, near the origin, the generic conditions for the set of locally $\Sigma$-structurally stable vector fields, denoted by $\Omega_0 \subset \Omega$. They also classified the fields exhibiting a generic bifurcation of codimension one at the origin, $\Xi_1 \subset \Omega_1 = \Omega \setminus \Omega_0$. More details are provided in Section \ref{section2}. 

Our aim is to study the smoothing of the families $\Omega_0$ and $\Xi_1$. Before presenting our results, we highlight the smoothing process to be used. In general, regularizing a piecewise smooth vector field $Z$ in \eqref{sistemacruz} consists of obtaining a $k$-parameter family of smooth vector fields $Z^R_{\epsilon}$, with $\epsilon \in \mathbb{R}^k$, such that $Z^R_{\epsilon}$ converges pointwise to $Z$ in $U \setminus \Sigma$ with respect to a given topology. The characteristics of $Z$ and each $Z^R_{\epsilon}$ can exhibit intricate and fascinating behavior. Utkin \cite{Utkin2013} and later Panazzolo and Silva \cite{PanPau2017} observed that different regularization processes may lead to different sliding dynamics. 

As previously mentioned, the behavior of the regularization $Z^R_{\epsilon}$ provides valuable information for understanding the dynamics of $Z$; see also \cite{LliMerNov2015}, where averaging theory was used in this context. Regularization has been applied to study the dynamics of piecewise smooth vector fields whose nonregular switching curve contains points of codimension two, formed as the union of two regular curves \cite{DieEli2015}. Inspired by this, we use the Sotomayor–Teixeira double regularization \cite{SilNun2019,PanPau2017}, a generalization of the Sotomayor–Teixeira regularization applied twice.

%In this way, let us consider $X_{+,+}=X_{-,-}=X$ and $X_{-,+}=X_{+,-}=Y$, then for the vector fields in \eqref{sistemacruz}, the Sotomayor-Teixeira double regularization will be 
In this way, let us consider $X_{+,+} = X_{-,-} = X$ and $X_{-,+} = X_{+,-} = Y$. Then, for the vector fields defined in \eqref{sistemacruz}, the Sotomayor--Teixeira double regularization is given by
\begin{equation}\label{eq2par}
	\begin{aligned}
	Z^R_{\varepsilon, \eta}(p) &= \displaystyle\sum_{ \alpha,\beta \in \lbrace +,- \rbrace } \frac{1}{4} \left( 1 + \alpha \,  \varphi_\varepsilon(x_1) \right) \left( 1 + \beta  \, \psi_\eta(x_2) \right) \, X_{ \alpha, \beta }(p)= \\
	& \phantom{\displaystyle\sum_{ \alpha,\beta \in \lbrace +,- \rbrace }}=\frac{1}{2}  \left( 1 +\varphi_\varepsilon(x_1) \, \psi_\eta(x_2)\right) \, X(p) + \frac{1}{2} \,  \left( 1 - \varphi_\varepsilon(x_1) \, \psi_\eta(x_2) \right) \,Y(p) ,
		\end{aligned}
\end{equation}
% where $\varphi_\varepsilon(x_1)=\varphi(x_1/\varepsilon)$ and $\psi_\eta(x_2)=\psi(x_2/\eta)$. Here, $\varepsilon$ and $\eta$ are the {\it regularization parameters} which are suficiently small and,  $\varphi,\psi$ are sufficiently piecewise smooth functions called transitions functions satisfying  $\varphi(s)=\psi(s)=sgn(s)$ for $|s| \geq 1$ and $\varphi^\prime(s)=\psi^\prime(s)>0$ for $|s|<1$. 
where $\varphi_\varepsilon(x_1) = \varphi(x_1/\varepsilon)$ and $\psi_\eta(x_2) = \psi(x_2/\eta)$. Here, $\varepsilon$ and $\eta$ are the \textit{regularization parameters}, which are sufficiently small, and $\varphi$ and $\psi$ are sufficiently smooth functions, called \textit{transition functions}, satisfying $\varphi(s) = \psi(s) = \operatorname{sgn}(s)$ for $|s| \geq 1$ and $\varphi'(s) = \psi'(s) > 0$ for $|s| < 1$.
 %Observe that, i
% In the equation \eqref{eq2par}, when $x_1\, x_2 \neq 0$ we have two Sotomayor-Teixeira regularizations, respectively, for $\eta>0$ and $\varepsilon>0$:
 In equation~\eqref{eq2par}, when $x_1 x_2 \neq 0$, we obtain two Sotomayor--Teixeira regularizations, corresponding respectively to $\eta > 0$ and $\varepsilon > 0$:
 
\begin{equation*}\label{eq1par} 
	Z_{\pm, \eta}^R(p) = \displaystyle\sum_{ \beta \in \lbrace +,- \rbrace }  \frac{1}{2}\left( 1 \pm \beta  \psi_\eta(x_2) \right) X_{ \pm, \beta }(p) \; \, \mbox{and} \; \, Z_{\varepsilon, \pm}^R(p) = \displaystyle\sum_{ \alpha \in \lbrace +,- \rbrace }  \frac{1}{2}\left( 1 \pm  \alpha  \varphi_\varepsilon(x_1) \right) X_{ \alpha, \pm }(p),
\end{equation*}
where the switching curve of $Z_{\pm, \eta}^R$ and $Z_{\varepsilon, \pm}^R $ are, $\Sigma_2^\pm$ and $ \Sigma_1^\pm$. Sotomayor-Teixeira double regularization  permit the exact configuration of Figure \ref{figuraregdulpa}.
\begin{figure}[!htb]\label{regularizacaocruz}
	
	\begin{center}

		\vspace{0.4cm}
		
		\begin{overpic}[scale=0.9,tics=5]{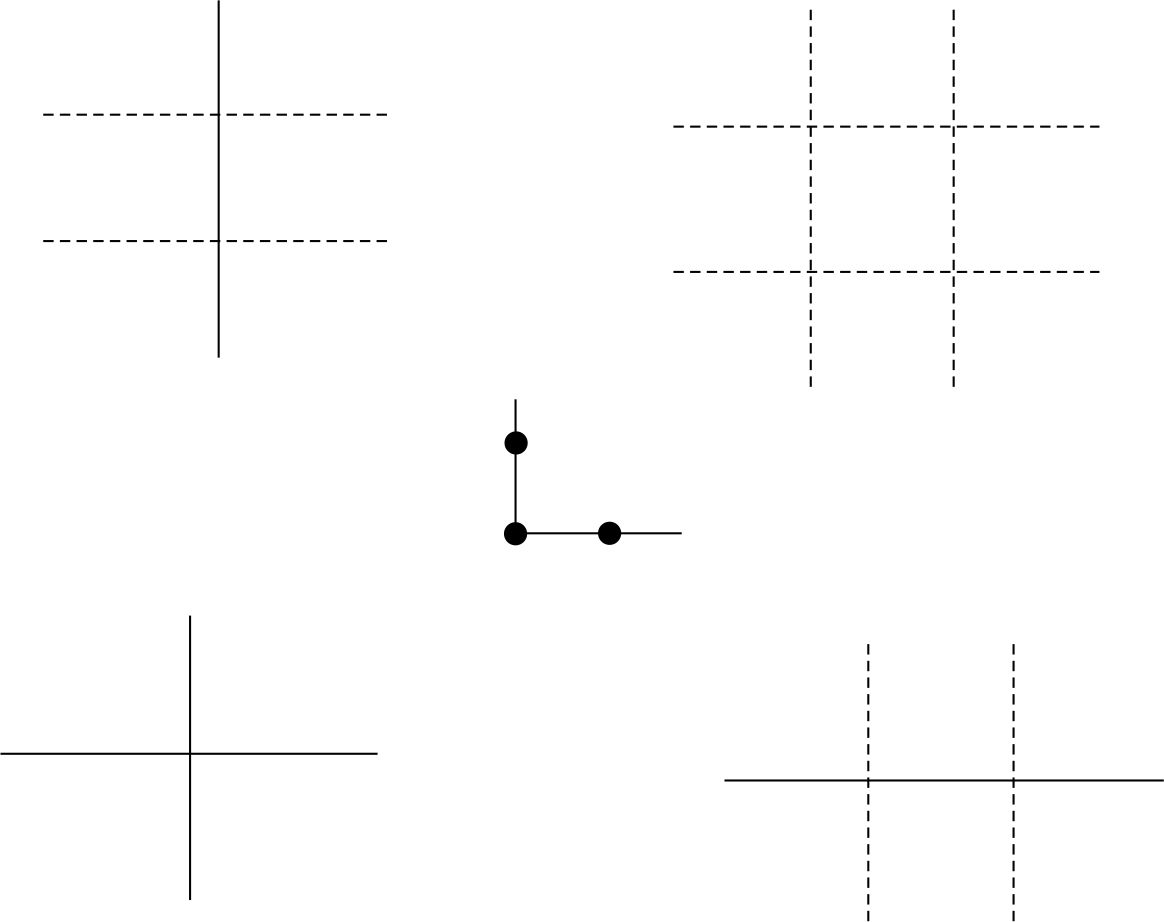}
			\put(-16,159){{\parbox{0.5\linewidth}{$2 \eta \; \bigg\updownarrow$}}}
			
			\put(19,158){{\parbox{0.5\linewidth}{$Z_{-, \eta}^R$}}}
			
			\put(54,158){{\parbox{0.5\linewidth}{$Z_{+, \eta}^R$}}}
			
			\put(17,182){{\parbox{0.5\linewidth}{$X_{-, +}$}}}
			
			\put(53,182){{\parbox{0.5\linewidth}{$X_{+, +}$}}}
			
			\put(17,131){{\parbox{0.5\linewidth}{$X_{-, -}$}}}
			
			\put(53,131){{\parbox{0.5\linewidth}{$X_{+, -}$}}}
			
			\put(83,113){{\parbox{0.5\linewidth}{$\displaystyle\nwarrow$}}}
			
			\put(100,103){{\parbox{0.5\linewidth}{$\eta$}}}
			
			%%%%%%%%%%%%%%%%%%%%%%%
			
			\put(187,94){{\parbox{0.5\linewidth}{$2\varepsilon$}}}
			
			\put(181,107){{\parbox{0.5\linewidth}{$\displaystyle\longleftrightarrow$}}}

			\put(239,154){{\parbox{0.5\linewidth}{$\bigg\updownarrow \; 2\eta$}}}
			
			\put(131,105){{\parbox{0.5\linewidth}{$\displaystyle\nearrow$}}}
			
			\put(148,154){{\parbox{0.5\linewidth}{$Z_{-, \eta}^R$}}}
			
			\put(211,154){{\parbox{0.5\linewidth}{$Z_{+, \eta}^R$}}}
			
			\put(182,154){{\parbox{0.5\linewidth}{$Z_{\varepsilon, \eta}^R$}}}
			
			\put(182,182){{\parbox{0.5\linewidth}{$Z_{\varepsilon, +}^R$}}}
			
			\put(182,125){{\parbox{0.5\linewidth}{$Z_{\varepsilon, -}^R$}}}
			
			\put(147,183){{\parbox{0.5\linewidth}{$X_{-, +}$}}}
			
			\put(212,183){{\parbox{0.5\linewidth}{$X_{+, +}$}}}
			
			\put(147,124){{\parbox{0.5\linewidth}{$X_{-, -}$}}}
			
			\put(212,124){{\parbox{0.5\linewidth}{$X_{+, -}$}}}
			
			%%%%%%%%%%%%%%%%%%%%%%
			
			\put(140,60){{\parbox{0.5\linewidth}{$\searrow$}}}

			\put(130,70){{\parbox{0.5\linewidth}{$\varepsilon$}}}

			\put(199,-22){{\parbox{0.5\linewidth}{$2\varepsilon$}}}
			
			\put(193,-10){{\parbox{0.5\linewidth}{$\displaystyle\longleftrightarrow$}}}
			
			\put(194,40){{\parbox{0.5\linewidth}{$Z_{\varepsilon, +}^R$}}}
			
			\put(194,12){{\parbox{0.5\linewidth}{$Z_{\varepsilon, -}^R$}}}

			\put(159,43){{\parbox{0.5\linewidth}{$X_{-, +}$}}}
			
			\put(224,43){{\parbox{0.5\linewidth}{$X_{+, +}$}}}
			
			\put(159,10){{\parbox{0.5\linewidth}{$X_{-, -}$}}}
			
			\put(224,10){{\parbox{0.5\linewidth}{$X_{+, -}$}}}
			
			%%%%%%%%%%%%%%%%

			\put(90,65){{\parbox{0.5\linewidth}{$\swarrow$}}}
			
			\put(8,50){{\parbox{0.5\linewidth}{$X_{-, +}$}}}
			
			\put(48,50){{\parbox{0.5\linewidth}{$X_{+, +}$}}}
			
			\put(8,18){{\parbox{0.5\linewidth}{$X_{-, -}$}}}
			
			\put(48,18){{\parbox{0.5\linewidth}{$X_{+, -}$}}}
			%			\put(6,-20){{\parbox{0.5\linewidth}{Fonte:  Elaborado pelo autor.}}}
		\end{overpic}
		
		\vspace{0.5cm}
		
		\caption{Sotomayor--Teixeira double regularization.}\label{figuraregdulpa}
	\end{center}
	
\end{figure}

%A double regularization of  the piecewise vector field \eqref{sistemacruz} is a two-parameter family of smooth vector fields
%$Z_{\varepsilon, \eta}^R:\mathcal{U} \rightarrow \re^2$, such  that $Z_{\varepsilon, \eta}^R$ converges pointwise to $Z$ on $\mathcal{U} \setminus \Sigma$, when $\varepsilon, \eta \rightarrow 0$. Denoting by 
%$$\left[ X,Y\right]= \left\{ \frac{1}{2} \left( 1 +\lambda  \right) \, X(p) +\frac{1}{2} \left( 1 - \lambda \,  \right) \, Y(p) : \lambda \in [-1,1] \right\}$$
%the convex combination of $X$ and $Y.$ Then, a double regularization of \eqref{sistemacruz} is linear if $Z_{\varepsilon, \eta}^R(p) \in \left[ X(p),Y(p)\right]$, for any $p \in \mathcal{U}$. The Sotomayor-Teixeira double regularization \eqref{eq2par} is an example of a linear regularization. It is also possible to consider more general regularizations. A continuous combination of \eqref{sistemacruz} is an one-parameter family of smooth vector fields $\tilde{Z}^R(\lambda, \cdot),$ with $\lambda \in [-1,1] $ such that $\tilde{Z}^R(1,p)=X(p)$ and $\tilde{Z}^R(-1,p)=Y(p)$, for all $p \in \mathcal{U}$. The double regularization $Z_{\varepsilon, \eta}^R$ of \eqref{sistemacruz} will be called nonlinear if there exists a continuous combination $\tilde{Z}^R$ sucht that $Z_{\varepsilon, \eta}^R \in \{\tilde{Z}^R(\lambda,p): \lambda \in [-1,1]\}$.

A double regularization of the piecewise vector field \eqref{sistemacruz}
is a two-parameter family of smooth vector fields
$Z_{\varepsilon,\eta}^R : \mathcal{U} \to \re^2$
such that $Z_{\varepsilon,\eta}^R$ converges pointwise to $Z$
on $\mathcal{U} \setminus \Sigma$ as $\varepsilon,\eta \to 0$.
Denote by
\[
\bigl[X,Y\bigr]
=
\left\{
\frac{1}{2}(1+\lambda)\,X(p)
+
\frac{1}{2}(1-\lambda)\,Y(p)
:\;
\lambda \in [-1,1]
\right\}
\]
the convex combination of the vector fields $X$ and $Y$ at the point $p$.
We say that a double regularization of \eqref{sistemacruz} is
\emph{linear} if
\[
Z_{\varepsilon,\eta}^R(p) \in \bigl[X(p),Y(p)\bigr],
\qquad \text{for all } p \in \mathcal{U}.
\]
The Sotomayor--Teixeira double regularization \eqref{eq2par}
is an example of a linear regularization.

More general regularizations can also be considered.
A \emph{continuous combination} of \eqref{sistemacruz} is a one-parameter
family of smooth vector fields
$\tilde{Z}^R(\lambda,\cdot)$, with $\lambda \in [-1,1]$,
such that
$\tilde{Z}^R(1,p)=X(p)$ and $\tilde{Z}^R(-1,p)=Y(p)$
for all $p \in \mathcal{U}$.
A double regularization $Z_{\varepsilon,\eta}^R$ of \eqref{sistemacruz}
is called \emph{nonlinear} if there exists a continuous combination
$\tilde{Z}^R$ such that
\[
Z_{\varepsilon,\eta}^R(p)
\in
\{\tilde{Z}^R(\lambda,p) : \lambda \in [-1,1]\},
\qquad \text{for all } p \in \mathcal{U}.
\]

%The transition functions used in \eqref{eq2par} have positive derivative on the interval $(-1,1)$, because of this, we say that they are monotonous transition functions, see Figure \ref{transicoes} (a)-(b). We can consider more general transition functions, for instance allowing them not to be monotonous \cite{PerRonSil2023}, called nonmonotonous transition functions (Figure \ref{transicoes} (c)).
The transition functions used in \eqref{eq2par} have positive derivatives on the interval
$(-1,1)$, and are therefore referred to as \emph{monotone transition functions} (see
Figure~\ref{transicoes}(a)--(b)). Alternatively, one can consider \emph{nonmonotone transition functions} \cite{PerRonSil2023}, which do not satisfy the monotonicity property, as illustrated in Figure~\ref{transicoes}(c).
%, or even the loss of differentiability in a finite number of points, possibiliting the enrichment the dynamics os the differential system. 
%Therefore, we are interested in more general transition functions. 
Therefore, we are interested in \emph{nonmonotone transition functions}.

\begin{defi}\label{defitrang}
	%A sufficiently piecewise differentiable function differentiable function 
%	A sufficiently piecewise smooth function $\varphi: \re \rightarrow \re $ such that 
%\begin{equation}\label{condfuncaotransi}
%	\displaystyle\lim_{x \rightarrow - \infty} \varphi(x)=-1 \; \; \; \; \; \; \mbox{and} \; \; \; \; \; \; \displaystyle\lim_{x \rightarrow + \infty} \varphi(x)=1
%\end{equation}
%will be called a transition function generalized.
A sufficiently piecewise smooth function $\varphi: \mathbb{R} \rightarrow \mathbb{R}$ satisfying
\begin{equation}\label{condfuncaotransi}
	\lim_{x \rightarrow -\infty} \varphi(x) = -1, \quad \text{and} \quad 
	\lim_{x \rightarrow +\infty} \varphi(x) = 1
\end{equation}
will be called a \emph{generalized transition function}.

\end{defi}
 \begin{figure}
 	\begin{center}
 		
 		\begin{overpic}[scale=0.4,tics=5]{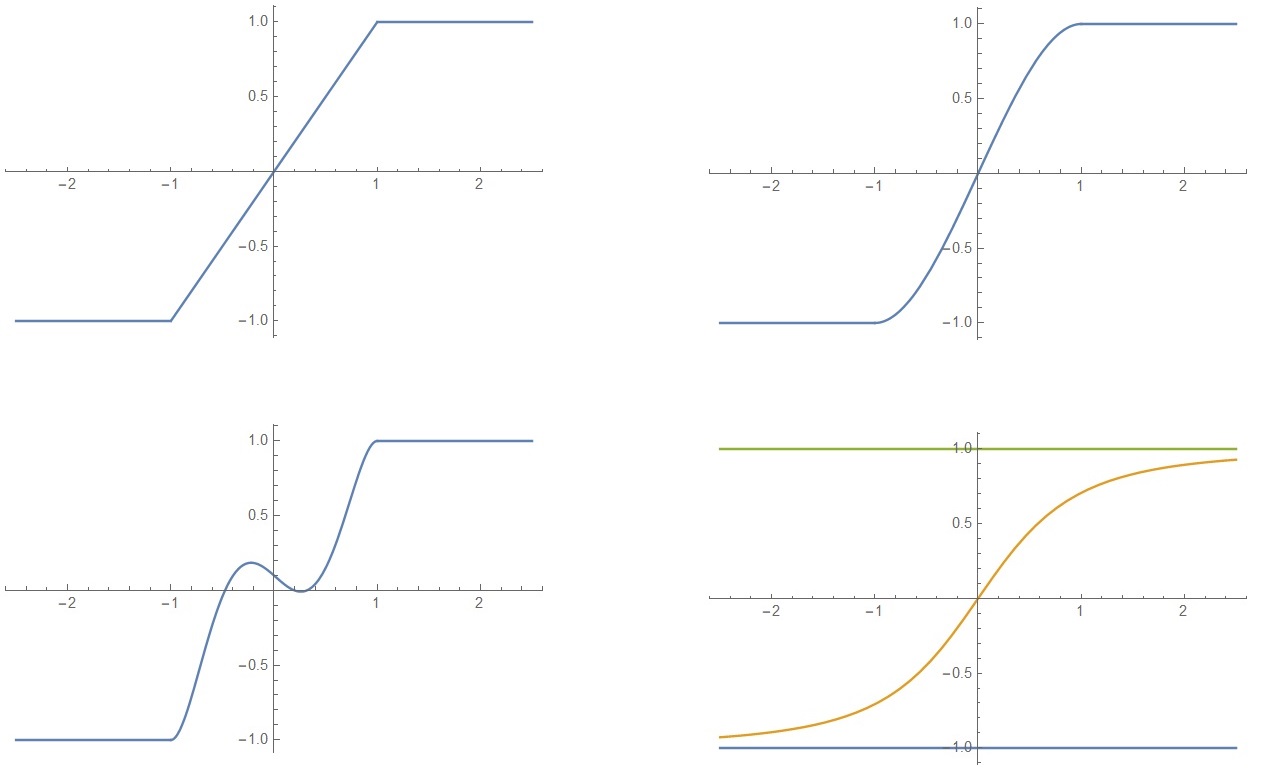}
 			\vspace{0.2cm}
 			
 			%			\put(80,342){{(A)}}

 			\put(75,116){{(a)}}
 			
 			\put(290,116){{(b)}}
 			
 			\put(75,-10){{(c)}}
 			
 			\put(290,-10){{(d)}}
 			
 			%		\put(143,103){{\parbox{0.55\linewidth}{$ 
 						%					\Sigma_1$}}}
 			%		
 			%		\put(375,103){{\parbox{0.55\linewidth}{$ 
 						%					\Sigma_1$}}}
 			%		
 			%		
 			%		
 			%		\put(200,39){{\parbox{0.55\linewidth}{$ 
 						%					\Sigma_2$}}}
 			%		
 			%		\put(430,39){{\parbox{0.55\linewidth}{$ 
 						%					\Sigma_2$}}}
 			%		

 		\end{overpic}

 		$\;$
 		
 		\caption{(a)-(b) Monotonous transition functions;
 			 (c) Nonmonotonous transition function; (d) Generalized transition function .}\label{transicoes}
 		
 	\end{center}
 	
 \end{figure}

%Following the Definition \ref{defitrang}, we have not always the exact configuration of Figure \ref{figuraregdulpa} for the Sotomayor-Teixeira double regularization.  There are situations in which we need to interpret Figure \ref{figuraregdulpa} with the vector fields $X_{ \pm, \pm}$ for points with a big norm, near infinity. These aspects became clearer later when we described some examples in Section \ref{section4}. Moreover, cases also can exist where the transition function is not monotone. That said, the Sotomayor-Teixeira double regularization \eqref{eq2par} can take values that are not in the convex combination of $X$ and $Y$, and even does not satisfy the conditions of the continuous combination of $X$ and $Y$. In other words, we need a more general definition than linear and nonlinear regularizations. For us, a continuous combination generalized of \eqref{sistemacruz} will be  a one-parameter family of smooth vector fields $\tilde{Z}_g^R(\lambda, \cdot),$ such that 
%$$	\displaystyle\lim_{\lambda \rightarrow 1}  \tilde{Z}_g^R(\lambda,p)=X(p) \; \mbox{and} \;  \displaystyle\lim_{\lambda \rightarrow -1}  \tilde{Z}_g^R(\lambda,p)=Y(p), \; \mbox{for all } \; p \in \mathcal{U}.$$
%Therefore, as before, the double regularization $Z_{\varepsilon, \eta}^R$ of \eqref{sistemacruz} will be nonlinear generalized if there exists a continuous combination generalized $\tilde{Z}_g^R$ sucht that $Z_{\varepsilon, \eta}^R \in \tilde{Z}_g^R(\lambda,p)$.

Following Definition~\ref{defitrang}, the Sotomayor-Teixeira double regularization does not always produce the exact configuration shown in Figure~\ref{figuraregdulpa}. There are situations in which Figure~\ref{figuraregdulpa} must be interpreted with the vector fields $X_{\pm,\pm}$ evaluated at points with large norm, i.e., near infinity. These aspects become clearer in the examples presented in Section~\ref{section4}. Moreover, there are cases where the transition function is not monotone. Consequently, the Sotomayor-Teixeira double regularization \eqref{eq2par} can take values outside the convex combination of $X$ and $Y$, and it may not satisfy the conditions of a continuous combination of $X$ and $Y$. In other words, a more general definition than linear or nonlinear regularizations is needed. For our purposes, a \emph{generalized continuous combination} of \eqref{sistemacruz} is a one-parameter family of smooth vector fields $\tilde{Z}_g^R(\lambda, \cdot)$ such that
\[
\lim_{\lambda \to 1} \tilde{Z}_g^R(\lambda,p) = X(p) \quad \text{and} \quad 
\lim_{\lambda \to -1} \tilde{Z}_g^R(\lambda,p) = Y(p), \quad \text{for all } p \in \mathcal{U}.
\]

%Accordingly, the double regularization $Z_{\varepsilon, \eta}^R$ of \eqref{sistemacruz} will be called a \emph{generalized nonlinear} regularization if there exists a generalized continuous combination $\tilde{Z}_g^R$ such that 
%\[
%Z_{\varepsilon, \eta}^R \in \tilde{Z}_g^R(\lambda,p).
%\]

Accordingly, the double regularization $Z_{\varepsilon, \eta}^R$ of \eqref{sistemacruz} is called a \emph{generalized nonlinear} regularization if there exists a generalized continuous combination $\tilde{Z}_g^R$ such that 
\[
Z_{\varepsilon, \eta}^R \in \tilde{Z}_g^R(\lambda,p) \quad \text{for some } \lambda \in [-1,1] \text{ and all } p \in \mathcal{U}.
\]

%a continuous combination of \eqref{sistemacruz} is a one parameter family of smooth vector fields $\tilde{Z}^R(\lambda,\cdot),$ with $\lambda \in [-1,1] $ such that $\tilde{Z}^R(1,p)=X(p)$ and $\tilde{Z}^R(-1,p)=Y(p)$, for all $p \in \mathcal{U}$. The double regularization $Z_{\varepsilon, \eta}^R$ \eqref{sistemacruz} will be called nonlinear if there exists a continuous double combination .$\tilde{Z}^R$ sucht that $Z_{\varepsilon, \eta}^R \in \{\tilde{Z}^R(\lambda_1 \, \lambda_2,p): \lambda \in [-1,1]\}$.

%observe que quando a imagem ta fora perdemos a linearidade

%t is monotonous (Figure \ref{transicoes}(d)) and otherwise it is non-monotonous (Figure \ref{transicoes}(c)). And as said before, if the transition function satisfies \eqref{proptransi}, we have a Sotomayor-Teixeira transition function, which is clearly monotonous Sotomayor, Teixeira, and their collaborators performed a double regularization process with two-time scales, in case the switching manifold has a transversal self-intersection. 

%Motivated by the works \cite{SilMezNov2022, NovJeff2015}, if $X_{+,+}=X_{-,-}=X=(X_1,X_2)$ and $X_{-,+}=X_{+,-}=Y=(Y_1,Y_2)$, then for the piecewise vector field \eqref{sistemacruz} we are going to consider the following class of nonlinear double regularization $Z^R_{\varepsilon, \eta}(p_0)=	(Z^{R_1}_{ \varepsilon , \eta }(p)	,Z^{R_2}_{ \varepsilon , \eta }(p)	)$:
Motivated by the works \cite{SilMezNov2022, NovJeff2015}, assuming 
$X_{+,+} = X_{-,-} = X = (X_1, X_2)$ and $X_{-,+} = X_{+,-} = Y = (Y_1, Y_2)$, 
for the piecewise vector field \eqref{sistemacruz}, we consider the following class of nonlinear double regularizations:
\[
Z^R_{\varepsilon, \eta}(p_0) = \big(Z^{R_1}_{\varepsilon, \eta}(p), Z^{R_2}_{\varepsilon, \eta}(p)\big).
\]

\begin{equation}\label{eq2par1}
	\begin{aligned}
		Z^R_{\varepsilon, \eta}(p) &= \displaystyle\sum_{ \alpha,\beta \in \lbrace +,- \rbrace } \frac{1}{4} \left( 1 + \alpha \,  \varphi_\varepsilon(x_1) \right) \left( 1 + \beta  \, \psi_\eta(x_2) \right) \, X_{ \alpha, \beta }(p) + \xi \, G( \varphi_\varepsilon(x_1) , \psi_\eta(x_2))=\\
			&\, \\
		&= \dfrac{1}{2}	\left( \begin{array}{ccc} 
			X_1(p) &  Y_1(p)\\
			X_2(p) &  Y_2(p)  
		\end{array} \right)
		\left( \begin{array}{l} 
		1 + \varphi_\varepsilon(x_1) \, \psi_\eta(x_2) \\
		1 - \varphi_\varepsilon(x_1) \, \psi_\eta(x_2) \\
		\end{array}\right)
		+
		\xi \,	\left( \begin{array}{l} 
		G_1( \varphi_\varepsilon(x_1) , \psi_\eta(x_2))  \\
			G_2( \varphi_\varepsilon(x_1) , \psi_\eta(x_2)) 
		\end{array}\right), \\
			\end{aligned}
	\vspace{0.3cm}
	\end{equation}
%	where $\xi$ is a sufficiently small parameter,
%	$\varphi_\varepsilon(x_1)=\varphi(x_1/\varepsilon)$ and $\psi_\eta(x_2)=\psi(x_2/\eta)$ are transition functions generalized, $G=(G_1,G_2)$
%is a continuous real function on a convenient domain
%such that $G(\pm1,\pm1)=0.$ As previously, $\varepsilon$ and $\eta$ are the {\it regularization parameters} which are sufficiently small, and respectively, for $\eta>0$ and $\varepsilon>0$, we also have two nonlinear regularizations:	
where $\xi$ is a sufficiently small parameter, 
$\varphi_\varepsilon(x_1) = \varphi(x_1 / \varepsilon)$ and $\psi_\eta(x_2) = \psi(x_2 / \eta)$ are generalized transition functions, 
and $G = (G_1, G_2)$ is a continuous real function defined on a suitable domain such that $G(\pm 1, \pm 1) = 0$. 
As before, $\varepsilon$ and $\eta$ are the \emph{regularization parameters}, which are sufficiently small. 
Moreover, for $\eta > 0$ and $\varepsilon > 0$, we also have two nonlinear regularizations:
	\begin{equation*}\label{eq1par} 
	\begin{aligned}
			& Z_{\pm, \eta}^R(p) = \displaystyle\sum_{ \beta \in \lbrace +,- \rbrace }  \frac{1}{2}\left( 1 \pm \beta  \psi_\eta(x_2) \right) X_{ \pm, \beta }(p) + \xi \, G(\pm1, \psi_\eta(x_2))\; \, \mbox{and} \\
			& Z_{\varepsilon, \pm}^R(p) = \displaystyle\sum_{ \alpha \in \lbrace +,- \rbrace }  \frac{1}{2}\left( 1 \pm  \alpha  \varphi_\varepsilon(x_1) \right) X_{ \alpha, \pm }(p) + \xi \, G( \varphi_\varepsilon(x_1),\pm 1),
	\end{aligned}
	\end{equation*}
where the switching curve of $Z_{\pm, \eta}^R$ and $Z_{\varepsilon, \pm}^R $ are, respectively, $\Sigma_2^\pm$ and $ \Sigma_1^\pm$.
%$$F(r,s)=\frac{1}{4} \, \left[r \, s + \frac{\sum_{j=1}^k \, a_j \, (2^2 \, f_j(r,s)-2^{\alpha_j} \, f_j(1,1) \, r\,s+)}{1+\sum_{j=1}^k \, a_j \,2^{\alpha_j} \, f_j(1,1)}\right] \; \mbox{with} \; f_j(1,0)+f_j(0,1)=0,$$
%and  $f_j$ is a fixed continuous $(m,n)$-quasi-homegeneous functions of degree $\alpha_j\ \in \re^+$ for each $j$, that is, given $m,n,\alpha_i \in \na$ we have $f_i(t^m \, r,t^n \, s)=t^{\alpha_i} \, f(r,s)$ for all $(r,s)\in \re^2$ and all $0\geq t \in \re$. The quasi-homogeneous functions are a natural generalization of homogeneous functions. 
% polynomial given by
%$F(r,s)=\sum_{i=1}^n a_i \, r^i \, s^i/\sum_{i=1}^n 4^i \, a_i$ with $\sum_{i=1}^n 4^i \, a_i  \neq 0$.
%Estou tentando definir F de modo que ela seja uma perturbacao da dupla reg
%Observe that when $a_j=0$ for all $j$ then we have the expression \eqref{eq2par}. 
%However, if $a_j\neq 0$ for some $j$ then the expression \eqref{eq2par1} can be seen as a some kind perturbation of \eqref{eq2par}.
% In addtion, considering $p=(x_1,x_2)$ then for each $x_1 \neq 0$ if $\varepsilon \rightarrow 0$ then $x_1/\varepsilon \rightarrow \sgn(x_1) \cdot \infty$, that is, $\varphi(x_1/\varepsilon) \rightarrow \sgn(x_1)$. And, the same happens for each $x_2 \neq 0$ and $\psi(x_2/\eta)$. Therefore, the nonlinear double regularization \eqref{eq2par1} is such that 
%$Z^R_{\varepsilon, \eta} \rightarrow X_{\pm,\pm} \in \{X,Y\}$, when $\varepsilon, \eta \rightarrow 0$.
In addition, considering $p=(x_1,x_2)$, for each $x_1 \neq 0$, as $\varepsilon \rightarrow 0$ we have $x_1/\varepsilon \rightarrow \sgn(x_1) \cdot \infty$, which implies $\varphi(x_1/\varepsilon) \rightarrow \sgn(x_1)$. Similarly, for each $x_2 \neq 0$, $\psi(x_2/\eta) \rightarrow \sgn(x_2)$ as $\eta \rightarrow 0$. Therefore, the nonlinear double regularization \eqref{eq2par1} satisfies 
$Z^R_{\varepsilon, \eta} \rightarrow X_{\pm,\pm} \in \{X,Y\}$ as $\varepsilon, \eta \rightarrow 0$.
% Remember that a quasi-homogeneous function $f: U \subset \re^2 \rightarrow \re $ has the following property:  $f(0,0)=0$.
%For instance, simple examples of the nonlinear double regularization \eqref{eq2par1} can be constructed considering $G_1$ and $G_2$ polynomials of degree $m_1$ and $m_2$, respectively. As we will see, our result about the existence of equilibrium points also can be applied to homogeneous functions given by
For instance, simple examples of the nonlinear double regularization \eqref{eq2par1} can be constructed by considering $G_1$ and $G_2$ as polynomials of degrees $m_1$ and $m_2$, respectively. As we will see, our results on the existence of equilibrium points can also be applied to homogeneous functions of the form
\begin{equation}\label{eqexemplosexpoente}
	\begin{aligned}
		&G_i(r,s)=\sum_{j=0}^{m_i}a_{j}  \, |r|^{\alpha_i} \,  |s|^{\beta_i}, \; \quad \mbox{or} \quad
		G_i(r,s)=\sum_{j=0}^{m_i}a_{j} \, sgn(r) \, |r|^{\alpha_i} \, sgn(s) \,  |s|^{\beta_i},
	\end{aligned}
\end{equation}
%\begin{equation}\label{eqexemplosexpoente}
%	\begin{aligned}
%		&G_i(r,s)=\sum_{j=0}^{m_i}a_{m_i-j,j} \, (sgn(r) \, r^{\alpha_i})^{m_i-j} \, (sgn(s) \, s^{\beta_i})^{j} , \; \; \mbox{or} \\
%		& G_i(r,s)= \sum_{j=0}^{m_i}a_{m_i-j,j} \, ( - r^{\alpha_i})^{m_i-j} \, (- s^{\beta_i})^{j} ,
%	\end{aligned}
%\end{equation}
%where  $m_i \in \na$ and $0 \leq \alpha_i,\beta_i<1$ for $i=1,2$, see Section \ref{section4}.
where $m_i \in \mathbb{N}$ and $0 \le \alpha_i, \beta_i < 1$ for $i = 1,2$, see Section~\ref{section4}.

%For $m_1=2$ and $m_2=3$ if we consider \eqref{eqexemplosexpoente} then $G$ will be given by the following equations
%\begin{equation*}
%	\begin{aligned}
%		&G(r,s)=(a_{2,0}(-r^(2 \alpha_1) Sign[r]^2 + Abs[s]^(2 \[Beta][j]) Sign[s]^2)) , \; \; \mbox{or} \\
%		& G_i(r,s)= \sum_{j=0}^{m_i}a_{m_i-j,j} \, ( - r^{\alpha_i})^{m_i-j} \, (- s^{\beta_i})^{j} ,
%	\end{aligned}
%\end{equation*}

%$\;$
%
%
%non-smooth Xj where our approach can be used are
%colocar  o exemplo polinomial que fiz e o com raiz do meu artigo de homogeneous,
%
%$\;$

%G pode ser uma comb linear de funcoes homegeneas onde tem caso polinomial e o caso nao suave
% fazer um caso que tem os termos homogeneos de grau 2 por exemplo igual fiz no papel

%The main goal of this paper is to analyze the bifurcations of low codimension classified in \cite{LarSeaTei2021}, a class of piecewise smooth vector fields with nonregular switching curve, under the smoothing process \eqref{eq2par1} in a neighborhood at the origin. The origin is where the switching curve loses its regularity. We would like to check the phenomena that may occur in the regularization chosen in this neighborhood. Furthermore, if we consider other neighborhoods then we already have cases described and cited in the literature. Using the smoothing process \eqref{eq2par1} we prove that the codimension of the bifurcations, in the world of smooth systems,  is not always preserved or is not always generic. As far as we know, there are no results in this direction.

The main goal of this paper is to analyze the bifurcations of low codimension, classified in \cite{LarSeaTei2021}, in a class of piecewise-smooth vector fields with a nonregular switching curve, under the smoothing process \eqref{eq2par1} in a neighborhood of the origin. The origin is the point where the switching curve loses its regularity. We aim to investigate the phenomena that may occur under the chosen regularization in this neighborhood. Furthermore, for other neighborhoods, the corresponding cases have already been described and cited in the literature. Using the smoothing process \eqref{eq2par1}, we show that the codimension of the bifurcations in the world of smooth system is not always preserved, nor is it always generic. To the best of our knowledge, there are no results in this direction.

%In this way, we can announce one of the main results of this article. Considering $det [ Z](p) = (X_1 \, Y_2-X_2 \, Y_1)(p)$ and $H_i(x_1,x_2)=\varphi_\varepsilon(x_{1}) \, \psi_ \eta(x_{2}) \, (X_i - Y_i)(x_{1},x_{2}) + (X_i + Y_i)(x_{1},x_{2})$  for $i=1,2$, then firstly, we take into account conditions about the existence of equilibrium points for the nonlinear double regularization.

In this way, we can state one of the main results of this article. Let $det [ Z](p) = (X_1 \, Y_2-X_2 \, Y_1)(p)$ and $H_i(x_1,x_2)=\varphi_\varepsilon(x_{1}) \, \psi_ \eta(x_{2}) \, (X_i - Y_i)(x_{1},x_{2}) + (X_i + Y_i)(x_{1},x_{2})$  for $i=1,2$.
First, we consider conditions for the existence of equilibrium points for the nonlinear double regularization.

\begin{teo}\label{teonep}
	%Consider a piecewise smooth vector field 
%		 Consider $Z \in \Omega$ \eqref{sistemacruz}, then its nonlinear double regularization $Z^R_{\varepsilon, \eta}$  has an equilibrium point if and only if there is $p_0=( p_{10},p_{20}) \in \mathcal{U}$ such that $det [Z](p_0) =0$ and $H_i(p_0)
%	= 0$ for some $i =1,2.$  If $\xi \neq 0$ then $G( \varphi_\varepsilon(p_{10}) , \psi_\eta(p_{20})) = (0,0)$ for every $\xi$ sufficiently small.
	Consider $Z \in \Omega$ as in \eqref{sistemacruz}. Then, its nonlinear double regularization $Z^R_{\varepsilon, \eta}$ has an equilibrium point if and only if there exists $p_0 = (p_{10}, p_{20}) \in \mathcal{U}$ such that $\det[Z](p_0) = 0$ and $H_i(p_0) = 0$ for some $i = 1,2$. Moreover, if $\xi \neq 0$, then 
$
	G\big(\varphi_\varepsilon(p_{10}), \psi_\eta(p_{20})\big) = (0,0)
$
	for every $\xi$ sufficiently small.
	
%	
%	
%	if $G( \varphi_\varepsilon(p_{10}) , \psi_\eta(p_{20})) \neq (0,0)$ and for any continuous curve $\gamma(t)=(\gamma_1(t),\gamma_2(t))$ passing through $p_0  \in \mathcal{U} $ in $t=t_0$ satisfies
%	$$H_1(\gamma_1(t),\gamma_2(t))\, G_2( \varphi_\varepsilon(\gamma_1(t)) , \psi_\eta(\gamma_2(t)) ) -H_2(\gamma_1(t),\gamma_2(t)) \, G_1( \varphi_\varepsilon(\gamma_1(t)), \psi_\eta(\gamma_2(t)) ) \neq 0, $$ 
%	for $t \neq t_0$ in a neigborhood of $p_0$	 then the equilibrium point exists for $\xi=0$, otherwise an equilibrium exists for every $\xi$ sufficiently small.
% Consider $Z \in \Omega$ \eqref{sistemacruz}, then its nonlinear double regularization $Z^R_{\varepsilon, \eta}$ \eqref{eq2par1} has an equilibrium point in $p_0=( p_{10},p_{20}) \in \mathcal{U}$  if and only $det Z(p_0) =0$ and $H_i(p_0)=\varphi_\varepsilon(p_{10}) \, \psi_ \eta(p_{20}) \, (X_i - Y_i)(p_{10},p_{20}) + (X_i + Y_i)(p_{10},p_{20})= 0$ for some $i =1,2.$ Moreover if $G( \varphi_\varepsilon(p_{10}) , \psi_\eta(p_{20})) \neq (0,0)$ and for any continuous curve $\gamma(t)=(\gamma_1(t),\gamma_2(t))$ passing through $p_0  \in \mathcal{U} $ satisfies
%$$H_1(\gamma_1(t),\gamma_2(t))\, G_2( \varphi_\varepsilon(\gamma_1(t)) , \psi_\eta(\gamma_2(t)) ) -H_2(\gamma_1(t),\gamma_2(t)) \, G_1( \varphi_\varepsilon(\gamma_1(t)), \psi_\eta(\gamma_2(t)) ) \neq 0, $$ 
%then the equilibrium point exists only for $\xi=0$, otherwise an equilibrium exists for every $\xi$ sufficiently small.
\end{teo}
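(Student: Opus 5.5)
We write the equilibrium condition for $Z^R_{\varepsilon,\eta}$ in the matrix form \eqref{eq2par1}. Set $\lambda(p)=\varphi_\varepsilon(x_1)\psi_\eta(x_2)$ and $M(p)$ for the $2\times 2$ matrix with columns $X(p)$ and $Y(p)$, so that $\det M(p)=\det[Z](p)$. Then $Z^R_{\varepsilon,\eta}(p_0)=0$ is equivalent to
\[
\tfrac12 M(p_0)\bigl(1+\lambda(p_0),\,1-\lambda(p_0)\bigr)^{T}=-\xi\,G\bigl(\varphi_\varepsilon(p_{10}),\psi_\eta(p_{20})\bigr).
\]
Expanding the left side componentwise gives exactly $\tfrac12 H_i(p_0)$, since $(1+\lambda)X_i+(1-\lambda)Y_i=\lambda(X_i-Y_i)+(X_i+Y_i)$. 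So the equilibrium equations read
\[
H_i(p_0)=-2\xi\,G_i\bigl(\varphi_\varepsilon(p_{10}),\psi_\eta(p_{20})\bigr),\qquad i=1,2.
\]
The plan is to treat the unperturbed part ($\xi=0$ or $G=0$) via linear algebra, and then to force $G=0$ at the equilibrium when $\xi\neq0$ by varying $\xi$.

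\emph{Step 1 ($\xi$-independent part).} With the right-hand side zero, $M(p_0)v=0$ for $v=(1+\lambda,1-\lambda)^T$. This vector is never zero, because its entries sum to $2$. Hence a nontrivial kernel exists, which forces $\det[Z](p_0)=0$, and then $H_1(p_0)=H_2(p_0)=0$. In particular some $H_i$ vanishes.

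Conversely, suppose $\det[Z](p_0)=0$ and $H_i(p_0)=0$ for one $i$. The rows of $M(p_0)$ are then linearly dependent. If the $i$-th row is nonzero, the other row is a multiple of it, so $H_i(p_0)=0$ forces the other $H_j(p_0)=0$ as well. If the $i$-th row vanishes identically, then $H_i$ is automatically zero, and one must still get $H_j=0$ for the other row. I would handle this by reading the statement as saying that $i$ can be chosen with a nonzero row, and otherwise check it directly. Either way $M(p_0)v=0$, so $p_0$ is an equilibrium. This gives the ``if and only if'' when $G$ vanishes at the point.

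\emph{Step 2 (the role of $\xi$).} The claim is that the equilibrium is present for every sufficiently small $\xi$ with the same $p_0$. Then the identity $H_i(p_0)=-2\xi G_i(\cdot)$ holds for a continuum of values of $\xi$ with the left side independent of $\xi$. Subtracting the identity at two distinct values $\xi\neq\xi'$ gives $(\xi-\xi')G_i=0$, so $G(\varphi_\varepsilon(p_{10}),\psi_\eta(p_{20}))=(0,0)$. Substituting back gives $H_i(p_0)=0$, and Step 1 then yields $\det[Z](p_0)=0$.

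Conversely, if $G$ vanishes at $(\varphi_\varepsilon(p_{10}),\psi_\eta(p_{20}))$ and Step 1's conditions hold, then $p_0$ is an equilibrium for every $\xi$. The hypothesis $G(\pm1,\pm1)=0$ is consistent with this. It is the situation away from the strip $|x_1|<\varepsilon$, $|x_2|<\eta$, where $\varphi_\varepsilon$ and $\psi_\eta$ equal $\pm1$.

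\emph{Main obstacle.} The delicate point is the quantifier structure. For a single fixed $\xi\neq0$, equilibria can in principle occur at points where $G\neq0$ and $H_i=-2\xi G_i\neq0$. These points move with $\xi$, so the theorem must be read as concerning equilibria persisting at a fixed $p_0$ for all small $\xi$; that is the interpretation under which Step 2 goes through. A secondary issue is the degenerate case of Step 1, where a whole row of $M(p_0)$ vanishes, which I would dispose of by the direct check described there.
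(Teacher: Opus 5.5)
Your argument is correct and is essentially the paper's: for $\xi=0$ the nonzero vector $(1+\lambda,1-\lambda)^T$ in the kernel of $M(p_0)$ forces $\det[Z](p_0)=0$, and proportional rows reduce the system to a single $H_i$. For $\xi\neq0$ the paper evaluates $F_i=H_i+\xi G_i$ at the fixed point $p_0$, which is the same fixed-$p_0$ reading of the ``moreover'' clause that you adopt; your subtraction at two values of $\xi$ is just a cleaner way of saying it.

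One correction: the degenerate case cannot be ``checked directly'', because there the converse is false as stated. For example, $X\equiv Y\equiv(0,1)$ gives $\det[Z]\equiv0$ and $H_1\equiv0$, yet $Z^R_{\varepsilon,\eta}\equiv(0,1)$ for $\xi=0$. So the condition must be $H_1(p_0)=H_2(p_0)=0$, or $i$ must be chosen with $(X_i,Y_i)(p_0)\neq(0,0)$, exactly as in your proposed reading. The paper's ``it suffices to consider only one of the equations'' silently makes the same choice.
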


%	As we will see, the proof of the last theorem also shows us that if $G_i \, G_j \neq 0$ and $H_i \, G_j - H_j \, G_i \neq 0$ then the equilibrium only exists for $\xi=0$, because $\xi=-H_i/G_i$ for $i=1,2$. Otherwise, there is an equilibrium for every $\xi$ in a suitable neighborhood.
	
	%Como veremos o ultimo teorema nos mostra que se temos um equilíbrio entao obrigatoriamente teremos $\xi=0$. 

%The last theorem shows us important conditions about the existence of equilibrium points for our nonlinear double regularization.
The last theorem provides important conditions for the existence of equilibrium points in our nonlinear double regularization.
%As we will see in its proof if $((G_i) ^2 +  (G_j )^2)( \varphi_\varepsilon(p_{01}), \psi_\eta(p_{01}) )\neq 0$ and $(H_i \cdot G_j - H_j \cdot G_i) ( x_1,x_2) \neq 0$ in some small neighborhood of $p_0$ then the equilibrium only exists for $\xi=0$, because the sufficiently small parameter must be given by $\xi=-(H_i/G_i)(x_1,x_2)$ for some $i=1,2$. Otherwise, there is an equilibrium for every $\xi$ in a suitable neighborhood.
	%The last theorem shows us important conditions for the existence of equilibrium points. As we will see, the proof of the last theorem also shows us that if $G_i \, G_j ( \varphi_\varepsilon(p_{01}), \psi_\eta(p_{02}))\neq 0$ and $H_i \, G_j - H_j \, G_i \neq 0$ then the equilibrium only exists for $\xi=0$, because $\xi=-H_i/G_i$ for $i=1,2$. Otherwise, there is an equilibrium for every $\xi$ in a suitable neighborhood.
%Thus, considering the nonlinear double regularization \eqref{eq2par1} 
%of the smooth piecewise system \eqref{sistemacruz}, 
%when the above conditions are not satisfied, 
%this implies the nonexistence of equilibrium points, 
%which, in a certain sense, leads to an analysis of structural stability. 
%With this in mind, let $G$ be a sufficiently smooth real function, 
%and consider the following classes for $Z=(X,Y)$:
Thus, considering the nonlinear double regularization \eqref{eq2par1} 
of the smooth piecewise system \eqref{sistemacruz}, 
when the above conditions are not satisfied, 
there are no equilibrium points. 
This, in a certain sense, motivates an analysis of structural stability. 
With this in mind, let $G$ be a sufficiently smooth real function, 
and consider the following classes for $Z = (X,Y)$:
\begin{itemize}
	\item[($A_0$)] $X_i \, Y_i(0) > 0$ for $i = 1, 2$;
	
	\item[($B_0$)] $X_i \, Y_i(0) < 0$ for $i = 1, 2$, and $\det Z(0) \neq 0$;
	
	\item[($C_0$)] $X_i \, Y_i(0) > 0$, $X_j \, Y_j(0) < 0$ for $i,j = 1,2$, $i \neq j$, and
	moreover, if $X_1 \, X_2(0) < 0$, then $\big(X_1 \, Y_2 / X_2 \, Y_1\big)^2(0) \neq 1$.
\end{itemize}
As proved in \cite{LarSeaTei2021}, the $\Sigma$-structurally stable set in $\Omega$ corresponds to $\Omega_0=$ ($A_0$)$\cup$($B_0$)$\cup$($C_0$) (see Section~\ref{section2})
We then obtain the following result.

%\begin{teo}	\label{teo1}
%	If $Z$ is $\Sigma$-structurally stable in $\Omega$, that is, $Z \in \Omega_0$ 
%	and the trasition functions satisfies $\varphi \, \psi(0) \in (-1,1)$ then there are real numbers $\varepsilon_0>0$ and $\eta_0>0$ such that
%	%its Sotomayor-Teixeira double regularization generalized  $Z_{\varepsilon, \eta}^R$$
%	its nonlinear double regularization $Z^R_{\varepsilon, \eta}$ \eqref{eq2par1} is structurally stable near to the origin for $(\varepsilon,\eta) \in (0,\varepsilon_0) \times (0,\eta_0)$ and any $\xi$ sufficiently small.
%	%and class $G$. 
%	In this case, the structural stability is preserved.
%\end{teo}

\begin{teo} \label{teo1}
	If $Z$ is $\Sigma$-structurally stable in $\Omega$, that is, if $Z \in \Omega_0$, 
	then there exist real numbers $\varepsilon_0 > 0$ and $\eta_0 > 0$ such that 
	its nonlinear double regularization $Z^R_{\varepsilon, \eta}$ \eqref{eq2par1}
	is structurally stable near the origin for 
	$(\varepsilon, \eta) \in (0, \varepsilon_0) \times (0, \eta_0)$, 
	and for any sufficiently small $\xi$, provided that one of the following conditions holds:
	\begin{itemize}
		\item[(i)] $Z$ satisfies $(B_0)$ or $(C_0)$;
		\item[(ii)] $Z$ satisfies $(A_0)$, and the transition functions satisfy $\varphi \, \psi(0) \in [-1,1]$.
	\end{itemize}
	In this case, the structural stability is preserved.
\end{teo}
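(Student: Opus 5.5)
The plan is to show that, near the origin and for the stated parameter ranges, $Z^R_{\varepsilon,\eta}$ has no equilibrium points. A planar smooth vector field without singularities in a small neighbourhood is a flow box there, hence locally structurally stable. Periodic orbits and separatrix connections are not the issue in a sufficiently small neighbourhood of the origin, since a regular flow box has neither.

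By Theorem~\ref{teonep}, it suffices to rule out points $p_0$ near the origin where $\det[Z](p_0)=0$ and some $H_i(p_0)=0$. We must also control the term $\xi G$ uniformly for small $\xi$. For this we rewrite
\[
Z^R_{\varepsilon,\eta}(p)=\tfrac12\big(H_1(p),H_2(p)\big)+\xi\, G(\varphi_\varepsilon(x_1),\psi_\eta(x_2)).
\]
Set $\lambda=\varphi_\varepsilon(x_1)\psi_\eta(x_2)$. Then $\tfrac12 H$ is the combination $\tfrac12(1+\lambda)X+\tfrac12(1-\lambda)Y$. We will show that $|H|$ is bounded below by a positive constant on a fixed neighbourhood $V$ of the origin. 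This bound must hold uniformly in $(\varepsilon,\eta)$ and in whatever values $\lambda$ actually takes there. Once it holds, $\xi G$, which is bounded on the compact range of $(\varphi,\psi)$, cannot cancel $H$ when $|\xi|$ is small.

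\textbf{Case (i).} Under $(B_0)$ we have $\det Z(0)\neq0$, so $X(0)$ and $Y(0)$ are linearly independent. Hence $(1+\lambda)X(0)+(1-\lambda)Y(0)\neq0$ for every real $\lambda$, and by continuity $|H|\ge c>0$ on $V$ for $\lambda$ in any bounded set. The values $\lambda$ lie in a bounded set because the generalized transition functions are bounded.

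Under $(C_0)$, suppose $X_iY_i(0)>0$ for the index $i$. Then $(1+\lambda)X_i+(1-\lambda)Y_i$ vanishes only at $\lambda=(Y_i+X_i)/(Y_i-X_i)$, and this value has modulus greater than $1$. The component $j$ has $X_jY_j(0)<0$, so it vanishes only at some $\lambda$ with $|\lambda|<1$. If $\det Z(0)\neq0$ the two components cannot vanish simultaneously, and the argument is as in $(B_0)$. If $\det Z(0)=0$, then $X(0)$ and $Y(0)$ are parallel; since $X_iY_i(0)>0$ they point in the same direction, which forces $X_jY_j(0)>0$, a contradiction. So $\det Z(0)\neq0$ automatically, and the nondegeneracy condition $(X_1Y_2/X_2Y_1)^2(0)\neq1$ is only needed for the $\Sigma$-equivalence side and is not used here.

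\textbf{Case (ii).} Under $(A_0)$, each component of $H$ vanishes only at a $\lambda$ with $|\lambda|>1$. So for $\lambda\in[-1,1]$ the vector $H(0)$ is a nonzero convex combination of two vectors lying strictly in the same open quadrant. The key point, and the expected obstacle, is controlling the range of $\lambda$ near the origin. For generalized (possibly nonmonotone) transition functions, $\varphi_\varepsilon(x_1)\psi_\eta(x_2)$ can leave $[-1,1]$ in the boundary layer, and there $H$ can vanish. This is why the hypothesis $\varphi\psi(0)\in[-1,1]$ is imposed.

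I would handle this by restricting to a neighbourhood of the origin, shrinking with $(\varepsilon,\eta)$, on which $\lambda$ stays within a compact subset of the open interval $(\lambda_-,\lambda_+)$ that avoids the zeros of $H_1$ and $H_2$. Such a neighbourhood exists by continuity of $\varphi$ and $\psi$ at $0$, together with the strict inequalities $|\lambda_\pm|>1\ge|\varphi\psi(0)|$. On it, $|H|\ge c>0$.

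The delicate step is making ``near the origin'' uniform. I would phrase structural stability near the origin as holding on a neighbourhood $V_{\varepsilon,\eta}$ of the origin, with $c$ independent of $(\varepsilon,\eta)$, and then choose $\varepsilon_0$ and $\eta_0$ so that the continuity estimates of $X$ and $Y$ at $0$ apply there. In both cases, choosing $|\xi|<c/\sup|G|$ completes the argument.
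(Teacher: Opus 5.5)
Your proposal is correct and follows essentially the same route as the paper. You exclude equilibria near the origin via $\det Z(0)\neq 0$ in cases $(B_0)$ and $(C_0)$, and in case $(A_0)$ via the fact that the zero of $H_i(0)$, viewed as a function of $\lambda=\varphi\psi(0)$, lies outside $[-1,1]$; you then conclude with the Flow Box Theorem. Your uniform lower bound on $|H|$ (on a neighbourhood shrinking with $(\varepsilon,\eta)$ in case $(A_0)$) and your explicit smallness condition on $\xi$ make rigorous what the paper's proof only asserts by continuity.
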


%As a consequence, we can see that the Sotomayor-Teixeira transition functions guarantee the preservation of structural stability. Recall that Sotomayor-Teixeira transition functions have images given by $sgn(u)$ to $|u| \geq 1$ and have a positive derivative for $|u|<1$, that is, we have $\varphi \, \psi(0) \in (-1,1)$. As we will see in the proof of Theorem \ref{teo1}, the condition imposed for the transition functions is only important in one of the subclasses of structurally stable systems. In the other subclasses, we can remove this condition, which makes the result even more general for these subclasses. On the other hand, for more general transition functions, that is, allowing $\varphi \, \psi(0) \notin [-1,1]$, they lead to the possibility of the appearance of an equilibrium point at the origin for the reminiscent subclass.

As a consequence, we see that the Sotomayor-Teixeira transition functions guarantee the preservation of structural stability. Recall that the Sotomayor-Teixeira transition functions map $u$ to $\mathrm{sgn}(u)$ for $|u| \geq 1$ and have a positive derivative for $|u| < 1$, that is, $\varphi \, \psi(0) \in (-1,1)$. As we will see in the proof of Theorem \ref{teo1}, the condition imposed on the transition functions is only relevant for one of the subclasses of structurally stable systems. In the other subclasses, this condition can be removed, making the result even more general for these cases. On the other hand, for more general transition functions, that is, allowing $\varphi \, \psi(0) \notin [-1,1]$, there arises the possibility of the appearance of an equilibrium point at the origin in the corresponding subclass.
%If the equilibrium point is hyperbolic the result is still valid using the Hartman-Grobman Theorem, but if the equilibrium point is nonhyperbolic then the result is invalid and the codimension is not preserved. In section \ref{} we will show an example of a case in which the origin is a non-hyperbolic equilibrium point of the nonlinear double regularization of a structurally stable vector field Z ($Z \in \Omega_0$).
%Suppose that there is some smoothness on function $G$. 
%In that case,
%if the equilibrium point is hyperbolic then the result is still valid using the Hartman-Grobman Theorem. However, if the equilibrium point is nonhyperbolic, the result is invalid and the codimension is not preserved. In Section \ref{section4} we will show examples in which the origin is a non-hyperbolic equilibrium point of the nonlinear double regularization of a structurally stable vector field $Z \in \Omega_0$ and more details will be provided in section \ref{sectioneqorigem}
In that case, 
if the equilibrium point is hyperbolic, the result remains valid by the Hartman-Grobman Theorem. 
However, if the equilibrium point is nonhyperbolic, the result does not hold, and the codimension is not preserved. 
In Section \ref{section4}, we present examples in which the origin is a nonhyperbolic equilibrium point of the nonlinear double regularization of a structurally stable vector field $Z \in \Omega_0$. 
Further details are provided in Section \ref{sectioneqorigem}.

%Once we have studied the set $\Omega$ via a nonlinear double regularization process then now, our attention will be given in the same way to the piecewise smooth vector fields having a generic bifurcation of codimension one at the origin, $Z \in \Xi_1$, and consider the following classes for $Z=(X,Y)$:
%\begin{itemize}
%		\item[($A_1$)] 	$X_i(0)=0, \; X_j(0) \frac{\partial }{\partial x_j} X_i(0) \neq 0$ and $Y_i(0) \neq 0$ for $i=1,2$, or $Y_i(0)=0$, $Y_j(p) \frac{\partial }{\partial x_j} Y_i(0) \neq 0$ and $X_i(0) \neq 0$ for $i=1,2$;
%	
%	\item[($B_1$)] $X_i \, Y_i(0) <0$, $det Z(0)=0$ and $(det Z)_ {x_i}(0)$ $ \neq 0$ for $i=1,2$;
%	
%	\item[($C_1$)] $X_i \, Y_i(0) >0$, $X_j \, Y_j(0) <0$ for $i,j = 1,2, \; i \neq j$ and  $X_1 \, X_2(0)<0$. Moreover, if $X_1 \, X_2(0) < 0$ and $(X_1 \, Y_2/X_2 \, Y_1) (0)=- 1$ then $\beta_Z \neq 0$ and $\eta_Z \neq 0$ such that
%		\begin{equation*}
%		\phi_Z(x) = \alpha_Z ^2 x + (\alpha_Z + \alpha_Z ^2) \beta_Z x^2 + \eta_Z x^3 + \mathcal{O}(x^4), \; x \in \Sigma_2^-
%	\end{equation*}
%is the {\it first return map} associated of $Z$. 
%\end{itemize}

Once we have studied the set $\Omega$ via a nonlinear double regularization process, 
our attention will now be given, in a similar way, to the piecewise-smooth vector fields exhibiting a generic bifurcation of codimension one at the origin, $Z \in \Xi_1$. 
We consider the following classes for $Z = (X,Y)$:
\begin{itemize}
	\item[($A_1$)] $X_i(0) = 0$, $X_j  (X_i)_{x_j} (0) \neq 0$, and $Y_i(0) \neq 0$ for $i=1,2$, 
	or $Y_i(0) = 0$, $Y_j (Y_i)_{x_j} (0) \neq 0$, and $X_i(0) \neq 0$ for $i=1,2$;
	
	\item[($B_1$)] $X_i \, Y_i(0) < 0$, $\det Z(0) = 0$, and $(\det Z)_{x_i}(0) \neq 0$ for $i=1,2$;
	
	\item[($C_1$)] $X_i \, Y_i(0) > 0$, $X_j \, Y_j(0) < 0$ for $i,j = 1,2$, $i \neq j$, and $X_1 \, X_2(0) < 0$. 
	Moreover, if $X_1 \, X_2(0) < 0$ and $(X_1 \, Y_2 / X_2 \, Y_1)(0) = -1$, then there exist $\beta_Z \neq 0$ and $\eta_Z \neq 0$ such that
	\begin{equation*}
		\phi_Z(x) = \alpha_Z^2 x + (\alpha_Z + \alpha_Z^2) \beta_Z x^2 + \eta_Z x^3 + \mathcal{O}(x^4), \quad x \in \Sigma_2^-,
	\end{equation*}
	is the {\it first return map} associated with $Z$.
\end{itemize}
%then in \cite{LarSeaTei2021} the authors proved that the {\it firs return map} associated of $Z$ can be described as.
	%veand suppose the following expression for Poincaré map of fields $X$ and $Y$:
	%\begin{equation}\label{poincareX}
	%	\phi_X(x) = a_X x + b_X x^2 + c_X x^3 + \of(x^4) ,
	%\end{equation}
	%\begin{equation}\label{poincareY}
	%	\phi_Y(y) = a_Y y + b_Y y^2 + c_Y y^3 + \of(y^4).
	%\end{equation}
	%Therefore by the composition in the proper way of \eqref{poincareX} and \eqref{poincareY} we obtain Poincaré map of $\phi_Z$ given by
More details can be found in Section~\ref{section2}, based on \cite{LarSeaTei2021}, where the classification of piecewise vector fields in $\Omega \setminus \Omega_0$ with a generic codimension-one bifurcation at the origin is given by the set $\Xi_1=$($A_1$)$\cup$($B_1$)$\cup$($C_1$). Thus, we can state the following result.

%\begin{teo}	\label{teo2}
%	If $Z$ has a generic bifurcation of codimension one at origin, that is, $Z \in \Xi_1$ 
%then there exist real numbers $\varepsilon_0 > 0$ and $\eta_0 > 0$ such that 
%its nonlinear double regularization $Z^R_{\varepsilon, \eta}$ \eqref{eq2par1}
%is structurally stable near the origin for 
%$(\varepsilon, \eta) \in (0, \varepsilon_0) \times (0, \eta_0)$ 
%and for any sufficiently small $\xi$, provided that one of the following conditions holds:
%\begin{itemize}
%	\item[(i)] $Z$ satisfies $(A_1)$ or $(C_1)$;
%	\item[(ii)] $Z$ satisfies $(B_1)$ and the transition functions satisfy $\varphi \, \psi(0) \notin [-1,1]$.
%\end{itemize}
% In this case, the codimension one  is not preserved.
%\end{teo}
\begin{teo} \label{teo2}
	If $Z$ has a generic codimension-one bifurcation at the origin, that is, $Z \in \Xi_1$, 
	then there exist real numbers $\varepsilon_0 > 0$ and $\eta_0 > 0$ such that 
	its nonlinear double regularization $Z^R_{\varepsilon, \eta}$ \eqref{eq2par1}
	is structurally stable near the origin for 
	$(\varepsilon, \eta) \in (0, \varepsilon_0) \times (0, \eta_0)$ 
	and for any sufficiently small $\xi$, provided that one of the following conditions holds:
	\begin{itemize}
		\item[(i)] $Z$ satisfies $(A_1)$ or $(C_1)$;
		\item[(ii)] $Z$ satisfies $(B_1)$, and the transition functions satisfy $\varphi \, \psi(0) \notin [-1,1]$.
	\end{itemize}
	In this case, the codimension-one property is not preserved.
\end{teo}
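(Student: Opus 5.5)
The key observation is that, near the origin, the regularized field $Z^R_{\varepsilon,\eta}$ is a smooth planar vector field. Its local structural stability therefore reduces, by the Andronov--Pontryagin/Peixoto criterion, to one of two situations. Either there is no equilibrium in a fixed small neighbourhood $V$ of the origin, so that $Z^R_{\varepsilon,\eta}$ is flow-box equivalent there. Or every equilibrium in $V$ is hyperbolic, with no saddle connections and no nonhyperbolic periodic orbits inside $V$. The plan is to check this class by class, using Theorem~\ref{teonep} to locate equilibria. By Theorem~\ref{teonep}, an equilibrium $p_0$ requires $\det[Z](p_0)=0$ together with $H_i(p_0)=0$ for some $i$, and, when $\xi\neq 0$, also $G(\varphi_\varepsilon(p_{10}),\psi_\eta(p_{20}))=0$. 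We will also use that $Z^R_{\varepsilon,\eta}$ differs from $\tfrac12(1+\lambda)X+\tfrac12(1-\lambda)Y$, with $\lambda=\varphi_\varepsilon\psi_\eta$, only by $\xi G$. So for $\xi$ small it suffices to control the family $\lambda\mapsto \tfrac12(1+\lambda)X(p)+\tfrac12(1-\lambda)Y(p)$, $\lambda\in \varphi\psi(\mathbb{R}^2)$, uniformly in $p\in V$, and then absorb the $\xi G$ term by continuity.

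\emph{Class $(A_1)$.} Say $X_i(0)=0$ and $Y_i(0)\neq 0$. Since the tangency is generic, $X_j(0)\neq 0$. Moreover $Y_j(0)$ is either nonzero with the same sign as $X_j(0)$, or of the type making $\det[Z](0)\neq 0$. In the first case the $j$-th component of any combination with $\lambda\in[-1,1]$ keeps a sign near $0$. In the second case $\det[Z](0)=-X_j Y_i(0)\neq 0$, so $\det[Z]$ has no zeros in a small $V$. In either case Theorem~\ref{teonep} excludes equilibria in $V$ for all small $\varepsilon,\eta,\xi$. The field is then a regular flow box near the origin, hence structurally stable.

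\emph{Class $(C_1)$.} Here $X_iY_i(0)>0$, so the $i$-th component of every combination $\tfrac12(1+\lambda)X_i+\tfrac12(1-\lambda)Y_i$ with $\lambda\in[-1,1]$ is bounded away from zero near $0$. No equilibria appear, and the field is again a flow box. I expect some care to be needed when $\varphi\psi$ leaves $[-1,1]$. In that case I would use that $\lambda$ outside $[-1,1]$ is only attained on a compact set scaled by $(\varepsilon,\eta)$, and if needed shrink to $\lambda$ ranges where $\det$ stays nonzero. The degenerate return-map condition of $(C_1)$ concerns the polycycle around $\Sigma$. After smoothing, orbits crossing the $\varepsilon\times\eta$ box are transversal in the $x_i$ direction, so no closed orbit can be confined to $V$. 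The codimension-one monodromy thus disappears and is not a local smooth degeneracy.

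\emph{Class $(B_1)$.} This is the main obstacle. Here $\det[Z](0)=0$ with $(\det Z)_{x_i}(0)\neq 0$, so zeros of $\det$ form a smooth curve through $0$. Along this curve, $X$ and $Y$ are antiparallel because $X_iY_i<0$, so $H_i=0$ forces $\lambda=\varphi_\varepsilon\psi_\eta=(X_i+Y_i)/(Y_i-X_i)$. Near $0$ this value lies in $(-1,1)$, since $X_iY_i<0$ gives $|X_i+Y_i|<|X_i-Y_i|$. If $\varphi\psi(0)\notin[-1,1]$, I would argue that in a neighbourhood $V$ of $0$ of size comparable to $(\varepsilon,\eta)$, $\lambda$ stays outside $[-1,1]$, so no solution occurs there. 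Hence there is no equilibrium at the origin and the flow is regular. Outside the $(\varepsilon,\eta)$-box, $\lambda=\pm1$, so the required $\lambda\in(-1,1)$ cannot be attained there either. The delicate point is the intermediate region where $\varphi_\varepsilon\psi_\eta$ sweeps through $(-1,1)$. There I would use that $|x|\ge\varepsilon$ or $|y|\ge\eta$ forces $\varphi\psi\in\{\pm\psi,\pm\varphi\}$. I would then choose $V$ inside the region where $\varphi\psi$ remains outside $[-1,1]$. If this fails, the fallback is to show that any equilibrium that does appear is hyperbolic, because its determinant is $\partial_\lambda$-transversal via $(\det Z)_{x_i}(0)\neq0$.

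In each case, the $\xi G$ perturbation is handled by the second part of Theorem~\ref{teonep} and by continuity. The conclusion is that $Z^R_{\varepsilon,\eta}$ is structurally stable, so the codimension drops from one to zero.
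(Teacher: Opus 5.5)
Your treatment of $(C_1)$, and the first branch of your case split in $(A_1)$, has a genuine gap. You argue that the $i$-th component $\tfrac12(1+\lambda)X_i+\tfrac12(1-\lambda)Y_i$ stays away from zero because $X_iY_i(0)>0$, but this holds only for $\lambda\in[-1,1]$. Part (i) puts no restriction on the generalized transition functions, so $\lambda(0)=\varphi(0)\psi(0)$ can be any real number. For instance, with $X_i(0)=1$ and $Y_i(0)=2$ this component is $\tfrac12(3-\lambda)$, which vanishes at the origin when $\varphi(0)=\psi(0)=\sqrt{3}$. Your proposed repair does not work either. The set where $\varphi_\varepsilon\psi_\eta\notin[-1,1]$ need not be compact (e.g.\ $\varphi_1(x)=1/x+1\in(1,2]$ for all $x\geq 1$ in Section~\ref{section4}). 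It may also contain the origin itself. And ``shrinking to $\lambda$-ranges where $\det$ stays nonzero'' has no content, since $\det[Z]$ does not depend on $\lambda$.

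The missing observation is that in $(C_1)$, exactly as in $(C_0)$, $\det[Z](0)\neq 0$. Indeed, if $X_1Y_2=X_2Y_1$ at $0$ with all four components nonzero, then $X_1/Y_1=X_2/Y_2$, so $X_1Y_1(0)$ and $X_2Y_2(0)$ have the same sign, contradicting $X_iY_i(0)>0>X_jY_j(0)$. Since $(1+\lambda,1-\lambda)\neq(0,0)$ for every real $\lambda$, having $\det[Z]\neq 0$ on a fixed neighbourhood $V$ bounds $\|Z^R_{\varepsilon,\eta}\|$ from below on $V$ when $\xi=0$. The bound is uniform in $\varepsilon$, $\eta$ and the transition functions. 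It persists for small $|\xi|$ because $G$ is bounded on the range of $(\varphi,\psi)$. No transversality or return-map argument is needed. In $(A_1)$, the identity $\det[Z](0)=\pm X_jY_i(0)\neq 0$ that you wrote for your ``second case'' does not involve $Y_j$ and holds unconditionally, so the case split should simply be dropped. This determinant argument is precisely the paper's proof of $(A_1)$ and $(C_1)$.

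For $(B_1)$, your first step is exactly the paper's argument. The value of $\lambda$ required by $H_i=0$ on $\{\det[Z]=0\}$ lies in $(-1,1)$ near the origin, while $\lambda(0)=\varphi\psi(0)\notin[-1,1]$, so an $(\varepsilon,\eta)$-scaled neighbourhood of the origin contains no equilibrium. This already gives the flow-box conclusion, which is a statement about the germ at the origin. Everything after it should be discarded. For generalized transition functions, $\lambda$ need not equal $\pm 1$ outside the box. Moreover, a neighbourhood of fixed size free of equilibria does not exist in general. Take $X=(1,1)$ and $Y=(-1,-1+x_1+x_2)$, which lies in $(B_1)$ with required value $\lambda=0$, and take $\xi=0$. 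A continuous $\varphi$ necessarily vanishes at some $s_0$, and then $(\varepsilon s_0,-\varepsilon s_0)$ is an equilibrium for every $\varepsilon$, whatever $\varphi\psi(0)$ is. Your hyperbolicity fallback is also unjustified. The Jacobian determinant at such equilibria contains terms of order $1/\varepsilon$ and $1/\eta$ (cf.\ Section~\ref{sectioneqorigem}), and these can cancel despite $(\det Z)_{x_i}(0)\neq 0$; compare Theorem~\ref{teo4}, where $(B_1)$ families acquire nonhyperbolic equilibria. Even hyperbolicity alone would not rule out saddle connections.
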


Therefore, as mentioned previously, the proof of Theorem \ref{teo2} will show that the hypothesis regarding the transition function is important for one of the subclasses of vector fields exhibiting a generic codimension-one bifurcation at the origin. 
However, when considering transition functions such that $\varphi \, \psi(0) \in (-1,1)$, for instance, the Sotomayor-Teixeira transition functions, equilibrium points may exist in the nonlinear double regularization for the corresponding subclass. 
As before, if the equilibrium point is hyperbolic, the result remains valid by the Hartman-Grobman Theorem. 
Nevertheless, if a nonhyperbolic equilibrium point exists, the theorem does not hold. 
For example, we obtain the following results for $\alpha$ sufficiently small.

\begin{teo} \label{teo4}
%	Let $Z = (X, Y) \in \Omega$ satisfying condition $(B_1)$, and consider the families 
%	$\widetilde{Z}_{\alpha} = (\widetilde{X}_{\alpha}, \widetilde{Y})$ and 
%	$\widehat{Z}_{\beta} = (\widehat{X}_{\beta}, \widehat{Y})$ in $\Omega$, defined by
	Let $Z = (X, Y) \in \Omega$ satisfy condition $(B_1)$, and consider the families 
	\[
	\widetilde{Z}_{\alpha} = (\widetilde{X}_{\alpha}, \widetilde{Y}) \quad \text{and} \quad 
	\widehat{Z}_{\beta} = (\widehat{X}_{\beta}, \widehat{Y})
	\] 
	in $\Omega$, defined by
	\begin{subequations}\label{eq:pseudoequilibrio_duplo}
		\begin{align}
			\label{eq:pseudoequilibrio_duplo3}
			\widetilde{X}_{\alpha}(x_1, x_2) &=
			\begin{pmatrix}
				a - b\, c_2\, x_1 \\[3pt]
				b + a\, \alpha
			\end{pmatrix},
			&
			\widetilde{Y}(x_1, x_2) &=
			\begin{pmatrix}
				- a \\[3pt]
				- b + a\, c_1\, x_2
			\end{pmatrix}, \\[8pt]
			\label{eq:pseudoequilibrio_duplo4}
			\widehat{X}_{\beta}(x_1, x_2) &=
			\begin{pmatrix}
				a - b\, c_1\, x_1 - b\, c_2\, x_2 \\[3pt]
				b + a\, \beta
			\end{pmatrix},
			&
			\widehat{Y}(x_1, x_2) &=
			\begin{pmatrix}
				- a \\[3pt]
				- b + y^2
			\end{pmatrix}.
		\end{align}
	\end{subequations}
where $a = \operatorname{sgn}(X_1(0))$, $b = \operatorname{sgn}(X_2(0))$, and 
$c_i = \operatorname{sgn}\!\big((\det[Z])_{x_j}(0)\big)$, with $i,j \in \{1,2\}$ and $i \neq j$. If we take $\xi = 0$ and consider the Sotomayor--Teixeira transition functions in~\eqref{eq2par1} satisfying $\varphi(0) = 0$, then:
	\begin{itemize}
		\item[(i)] There exists a parameter value $\alpha_0 = \alpha_0(\varepsilon, \eta)$ for which the family~\eqref{eq:pseudoequilibrio_duplo3} undergoes a transcritical bifurcation at $(0, -\alpha / c_1)$, 
		for $(\varepsilon, \eta) \in (0, \varepsilon_0) \times (0, \eta_0)$. 
		In this case, the codimension is preserved, whereas the genericity condition is not.
		
		\item[(ii)] If $\psi(0) = 0$, the family~\eqref{eq:pseudoequilibrio_duplo4} undergoes a saddle-node bifurcation at $(0,0,0)$, 
		for $(\varepsilon, \eta) \in (0, \varepsilon_0) \times (0, \eta_0)$. 
		In this case, both the codimension and the genericity condition are preserved.
	\end{itemize}

\end{teo}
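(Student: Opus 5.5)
With $\xi=0$ the regularization \eqref{eq2par1} is
\[
Z^R_{\varepsilon,\eta}=\tfrac12\big(H_1,H_2\big),\qquad H_i=\Phi\,(X_i-Y_i)+(X_i+Y_i),\qquad \Phi(x_1,x_2)=\varphi_\varepsilon(x_1)\,\psi_\eta(x_2),
\]
so I would work directly with the planar system $\dot x=(H_1,H_2)$ restricted to the box $|x_1|<\varepsilon$, $|x_2|<\eta$. Outside this box $|\Phi|=1$ and the field equals $X$ or $Y$, neither of which vanishes near the origin, so all equilibria lie in the box. By Theorem~\ref{teonep}, an equilibrium requires $\det[Z]=0$ together with $H_i=0$. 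I would first locate this curve explicitly, then reduce to a one-dimensional normal form and check the Sotomayor conditions for the transcritical and saddle-node bifurcations.

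\textbf{Part (i).} For $\widetilde Z_\alpha$ the first component is
\[
H_1=\Phi\,(2a-bc_2x_1)-bc_2x_1 .
\]
Since $\varphi(0)=0$, the line $x_1=0$ gives $\Phi=0$ and hence $H_1=0$. The second component is
\[
H_2=\Phi\,(2b+a\alpha-ac_1x_2)+a\alpha+ac_1x_2,
\]
which on $x_1=0$ vanishes iff $x_2=-\alpha/c_1$. This gives the equilibrium $(0,-\alpha/c_1)$, which lies in the box for $|\alpha|$ small. Next I would compute the Jacobian there. Using $\partial_{x_1}\Phi=\varphi'(0)\psi_\eta(x_2)/\varepsilon$ and $\partial_{x_2}\Phi=0$ on $x_1=0$, the matrix is triangular:
\[
\partial_{x_1}H_1=\frac{2a\,\varphi'(0)\,\psi_\eta(x_2)}{\varepsilon}-bc_2,\qquad \partial_{x_2}H_1=0,\qquad \partial_{x_2}H_2=ac_1\neq0 .
\]
One eigenvalue is therefore $\partial_{x_1}H_1$, which vanishes when $\psi_\eta(-\alpha/c_1)=bc_2\,\varepsilon/(2a\varphi'(0))$. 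For $\varepsilon$ small the right-hand side lies in $(-1,1)$, so by the intermediate value theorem (with $\psi$ monotone) there is a unique $\alpha_0(\varepsilon,\eta)$ solving it. At $\alpha_0$ I would take a center manifold tangent to the $x_1$-direction, which is the eigenvector of the zero eigenvalue. Because $x_1=0$ is invariant for the $x_1$-dynamics ($H_1$ vanishes there for all $\alpha$), the reduced equation has the form $\dot u=u\,g(u,\alpha)$ with $u=0$ an equilibrium for every $\alpha$. The nonzero equilibrium branch comes from the second zero set $H_1=0$, namely $\Phi=bc_2x_1/(2a-bc_2x_1)$. I would verify the transversality conditions $g_\alpha\neq0$ and $g_u\neq0$ by differentiating $\psi_\eta(-\alpha/c_1)$ in $\alpha$ and using $\varphi''$. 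The trivial branch exists only because of the symmetry $\varphi(0)=0$, which makes the bifurcation codimension one but nongeneric.

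\textbf{Part (ii).} With $\varphi(0)=\psi(0)=0$, I would show that at $\beta=0$ the origin is an equilibrium. Indeed, with $\Phi(0)=0$,
\[
H_1(0)=a-a=0,\qquad H_2(0)=(b+a\beta)+(-b)=a\beta,
\]
so the origin is an equilibrium exactly when $\beta=0$. The gradient of $\Phi$ vanishes at the origin because both factors vanish, so $D(H_1,H_2)(0)$ comes only from $X+Y$. Here I read $y^2$ in $\widehat Y$ as $x_2^2$. This gives
\[
D(H_1,H_2)(0)=\begin{pmatrix}-bc_1 & -bc_2\\ 0&0\end{pmatrix},
\]
which has a simple zero eigenvalue, a nonzero eigenvalue $-bc_1$, and zero-eigenvector $v=(-c_2,c_1)$ (after normalization). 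I would then check the Sotomayor conditions. The first is $w\cdot\partial_\beta F=a\neq0$, where $w=(0,1)$ is the left null vector. The second is $w\cdot D^2F(v,v)\neq0$, which comes from $\partial^2_{x_2}$ of $x_2^2$ plus the mixed term $\partial_{x_1}\partial_{x_2}\Phi\,(X_2-Y_2)=\frac{\varphi'(0)\psi'(0)}{\varepsilon\eta}\,2b$. This evaluates to
\[
w\cdot D^2F(v,v)=2c_1^2+2\cdot 2b(-c_2c_1)\frac{\varphi'(0)\psi'(0)}{\varepsilon\eta}.
\]
The factor $1/(\varepsilon\eta)$ dominates for small $\varepsilon,\eta$ and is nonzero, so the generic saddle-node follows.

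\textbf{Main obstacle.} The hardest step is the transversality and nondegeneracy check in (i). It needs a careful center-manifold reduction that keeps the $\alpha$-dependence of the equilibrium location. The sign conventions $c_1,c_2$ and the nonzero second derivative must also be controlled uniformly in $(\varepsilon,\eta)$.
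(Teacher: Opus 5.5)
Your Part (ii) is correct and follows the paper's computation. You also keep the $O(1)$ contribution of $x_2^2$, which the paper's stated value omits; this is harmless because the $1/(\varepsilon\eta)$ term dominates. In Part (i), the equilibrium $(0,-\alpha/c_1)$, the entries $\partial_{x_1}H_1$, $\partial_{x_2}H_1$, $\partial_{x_2}H_2$, and the equation for $\alpha_0$ are all right. The factorization $H_1=x_1K(x_1,x_2)$, which gives a reduced equation $\dot u=u\,g(u,\alpha)$, is a clean alternative to the paper's Jordan-form-plus-Sotomayor argument. It makes $w^Tg_\mu=0$ and the persistent trivial branch immediate, and it gives $g_\alpha(0,\alpha_0)=-2a\varphi'(0)\psi'(s_0)/(c_1\varepsilon\eta)\neq 0$ with $s_0=-\alpha_0/(c_1\eta)$.

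The gap is in the nondegeneracy coefficient, because the kernel direction at $(0,-\alpha_0/c_1)$ is not the $x_1$-axis. The entry you did not compute is
\[
\partial_{x_1}H_2=\frac{2\varphi'(0)\,\psi_\eta(x_2)}{\varepsilon}\,(b+a\alpha_0)=a c_2(1+ab\alpha_0)\neq 0 .
\]
So $DH=\begin{pmatrix}0&0\\ ac_2(1+ab\alpha_0)&ac_1\end{pmatrix}$ has left null vector $w=(1,0)$ but right null vector $v=(1,-c_1c_2(1+ab\alpha_0))$. The center manifold $x_2=h(u,\alpha)$ is therefore tilted, with $h_u(0,\alpha_0)=-c_1c_2(1+ab\alpha_0)$, and
\[
g_u(0,\alpha_0)=\tfrac12\,\partial^2_{x_1}H_1+h_u\,\partial_{x_1}\partial_{x_2}H_1
=\frac{bc_2\varphi''(0)}{2\varphi'(0)\varepsilon}-\frac a2-\frac{2ac_1c_2(1+ab\alpha_0)\,\varphi'(0)\,\psi'(s_0)}{\varepsilon\eta}.
\]
Under your tangency assumption you would compute only the first two terms (the ``$\varphi''$'' part). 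That is not the Sotomayor coefficient $w^TD^2H(v,v)=2g_u$, so showing it is nonzero proves nothing.

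The dominant term comes from the mixed derivative $\partial_{x_1}\partial_{x_2}\Phi=\varphi'(0)\psi'(s_0)/(\varepsilon\eta)$ paired with the tilt of the center direction. It is nonzero because $\psi'>0$ on $(-1,1)$, $s_0\to\psi^{-1}(0)$ as $\varepsilon\to 0$ independently of $\eta$, and $\alpha_0=O(\eta)$. It beats the $1/\varepsilon$ term only for $\eta$ small. This is exactly where the restriction $\eta<\eta_0$ is needed, and your version never uses it. Up to the nonzero factor $-ac_1/8$, this is the paper's $A_6$; the paper builds the tilt into the change of variables $s=(c_2/(2a)+bc_2\alpha_0/2)x_1+x_2$. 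With this correction your route goes through.
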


%
%\begin{teo}
%Consider the family $	\widetilde{Z_{\alpha}} =( 	\widetilde{X_{\alpha}},	\widetilde{Y}) \in \Omega$ such that 
%	\begin{equation}\label{eqpsuedoequilibrioduplo3}
%%	\widetilde{Z_{\alpha}} = \left\{ \begin{array}{l}
%		\widetilde{X_{\alpha}}(x,y) = \left( \begin{array}{c}
%			a - b c_2 x \\ 
%			b + a \alpha
%		\end{array}  \right) , \; x \, y >0, \qquad
%		\widetilde{Y}(x,y) = \left( \begin{array}{c}
%			-a \\ 
%			-b + a c_1 y
%		\end{array}  \right) , \; x \, y <0
%\end{equation}
%with  $a = sgn ( X_1(0) ), \; b=  sgn ( X_2(0) ), \; c_i = sgn (  (det [Z])_{x_j}(0) )$ such that $i,j = 1,2$ and $i \neq j$. If we condiser Sotomayor-Teixeira transition functions in (\ref{eq2par1}) such that $\varphi(0)=0$ and taking $\xi =0$ then there is a parameter value $\alpha_0=\alpha_0(\epsilon,\eta)$ which  family \eqref{eqpsuedoequilibrioduplo3} undergoes a transcritical bifurcation at $(0,-\alpha/c_1)$ for $(\varepsilon,\eta) \in (0,\varepsilon_0) \times (0,\eta_0)$. In this las case, the codimension is preserved but the genericity not.
%\end{teo}

Theorem~\ref{teo2} shows that the codimension is not preserved, whereas Theorem~\ref{teo4} demonstrates that there may also exist situations in which the codimension is preserved but the genericity property is lost.

In Section~\ref{section4}, we present an example (see Figure~\ref{figcurveq}) in which a curve of equilibria may appear during the smoothing process but does not persist for all values of the smoothing parameters. Moreover, as in the previous example, in the double nonlinear regularization of $Z \in \Omega$, there may also exist situations in which a fixed regularization parameter is considered, sufficiently small and dependent on the bifurcation parameter. That is, even though system~(\ref{eq2par1}) is sufficiently close to $Z$, pointwise convergence with respect to the regularization parameters does not occur. To establish this fact for the family~(\ref{eq2par2}), consider the Sotomayor--Teixeira transition functions in~(\ref{eq2par1}) such that $\varphi(0) \neq 0$. Then, there exists $p_0 \in (-\eta, \eta)$ satisfying $\psi(p_0) = 0$. Therefore, taking $\xi = 0$, we show at the end of Section~\ref{section4} that there exists a parameter value $\alpha_0 = -\eta \, c_1 \, p_0$ for which the family~(\ref{eq2par2}) undergoes a saddle-node bifurcation at $(0, -\alpha_0 / c_1)$ for a fixed $\eta = \eta_0$ and $\varepsilon \in (0, \varepsilon_0)$. In this case, both the codimension and the genericity are preserved.

%$\;$

%Caso nao façamos todos os casos tentar escrever algo como abaixo, de onde eu tirei de  `Generic bifurcations of low codimension of planar Filippov Systems`'',

%It is not the purpose of this paper to give a complete study of all the codimension-2 singulari- ties. After listing its whole set, we only study those which present rich dynamics in their unfolding. Moreover, to rigorously complete this study, one would need to see that any generic unfolding of the chosen singularities presents the same behavior as the studied normal form. Nevertheless, since we give the intrinsic conditions which define the codimension-2 singularities and we state the generic non-degeneracy conditions which their generic unfoldings need to satisfy, we expect that any generic unfolding satisfying these conditions presents the same behavior as the normal forms studied in this paper.

Section~\ref{sectioneqorigem} presents a detailed analysis of certain hyperbolicity conditions that may arise for the equilibrium points. The aim of this paper is not to provide a complete study of all possible equilibrium points; rather, our primary interest, as mentioned previously, is to examine whether the codimension is preserved following the regularization process. Accordingly, we present results in this direction. Since the expressions for the derivatives of the smoothed vector field are rather intricate, we limit our analysis to first-order derivatives, which are essential both for constructing the examples presented here and for guiding future investigations.

%Section~\ref{sectioneqorigem} presents a detailed analysis of certain hyperbolicity conditions for the equilibria. This paper does not aim to provide a comprehensive study of all possible singularities; rather, our primary focus, as noted above, is to determine whether the codimension is preserved under the regularization process. Accordingly, we report results in this direction. Given the complexity of the derivatives of the smoothed vector field, we restrict our attention to first-order derivatives, which are crucial both for constructing the examples presented and for guiding further investigations.

The paper is organized as follows. Section~\ref{section2} presents the Filippov conditions and reviews previous results related to the objects under consideration. Section~\ref{section3} contains the proofs of our main results. In Section~\ref{sectioneqorigem}, we perform a hyperbolicity analysis of the equilibrium point at the origin, and finally, Section~\ref{section4} is devoted to the presentation of illustrative examples.

\section{Filippov Systems, Local Structural Stability, and Codimension-One Bifurcations}\label{section2}

%Instead of focusing on the value of the vector field at individual points,  Filippov considered what the vector field looks like around each point, associating a set-valued map \cite{Fil1988} and differential inclusions \cite{Smirnov2002}. This approach is called the Filippov convention and, we call the piecewise smooth vector field \eqref{sistemacruz} the Filippov system when it respects the Filippov convention for its solutions. For this reason, in the literature of Filippov systems, one finds several definitions of orbit  \cite{BruPuSim2001,Fil1988,GuaSeaTeixeira2011}. The authors chose orbits adapted to their purposes because Filippov Systems have ambiguity in the chosen solutions reaching the switching curve.

Instead of focusing on the value of the vector field at individual points, Filippov considered the behavior of the vector field in a neighborhood of each point, associating a set-valued map \cite{Fil1988} and differential inclusions \cite{Smirnov2002}. This approach is known as the Filippov convention, and we refer to the piecewise-smooth vector field \eqref{sistemacruz} as a Filippov system when its solutions satisfy the Filippov convention. For this reason, in the literature on Filippov systems, one finds several definitions of orbits \cite{BruPuSim2001,Fil1988,GuaSeaTeixeira2011}. Different authors adopt notions of orbits suited to their specific purposes, since Filippov systems exhibit ambiguity in the choice of solutions when trajectories reach the switching curve.

%Therefore, firstly in this section, we are interested in presenting the definitions of the local flow of piecewise smooth vector fields \eqref{sistemacruz} in a neighborhood of the origin. These definitions are based on work \cite{LarSeaTei2021} which uses the ideas of the trajectories described in \cite{Fil1988, GuaSeaTeixeira2011}. After that, we are going to show definitions of $\Sigma$-structural stability and bifurcations of codimension one in the context of family $\Omega$, and then we will exhibit the classification carried out by Larrosa, Teixeira, and M-Seara in \cite{LarTereTei2021}, meeting our goals. In the end, we will do a little discussion about structural stability and bifurcations of codimension one for smooth vector fields, which will be useful in the discussion of regularization of the piecewise smooth vector fields \eqref{sistemacruz}.

Therefore, in this section we first present the definitions of the local flow of the piecewise-smooth vector field \eqref{sistemacruz} in a neighborhood of the origin. These definitions are based on the work of \cite{LarSeaTei2021}, which builds on the notion of trajectories introduced in \cite{Fil1988,GuaSeaTeixeira2011}. Next, we introduce the concepts of \(\Sigma\)-structural stability and codimension-one bifurcations in the context of the family \(\Omega\). We then present the classification obtained by Larrosa, Teixeira, and M-Seara in \cite{LarSeaTei2021}, which is directly relevant to our objectives. Finally, we provide a brief discussion on structural stability and codimension-one bifurcations for smooth vector fields. This discussion will be useful in the subsequent analysis of the regularization of the piecewise-smooth vector fields \eqref{sistemacruz}.

In $\mathcal{U} \setminus \Sigma$, a continuous differential equation defines the solutions; therefore, the main issue is the behavior of solutions in a neighborhood of $\Sigma$. Since $\Sigma$ can be regarded as the union of two regular manifolds, $\Sigma = \Sigma_1 \cup \Sigma_2$, we may decompose
$
\Sigma_1 = \overline{\Sigma_1^{+} \cup \Sigma_1^{-}},
$
where
\[
\Sigma_1^{+} = \{ p = (0,x_2) \in \Sigma_1 : x_2 > 0 \}
\quad \text{and} \quad
\Sigma_1^{-} = \{ p = (0,x_2) \in \Sigma_1 : x_2 < 0 \},
\]
which are themselves regular manifolds. Analogously, we can write
$
\Sigma_2 = \overline{\Sigma_2^{+} \cup \Sigma_2^{-}}.
$
In this setting, the Filippov convention \cite{Fil1988} removes the ambiguity in the definition of solutions on the switching set. Consequently, for $i = 1,2$, it is possible to identify the crossing, sliding, and escaping regions on $\Sigma_i^{\pm}$.
\begin{equation*}
	\begin{aligned}
		&\mbox{(CR) Crossing region:} \\
		&\phantom{aaaaaaa}\Sigma_i^c = \{ p \in \Sigma^\pm_i : Xf(p) \cdot Yf(p)>0\} = \{ p \in \Sigma^\pm_i : X_i \cdot Y_i (p) >0 \}, \\
		&\mbox{(SR) Sliding region:} \\
		&\phantom{aaaaaaa}\Sigma_{i}^d = \{ p \in \Sigma^\pm : Xf(p) <0, \; Yf(p)>0\} = \\
		&\phantom{aaaaaaa}\phantom{\Sigma_{i}^d}=\lbrace p \in \Sigma_i^+ : X_i(p)< 0, \; Y_i(p) > 0 \rbrace \cup \lbrace p \in \Sigma_i^- : X_i(p)> 0,\; Y_i(p) < 0 \rbrace,\\
		&\mbox{(ER) Escaping region:} \\
		&\phantom{aaaaaaa}\Sigma_{i}^e = \{ p \in \Sigma^\pm : Xf(p) >0, \; Yf(p)<0\} = \\
		&\phantom{aaaaaaa}\phantom{\Sigma_{i}^d}=\lbrace p \in \Sigma_i^+ : X_i(p)> 0, \; Y_i(p) < 0 \rbrace \cup \lbrace p \in \Sigma_i^- : X_i(p)< 0, \; Y_i(p) > 0 \rbrace,
			\end{aligned} 
	\end{equation*}
%where $Xf(p)= \langle \bigtriangledown f(p), X(p) \rangle$. As a result, each $\Sigma_i$ can be decomposed as the closure of the regions (CR), (SR), and (ER). Then the crossing, sliding, and escaping regions in $\Sigma$ are the union of the corresponding regions in $\Sigma_1$ and $\Sigma_2$. In the regions $\Sigma_i^{s,e}$, for $i=1,2$, there is a method to describe the system's behavior in the switching curve. We choose the convex combination of the two vector fields that is tangent to it. The Filippov sliding vector field $Z^{\Sigma_i}$ is defined by:
where $Xf(p)= \langle \nabla f(p), X(p) \rangle$. As a consequence, each $\Sigma_i$ can be decomposed as the closure of the crossing (CR), sliding (SR), and escaping (ER) regions. The crossing, sliding, and escaping regions in $\Sigma$ are then given by the union of the corresponding regions in $\Sigma_1$ and $\Sigma_2$. In the regions $\Sigma_i^{s,e}$, for $i=1,2$, there exists a well-defined procedure to describe the system's behavior along the switching curve. In this case, one selects the convex combination of the two vector fields that is tangent to the switching manifold. The corresponding Filippov sliding vector field $Z^{\Sigma_i}$ is defined by:
\begin{equation}\label{eqdocampodeslizanteouescape}
	Z_i^s(p) = (1 - \alpha(p) ) X(p) + \alpha(p) Y(p) =  \dfrac{1}{Y_i(p) - X_i(p)} \left[ Y_i \, X_j(p)- X_i \,Y_j(p) \right]_{\displaystyle\left|_{\Sigma_i} \right.},
\end{equation}
where $\alpha(p) = \left[ \dfrac{Xf}{(X - Y)f} \right](p)$, with $i,j = 1,2$ and $i \neq j$. Moreover, a point $p \in \Sigma_i^{s,e}$ is called a \emph{pseudo-equilibrium} if $Z_i^s(p) = 0$, and it is said to be \emph{hyperbolic} if $D Z_i^s(p) \neq 0$.
%\begin{defi}
%	Fix $i=1,2$. The point $p \in \Sigma_i^{s,e}$ is a pseudo-equilibrium if $Z_i^s(p)= 0 $ and it is hyperbolic is $(Z_i^s)^\prime (p) \neq 0$.
%\end{defi}

%\begin{obs}
%	Observe that, if we consider $det Z(p) = (X_1 \cdot Y_2-X_2 \cdot Y_1)(p)$, then for $p \in \Sigma_i^{s,e}$, $Z_i^s$ can be rewrite as 
%	\begin{equation}
%	Z_i^s(p) = \dfrac{(-1)^{i} \cdot det Z(p)}{Y_i(p) - X_i(p)}, \; \mbox{for} \; i=1,2.
%\end{equation}
%\end{obs}
\begin{figure}[!htb]
\begin{center}
		
		\begin{overpic}[scale=0.75,tics=5]{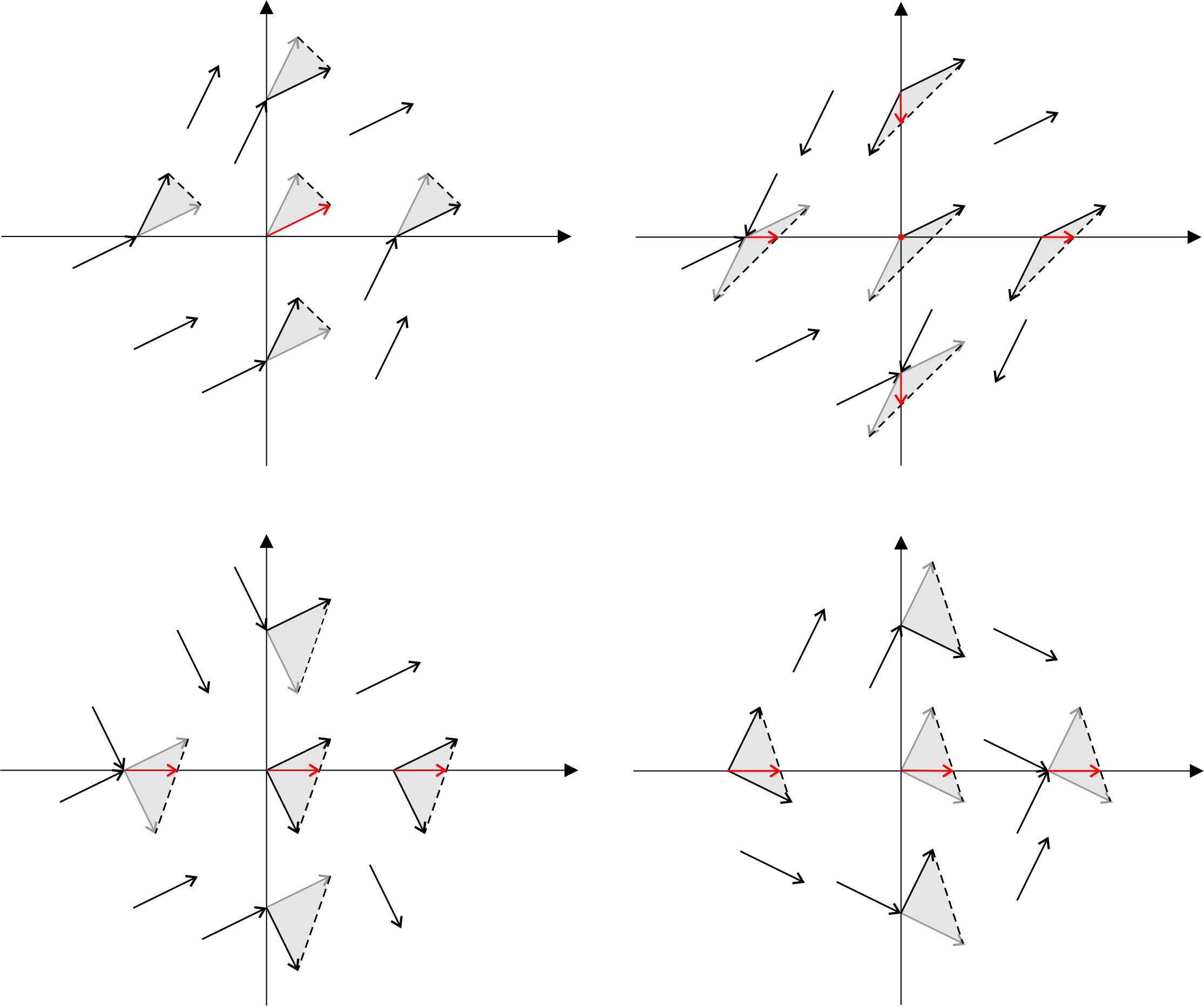}
			\vspace{0.2cm}
			
			%			\put(80,342){{(A)}}
			\put(80,168){{($A_0$)}}
			
			\put(295,168){{($B_0$)}}
			
			\put(80,-13){{($C_{01}$)}}
			
			\put(295,-13){{($C_{02}$)}}
			%campo1
			\put(6,300){{\parbox{0.55\linewidth}{$ 
						\left[ \begin{array}{cc}
							1 \\ 
							2  \\ 
						\end{array} \right]$}}}
			\put(145,300){{\parbox{0.5\linewidth}{$\left[ \begin{array}{cc}
							2 \\ 
							1  \\ 
						\end{array} \right]$}}}
			\put(6,205){{\parbox{0.5\linewidth}{$\left[ \begin{array}{cc}
							2 \\ 
							1  \\ 
						\end{array} \right]$}}}
			\put(145,205){{\parbox{0.5\linewidth}{$ \left[ \begin{array}{cc}
							1 \\ 
							2  \\ 
						\end{array} \right]$}}}
			%campo2
			\put(6,120){{\parbox{0.55\linewidth}{$ 
						\left[ \begin{array}{cc}
							1 \\ 
							-2  \\ 
						\end{array} \right]$}}}
			\put(145,120){{\parbox{0.5\linewidth}{$\left[ \begin{array}{cc}
							2 \\ 
							1  \\ 
						\end{array} \right]$}}}
			\put(6,25){{\parbox{0.5\linewidth}{$\left[ \begin{array}{cc}
							2 \\ 
							1  \\ 
						\end{array} \right]$}}}
			\put(145,25){{\parbox{0.5\linewidth}{$ \left[ \begin{array}{cc}
							1 \\ 
							-2  \\ 
						\end{array} \right]$}}}		
			%campo3
			\put(215,300){{\parbox{0.55\linewidth}{$ 
						\left[ \begin{array}{cc}
							-1 \\ 
							-2  \\ 
						\end{array} \right]$}}}
			\put(360,300){{\parbox{0.5\linewidth}{$\left[ \begin{array}{cc}
							2 \\ 
							1  \\ 
						\end{array} \right]$}}}
			\put(360,205){{\parbox{0.5\linewidth}{$\left[ \begin{array}{cc}
							-1 \\ 
							-2  \\ 
						\end{array} \right]$}}}
			\put(215,205){{\parbox{0.5\linewidth}{$ \left[ \begin{array}{cc}
							2 \\ 
							1  \\ 
						\end{array} \right]$}}}
			%campo4
			\put(215,120){{\parbox{0.55\linewidth}{$ 
						\left[ \begin{array}{cc}
							1 \\ 
							2  \\ 
						\end{array} \right]$}}}
			\put(360,120){{\parbox{0.5\linewidth}{$\left[ \begin{array}{cc}
							2 \\ 
							-1  \\ 
						\end{array} \right]$}}}
			\put(360,25){{\parbox{0.5\linewidth}{$\left[ \begin{array}{cc}
							1 \\ 
							2  \\ 
						\end{array} \right]$}}}
			\put(215,25){{\parbox{0.5\linewidth}{$ \left[ \begin{array}{cc}
							2 \\ 
							-1  \\ 
						\end{array} \right]$}}}	
			%			\put(-18,-10){{\parbox{0.5\linewidth}{Fonte:  Elaborado pelo autor.}}}
			
		\end{overpic}

	\end{center}
	
	$\;$
	\caption{Examples of Filippov conventions and orbits at origin.}\label{exemplos_inclusao_filippov}

\end{figure}

%A point $p \in \Sigma \setminus \{0\}$ is a tangency point if $Xf(p)=0$ or $Yf(p)=0$. It is easy to see that the tangency points occur at the borders of sewing, sliding, and escaping regions. In addition, the tangency point is a fold point when we have a quadratic tangency point, that is, $Xf(p)=0$ and $X(XF)(p)= \langle \bigtriangledown (Xf)(p), X(p) \rangle \neq 0$ or $Yf(p)=0$ and $Y(YF)(p)= \langle \bigtriangledown (Yf)(p), Y(p) \rangle \neq 0$. We are interested in low codimension singularities, so it is sufficient to consider only the fold points. To extend the concept of the fold point to the origin let us consider $\Sigma$ as the union of two regular manifolds and look at the tangency point in each of them considered separately.

A point $p \in \Sigma \setminus \{0\}$ is called a tangency point if $Xf(p)=0$ or $Yf(p)=0$. It is easy to see that tangency points occur at the boundaries between the crossing, sliding, and escaping regions. Moreover, a tangency point is called a fold point when the tangency is quadratic; that is, when either
\[
Xf(p)=0 \quad \text{and} \quad X(Xf)(p)= \langle \nabla (Xf)(p), X(p) \rangle \neq 0,
\]
or
\[
Yf(p)=0 \quad \text{and} \quad Y(Yf)(p)= \langle \nabla (Yf)(p), Y(p) \rangle \neq 0.
\]
Since we are interested in low-codimension singularities, it is sufficient to consider only fold points. To extend the concept of a fold point to the origin, we regard $\Sigma$ as the union of two regular manifolds and analyze the tangency points on each of them separately.

%
%\begin{defi}
%	A point $p \in \Sigma_i$ is a fold point of $X$ in $\Sigma_i$ if $X_i(p)=0$ and $X_j(p) \cdot \frac{\partial}{\partial x_j}X_i(p) \neq 0$ for $i,j=1,2$ and $i\neq j$. Moreover, $p \in \Sigma$ is a regular-fold of $X$ in $\Sigma_i$ if it is a fold point for $X$ in $\Sigma_i$ and $Y$ is transverse to $\Sigma$ at the origin, that is, $Y_k(0) \neq 0$ for $k=1,2$. Analogously, we define a fold point and a regular fold for $Y$.
%\end{defi}

\begin{defi}\label{defi2}
%	The point $p \in \Sigma_i $ is a fold point of $X$ \rm{(}$Y$\rm{)} in $\Sigma_i$ if $X_i(p)=0$ ($Y_i(p)=0$) and $X_j \, \frac{\partial }{\partial x_j} X_i(p) \neq 0$ 	$\left(Y_j \,\frac{\partial }{\partial x_j}Y_i(p) \neq 0 \right)$ for $i,j=1,2$ and $i \neq j$. Moreover, $p \in \Sigma$ is a {\it regular-fold} of $X$ \rm{(}$Y$\rm{)} in $\Sigma_i$ if it is a fold point for $X$ \rm{(}$Y$\rm{)} in $\Sigma_i$ and $Y$ \rm{(}$X$\rm{)} is transverse to $\Sigma$ at the origin, that is, $Y_k(0) \neq 0$ \rm{(}$X_k(0) \neq 0$\rm{)} for $k=1,2$. 
	A point $p \in \Sigma_i$ is called a \emph{fold point} of $X$ $(\text{resp. }  Y)$ in $\Sigma_i$ if 
	\[
	X_i(p) = 0 \quad (\text{resp. } Y_i(p) = 0) \quad \text{and} \quad 
	X_j \, (X_i)_{ x_j}(p) \neq 0 \quad (\text{resp. } 
	Y_j \, ( Y_i)_{x_j}(p) \neq 0),
	\] 
	for $i,j = 1,2$ with $i \neq j$. Moreover, a point $p \in \Sigma$ is called a \emph{regular-fold} of $X$ (resp. $Y$) in $\Sigma_i$ if it is a fold point for $X$ (resp. $Y$) in $\Sigma_i$ and the vector field $Y$ (resp. $X$) is transverse to $\Sigma$ at the origin; that is,
$
	Y_k(0) \neq 0$ $(\text{resp. }   X_k(0) \neq 0)$, $k = 1,2.
$

\end{defi}

\begin{figure}[!htb]

	\begin{center}
		
		\begin{overpic}[scale=0.75,tics=5]{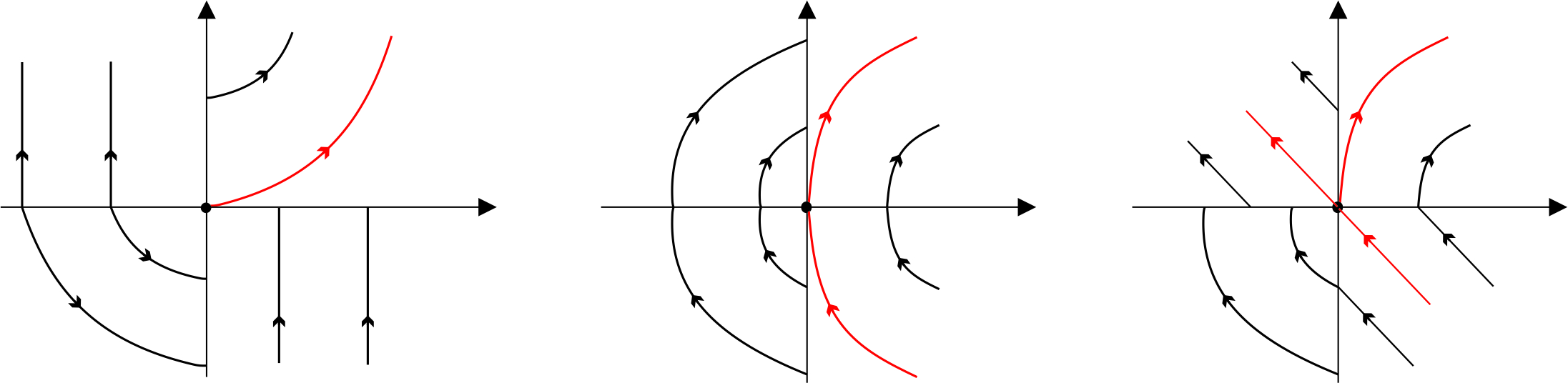}
			\vspace{0.2cm}
			
			%			\put(80,342){{(A)}}

			\put(45,-15){{(a)}}
			
			\put(198,-15){{(b)}}
			
			\put(333,-15){{(c)}}

			\put(45,99){{\parbox{0.55\linewidth}{$ 
						\Sigma_1$}}}
		
		\put(198,99){{\parbox{0.55\linewidth}{$ 
					\Sigma_1$}}}
		
		\put(333,99){{\parbox{0.55\linewidth}{$ 
					\Sigma_1$}}}
		
		\put(130,39){{\parbox{0.55\linewidth}{$ 
					\Sigma_2$}}}
				
			\put(265,39){{\parbox{0.55\linewidth}{$ 
						\Sigma_2$}}}
					
			\put(400,39){{\parbox{0.55\linewidth}{$ 
						\Sigma_2$}}}
							
					%campoXeY_1
						\put(105,80){{\parbox{0.55\linewidth}{$ 
								X$}}}
					
					\put(245,80){{\parbox{0.55\linewidth}{$ 
								X$}}}
							
								\put(105,5){{\parbox{0.55\linewidth}{$ 
										Y$}}}
							
							\put(245,5){{\parbox{0.55\linewidth}{$ 
										Y$}}}
				%campoXeY_2
			\put(05,80){{\parbox{0.55\linewidth}{$ 
						Y$}}}
			
			\put(160,80){{\parbox{0.55\linewidth}{$ 
						Y$}}}
			
			\put(05,5){{\parbox{0.55\linewidth}{$ 
						X$}}}
			
			\put(160,5){{\parbox{0.55\linewidth}{$ 
					X	$}}}

				%campoXeY_3
			\put(380,80){{\parbox{0.55\linewidth}{$ 
						X$}}}
			
			\put(295,80){{\parbox{0.55\linewidth}{$ 
						Y$}}}
			
			\put(295,5){{\parbox{0.55\linewidth}{$ 
						X$}}}
			
			\put(380,5){{\parbox{0.55\linewidth}{$ 
							Y$}}}
		\end{overpic}

	\end{center}
	
	$\;$
		\caption{The origin is a fold point of both $X$ and $Y$ in cases (a) and (b). In case (c), the origin is a regular-fold of $X$.
%			we have $X_2(0)=0$ with $X_1(p) \frac{\partial }{\partial x_1} X_2(p) > 0$ and  in (b) we have $X_1(0)=0$ with $X_2(p) \frac{\partial }{\partial x_2} X_1(p) > 0$. fazer os 4 casos lado a lado
}\label{dobranaorigem}
	
\end{figure}

%The next step consists in to define the trajectories of $Z$ through a point of $\mathcal{U}$, following \cite{GuaSeaTeixeira2011} and respecting the Fillipov conditions \eqref{incdif1}. We denote by $\varphi_X(t;p)$ ($\varphi_X(t;p)$) the flow of a regular vector field $X$ ($Y$). Consider $p \in \mathcal{U}^\pm $, if the curves $\{ \varphi_{X}(t;p) : t \in \re \}$ or $\{ \varphi_{Y}(t;p) : t \in \re \}$ are transversal to $\Sigma^{s,e}$ then we let us assume that the trajectories of $X$ and $Y$ through $p$ do not reach $\Sigma^s \cup \Sigma^e$ in finite time, they are relatively open. Now
%we are going to define the trajectories of a point in $ \mathcal{U} \setminus \{ 0 \}$.

%Now denote by $\varphi_X(t;p)$ ($\varphi_Y(t;p)$) the flow of vector field $X$ ($Y$).  Consider $p \in \mathcal{U}^\pm $, if the curves $\{ \varphi_{X}(t;p) : t \in \re \}$ or $\{ \varphi_{Y}(t;p) : t \in \re \}$ are transversal to $\Sigma^{s,e}$ then we let us assume that the trajectories of $X$ and $Y$ through $p$ do not reach $\Sigma^s \cup \Sigma^e$ in finite time. Based on \cite{GuaSeaTeixeira2011} then the next step consists in to define the trajectories of $Z$ through a point of $ \mathcal{U} \setminus \{ 0 \}$. Denoting by $\partial$ the boundary of the set  and considering $i=1,2$ follows that:
Let $\varphi_X(t;p)$ (resp. $\varphi_Y(t;p)$) denote the flow of the vector field $X$ (resp. $Y$). Consider a point $p \in \mathcal{U}^\pm$. If the curves
\[
\{ \varphi_X(t;p) : t \in \mathbb{R} \} \quad \text{or} \quad 
\{ \varphi_Y(t;p) : t \in \mathbb{R} \}
\]
are transversal to $\Sigma^{s,e}$, we assume that the trajectories of $X$ and $Y$ through $p$ do not reach $\Sigma^s \cup \Sigma^e$ in finite time. Following \cite{GuaSeaTeixeira2011}, the next step is to define the trajectories of the piecewise smooth vector field $Z$ through points in $\mathcal{U} \setminus \{0\}$. Denoting by $\partial$ the boundary of a set and considering $i = 1,2$, we obtain:
%\begin{defi} 
%	Denoting by $\partial$ the boundary of the set, let us consider $p \in \mathcal{U} \setminus \{ 0 \} $ then its trajectory $\varphi_Z(t,p)$ is given by:
	\begin{itemize}
		
%		\item[(i)] If $p \in \mathcal{U}^+ \cup \mathcal{U}^-$ then its trajectory is given by the trajectory of $X$ or $Y$, respectively;
%		
%		\item[(ii)] If $p \in \Sigma^c = \Sigma_1^c \cup \Sigma_2^c $ then its trajectory is the concatenation of its respective trajectories in $\mathcal{U}^+$ and $\mathcal{U}^-$;
%		
%		\item[(iii)] If $p \in \Sigma_{i}^s \cup \Sigma_{i}^e  $ then its trajectory is given by $\varphi_{Z_i^s}(t;p)$ with $Z_i^s$ described in \eqref{eqdocampodeslizanteouescape};
%		
%		\item[(iv)] If $p \in \partial\Sigma_{i}^e \cup \partial \Sigma_{i}^s \cup \partial\Sigma_{i}^c$ 
%		such that $\displaystyle\lim_{q \rightarrow p^+} \varphi_Z(t;q) = \displaystyle\lim_{q \rightarrow p^-} \varphi_Z(t;q)$, then its trajectory is given by $\varphi_Z(t;p) = \displaystyle\lim_{q \rightarrow p} \varphi_Z(t;q) $. These points are called { \it regular tangency points};
%		
%		\item[(v)] If $p \in \partial\Sigma_{i}^e \cup \partial\Sigma_{i}^s \cup \partial\Sigma_{i}^c$ 
%		and does not satisfy the last condition, then $\varphi_Z(t;p) = \lbrace p \rbrace $. These points are called {\it  singular tangency points}.
		\item[(i)] If $p \in \mathcal{U}^+ \cup \mathcal{U}^-$, its trajectory is given by the trajectory of $X$ or $Y$, respectively;
		
		\item[(ii)] If $p \in \Sigma^c = \Sigma_1^c \cup \Sigma_2^c$, its trajectory is given by the concatenation of the corresponding trajectories in $\mathcal{U}^+$ and $\mathcal{U}^-$;
		
		\item[(iii)] If $p \in \Sigma_i^s \cup \Sigma_i^e$, its trajectory is given by $\varphi_{Z_i^s}(t;p)$, where $Z_i^s$ is defined in \eqref{eqdocampodeslizanteouescape};
		
		\item[(iv)] If $p \in \partial \Sigma_i^e \cup \partial \Sigma_i^s \cup \partial \Sigma_i^c$ and satisfies
		\[
		\lim_{q \to p^+} \varphi_Z(t;q) = \lim_{q \to p^-} \varphi_Z(t;q),
		\]
		then its trajectory is defined by
		\[
		\varphi_Z(t;p) = \lim_{q \to p} \varphi_Z(t;q).
		\]
		Such points are called \emph{regular tangency points};
		
		\item[(v)] If $p \in \partial \Sigma_i^e \cup \partial \Sigma_i^s \cup \partial \Sigma_i^c$ and does not satisfy the condition in item (iv), then
		$
		\varphi_Z(t;p) = \{p\}.
		$ Such points are called \emph{singular tangency points}.
		
		\vspace{0.25cm}
	\end{itemize}
%\end{defi}
% in this case, we follow  definitions indicated in \cite{GuaSeaTeixeira2011} for the case when $\Sigma \setminus \{0 \}$ is a regular manifold. 
\begin{figure}[!htb]\label{ciclos}
	
	\begin{center}
		
			\begin{overpic}[scale=0.55,tics=5]{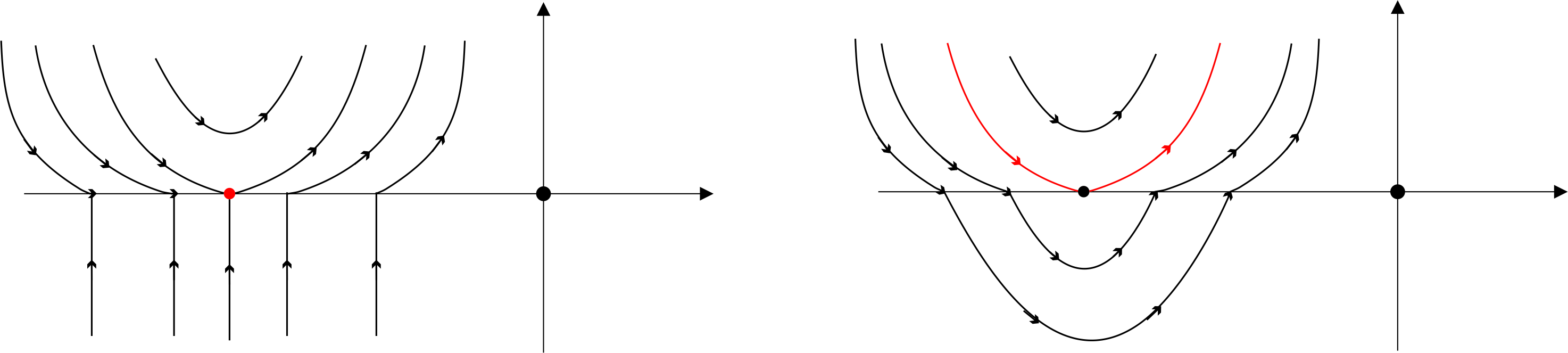}
			\vspace{0.2cm}
			
			%			\put(80,342){{(A)}}

			\put(49,83){{(a)}}
			
			\put(270,83){{(b)}}

			\put(130,90){{\parbox{0.55\linewidth}{$ 
						\Sigma_1$}}}
			
			\put(343,90){{\parbox{0.55\linewidth}{$ 
						\Sigma_1$}}}

			\put(180,36){{\parbox{0.55\linewidth}{$ 
						\Sigma_2$}}}
			
			\put(392,36){{\parbox{0.55\linewidth}{$ 
						\Sigma_2$}}}

		\end{overpic}

		\caption{%(a) Example of regular tangency trajectory; (b) Example of singular tangency trajectory .
		(a) Trajectory exhibiting a regular tangency; (b) trajectory exhibiting a singular tangency.
		}
		
	\end{center}
	
\end{figure}
On the other hand, when the vector fields are transversal at the origin for
$i,j = 1,2$, the trajectory $\varphi_Z(t,0)$ is defined, following
\cite{LarSeaTei2021}, as:
%\begin{defi}\label{defitraj}
%	Consider $\lbrace 0 \rbrace \in \Sigma$, then  $\varphi_Z(t,0)$ will be given by:
%	
	\begin{itemize}
		
%		\item[($i_0$)] If $\lbrace 0 \rbrace \in \Sigma = \overline{\Sigma^c}=   \overline{\Sigma_1^c \cap \Sigma_2^c} $ then there is only trajectory of $X$ or $Y$  through the origin and we define  $\varphi_Z(t;0)$ as being this trajectory, see Figure \ref{exemplos_inclusao_filippov} ($A_0$);
%		
%		\item[($ii_0$)]  If $\lbrace 0 \rbrace \in  \overline{(\Sigma_{i}^e \cup \Sigma_{i}^s)} \cap \overline{\Sigma_{j}^c}$ with $i \neq j$, then   $\varphi_Z(t;0) = \varphi_{Z_i^s}(t;0)$ such that $Z_i^s$ is described in \eqref{eqdocampodeslizanteouescape}, see Figure \ref{exemplos_inclusao_filippov} ($C_{0i}$). In this case, the origin is a regular point of the Filippov sliding vector field \eqref{eqdocampodeslizanteouescape};
%		
%		\item[($iii_0$)] If $\lbrace 0 \rbrace \in \cap_{i=1} ^2  \overline{(\Sigma_{i}^e \cup \Sigma_{i}^s)}$  then $\varphi_Z(t;0) = \lbrace 0 \rbrace $, see Figure \ref{exemplos_inclusao_filippov} ($B_0$).

\item[($i_0$)]
If $\{0\} \in \Sigma = \overline{\Sigma^c}
= \overline{\Sigma_1^c \cap \Sigma_2^c}$,
then there is a unique trajectory of either $X$ or $Y$ passing through the
origin. In this case, $\varphi_Z(t;0)$ is defined as this trajectory; see
Figure~\ref{exemplos_inclusao_filippov} ($A_0$).

\item[($ii_0$)]
If $\{0\} \in \overline{\Sigma_i^e \cup \Sigma_i^s} \cap \overline{\Sigma_j^c}$
with $i \neq j$, then
$
\varphi_Z(t;0) = \varphi_{Z_i^s}(t;0),
$
where $Z_i^s$ is defined in~\eqref{eqdocampodeslizanteouescape}; see
Figure~\ref{exemplos_inclusao_filippov} ($C_{0i}$).
In this case, the origin is a regular point of the Filippov sliding vector field
defined in~\eqref{eqdocampodeslizanteouescape}.

\item[($iii_0$)]
If $\{0\} \in \bigcap_{i=1}^{2} \overline{\Sigma_i^e \cup \Sigma_i^s}$,
then
$
\varphi_Z(t;0) = \{0\};
$
see Figure~\ref{exemplos_inclusao_filippov} ($B_0$).

				\vspace{0.25cm}
				
	\end{itemize}

A {\it regular orbit} is an orbit
\[
\gamma = \{\varphi_Z(t,p) : t \in \mathbb{R}\}
\]
that is entirely contained in the region
$\mathcal{U}^+ \cup \mathcal{U}^- \cup \Sigma^c$. A {\it pseudo-cycle} (also called a {\it closed contour}) is defined as the closure
of a finite collection of regular orbits
$\gamma_1, \ldots, \gamma_n$ such that the endpoints (arrival and departure points)
of each orbit $\gamma_i$ coincide with endpoints of $\gamma_{i+1}$, including the
connection between $\gamma_n$ and $\gamma_1$.
These connections form a curve homeomorphic to
$\mathbb{S}^1$, in such a way that at least one point corresponds to either two
departure points or two arrival points. 

Pseudo-cycles are not genuine closed orbits; however, they are preserved under the
equivalence relation considered by Larrosa, M-Seara, and Teixeira
in~\cite{LarSeaTei2021}.
Moreover, pseudo-cycles constitute the only type of recurrent behavior containing
the origin that appears in codimension-one bifurcation unfoldings. For other notions of periodic behavior, such as {\it regular periodic orbits} and
{\it cycles}, see~\cite{GuaSeaTeixeira2011,LarSeaTei2021}.
Note that the existence of any type of periodic orbit requires transitions between
different quadrants or regions.
Therefore, it is necessary to introduce a precise definition of such transitions.

\begin{defi}
%	We say that  $Z \in \Omega$ is transient in $\mathcal{U}^{\pm}$ if for any $p \in \overline{\mathcal{U}^{\pm}}$ there is $t_i(p) \in \re$ such that $\varphi_Z(t_i(p);p) \in \Sigma_i$ for $i=1,2$ and $\varphi_Z(t;p) \in \mathcal{U}^{\pm}$ for all $t \in [min \lbrace t_1(p),  t_2(p) \rbrace , max \lbrace t_1(p), t_2(p) \rbrace]$.   Then $Z$ transient if it is transient in $\mathcal{U}^+$ and $\mathcal{U}^-$.
	
We say that $Z \in \Omega$ is {\it transient} in $\mathcal{U}^{\pm}$
if, for any $p \in \overline{\mathcal{U}^{\pm}}$, there exist times
$t_i(p) \in \mathbb{R}$, $i=1,2$, such that
\[
\varphi_Z(t_i(p);p) \in \Sigma_i,
\]
and
\[
\varphi_Z(t;p) \in \mathcal{U}^{\pm}
\quad \text{for all} \quad
t \in \big[ \min\{t_1(p),t_2(p)\}, \max\{t_1(p),t_2(p)\} \big].
\]
Moreover, $Z$ is {\it transient} if it is transient in both
$\mathcal{U}^+$ and $\mathcal{U}^-$.

\end{defi}
\begin{figure}[!htb]

\begin{center}
	
	$\;$

	\begin{overpic}[scale=0.9,tics=5]{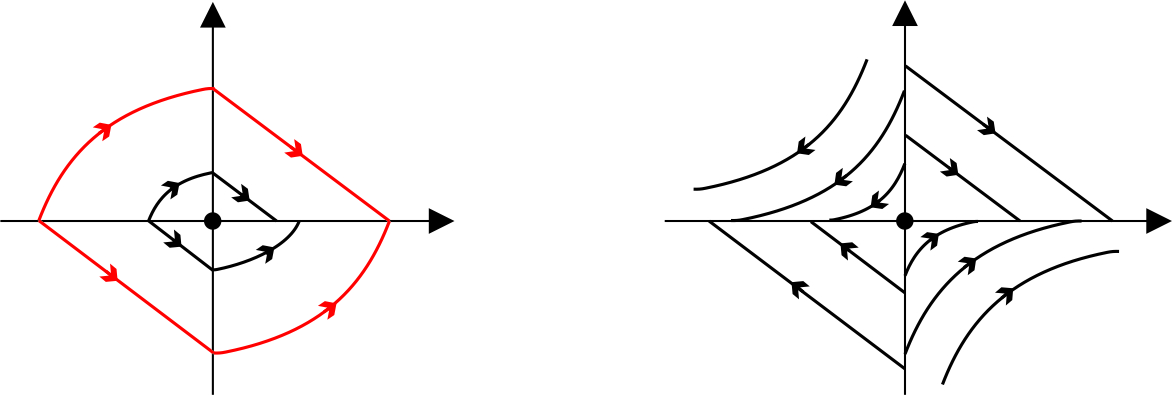}
		\vspace{0.2cm}
		
		%			\put(80,342){{(A)}}

		\put(40,-17){{(a)}}
		
		\put(190,-17){{(b)}}

		\put(41,88){{\parbox{0.55\linewidth}{$ 
					\Sigma_1$}}}
		
		\put(191,88){{\parbox{0.55\linewidth}{$ 
					\Sigma_1$}}}

		\put(100,33){{\parbox{0.55\linewidth}{$ 
					\Sigma_2$}}}
		
		\put(255,33){{\parbox{0.55\linewidth}{$ 
					\Sigma_2$}}}

			\put(160,65){{\parbox{0.55\linewidth}{$ 
					Y$}}}
				
			\put(230,65){{\parbox{0.55\linewidth}{$ 
						X$}}}
					
				\put(160,5){{\parbox{0.55\linewidth}{$ 
						X$}}}
			
			\put(230,5){{\parbox{0.55\linewidth}{$ 
						Y$}}}	
		
	\end{overpic}

	$\;$

		\caption{(a) Illustration of a {\it pseudo-cycle}; (b) Illustration of a {\it transient} vector field.
		 }\label{pseudo_ciclo_e_transiente}
	
\end{center}

\end{figure}

To analyze the stability of a certain type of periodic orbit, we need to define a type of Poincaré map, called the {\it first return map}. Let $Z \in \Omega$ be transient. Then, for each $p \in \Sigma_1$, there exists a unique $t_X(p) \in \mathbb{R}$ such that
\[
\varphi_X(p;t) \in \overline{\mathcal{U}^+} \quad \text{for all} \quad 
t \in [ \min\{0, t_X(p)\}, \max\{0, t_X(p)\} ],
\]
and satisfying $\varphi_X(t_X(p),p) \in \Sigma_2$. We then define
\[
\phi_X : p \in \Sigma_1 \mapsto \phi_X(p) = \varphi_X(p;t_X(p)) \in \Sigma_2,
\]
which is a diffeomorphism, since it is given by the flow of $X$ and the function $t: p \in \Sigma_1 \mapsto t_X(p) \in \mathbb{R}$ is differentiable by the Implicit Function Theorem. Proceeding analogously, we define
\[
\phi_Y : p \in \Sigma_2 \mapsto \phi_Y(p) = \varphi_Y(p;t_Y(p)) \in \Sigma_1.
\]
Finally, the {\it first return map} of $Z$ is defined as
\begin{equation}\label{aplicpoincare}
	\begin{aligned}
		\phi_Z : \; & \Sigma_2^- \rightarrow \Sigma_2^-, \\
		& p \mapsto \phi_Z(p) = \phi_X \circ \phi_Y \circ \phi_X \circ \phi_Y(p),
	\end{aligned}
\end{equation}
which is clearly a one-dimensional diffeomorphism. The flow directions of $X$ and $Y$ are selected to ensure that the composition is well-defined. In equation \eqref{aplicpoincare}, the domain of $\phi_Z$ is chosen as $\Sigma_2^-$; however, any $\Sigma_i^{\pm}$, with $i = 1,2$, could be used to define the corresponding {\it first return map}.

%\begin{defi}
%	Let $Z \in \Omega$ transient and let $\phi_Z$ be its {\it first return map} associated to $Z$ at origin. Then the origin is ``geometrically stable'' if $| \phi_Z^\prime(0)|<1$ and it is ``geometrically unstable'' if $|\phi_Z^\prime(0)|>1$.
%\end{defi}

%In this context, the {\it first return map} can not have a dynamical meaning, that is, the trajectories through  of the point $ p \in \Sigma_i ^ {\pm}, \; i = 1,2, $ may not reach the cross-section $ \Sigma_i ^ {\pm} $ again in the past or future, see Figure \ref{pseudo_ciclo_e_transiente}(b). And even in these cases, $\phi_Z$  is important to detect the existence of {\it pseudo-cycles}.
%Finally in \cite{LarTereTei2021}, the authors showed that if  $Z \in \Omega$ satisfy $X_i \, Y_i(0) >0, \; X_j \, Y_j(0) <0 \; \mbox{with} \; X_1 \, X_2(0)<0 \; \mbox{for} \; i,j = 1,2, \; i \neq j,$ then $Z$ is transient. Moreover, they also proved that the {\it firs return map} associated of $Z$ can be described as
In this context, the {\it first return map} may not have a direct dynamical interpretation; that is, the trajectories through a point $p \in \Sigma_i^{\pm}$, $i = 1,2$, may not intersect the cross-section $\Sigma_i^{\pm}$ again, either in the past or in the future (see Figure \ref{pseudo_ciclo_e_transiente}(b)). Even in such cases, $\phi_Z$ remains important for detecting the existence of {\it pseudo-cycles}. Finally, in \cite{LarSeaTei2021}, the authors showed that if $Z \in \Omega$ satisfies 
\[
X_i \, Y_i(0) > 0, \quad X_j \, Y_j(0) < 0, \quad \text{and} \quad X_1 \, X_2(0) < 0, \quad \text{for } i,j = 1,2, \; i \neq j,
\] 
then $Z$ is transient. Moreover, they proved that the {\it first return map} associated with $Z$ can be described as
%veand suppose the following expression for Poincaré map of fields $X$ and $Y$:
%\begin{equation}\label{poincareX}
%	\phi_X(x) = a_X x + b_X x^2 + c_X x^3 + \of(x^4) ,
%\end{equation}
%\begin{equation}\label{poincareY}
%	\phi_Y(y) = a_Y y + b_Y y^2 + c_Y y^3 + \of(y^4).
%\end{equation}
%Therefore by the composition in the proper way of \eqref{poincareX} and \eqref{poincareY} we obtain Poincaré map of $\phi_Z$ given by
\begin{equation}\label{poincareZ}
	\phi_Z(x) = \alpha_Z ^2 x + (\alpha_Z + \alpha_Z ^2) \beta_Z x^2 + \eta_Z x^3 + \mathcal{O}(x^4), \; x \in \Sigma_2^-
\end{equation}
%where $\alpha_Z=(X_1 \, Y_2/X_2 \, Y_1)(0)<0$. We do not specify the other constants, because they are will not useful for us in our discussions. For more details, see Section 4.2 in \cite{LarTereTei2021}.
where $\alpha_Z = (X_1 \, Y_2 / X_2 \, Y_1)(0) < 0$. The other constants are not specified, as they will not be relevant for our discussion. For more details, see Section 4.2 in \cite{LarSeaTei2021}.

%, \; \beta_Z = b_X \cdot a_Y^2 + a_X \cdot b_Y $ and 
%$$\eta_Z = -2 \left( (b_X \cdot a_Y)^2 + c_X \cdot a_Y ^3 + \left( \frac{b_Y}{a_Y} \right) ^2 + \left( \frac{c_Y}{a_Y} \right) \right).$$

%
%\begin{defi}
%	Consider $Z \in \Omega$ and $\phi_Z$ be its {\it first return map} associated to $Z$ at origin. Then the origin is geometrically stable if $0 < \phi_Z^\prime(0)<1$ and it is geometrically unstable if $\phi_Z^\prime(0)>1$.
%\end{defi}

%\section{Locally Structural Stability and Codimension one Bifurcations}\label{section3}

%In order to continue to the necessary tools to prove ours results, we will  
%enter in the conceps about stabylit considered for the family $\Omega$.

%It is well known that the set $\mathfrak{X} = \mathfrak{X}^r(\mathcal{U})$, $\overline{\mathcal{U}}$, of the germs of vector fields of class $\mathcal{C}^r$, $r \geq 1$ defined on a bounded neighborhood of the origin with the $\mathcal{C}^r$-topology is a Banach Space.  Therefore $\Omega = \mathfrak{X} \times \mathfrak{X}$ is also a Banach Space and consequently, it is a Banach Manifold.

%For our purposes, we must have a classification of structural stability and codimension one bifurcations for the vector fields. First, we will establish definitions of $\Sigma$-structural stability and codimension one bifurcations in the context of family $\Omega$. After that, we will exhibit the classification, of the last aspects, carried out by Larrosa, Teixeira, and M-Seara in \cite{LarTereTei2021}. 

For our purposes, it is necessary to have a classification of structural stability and codimension-one bifurcations for the vector fields. First, we establish the definitions of $\Sigma$-structural stability and codimension-one bifurcations within the context of the family $\Omega$. Then, we present the classification of these aspects as carried out by Larrosa, Teixeira, and M-Seara in \cite{LarSeaTei2021}.

%In the sequel, we need to do a little discussion about structural stability and bifurcations of one codimension for smooth vector fields, which will be useful in the discussion of regularization of the set of locally $\Sigma$-structurally stable vector fields $\Omega_0 \subset \Omega$ and regularization of the fields that have a generic codimension one bifurcations at origin $\Xi_1 \subset \Omega_1 = \Omega \setminus \Omega_0$. 

\begin{defi}
%Let $Z, \; \widehat{Z} \in \Omega$, defined in $\mathcal{U}$ and $\widehat{\mathcal{U}}$ limited neighborhoods of origin, with switching curve $\Sigma$ and $\widehat{\Sigma}$, respectively.  We say that $Z$ and $\widehat{Z}$ are locally $\Sigma$-equivalent if there is neighborhoods $\mathcal{U}_0$, $\widehat{\mathcal{U}_0}$ of the origin and a homeomorphism $ h : \mathcal{U}_0 \rightarrow \widehat{\mathcal{U}_0}$  preserving orientation which maps trajectories of $Z$ in trajectories of $\widehat{Z}$ and sends $\Sigma$ in $\widehat{\Sigma}$. 	We say that $Z \in \Omega$ is locally $\Sigma$-structurally stable at the origin if there is a neighborhood $\mathcal{V}_Z \subset \Omega$ of $Z$ such that if $\widehat{Z} \in \mathcal{V}_Z$ then $\widehat{Z}$ is locally $\Sigma$-equivalent to $Z$.

Let $Z, \; \widehat{Z} \in \Omega$ be defined in $\mathcal{U}$ and $\widehat{\mathcal{U}}$, which are bounded neighborhoods of the origin, with switching curves $\Sigma$ and $\widehat{\Sigma}$, respectively. We say that $Z$ and $\widehat{Z}$ are locally $\Sigma$-equivalent if there exist neighborhoods $\mathcal{U}_0 \subset \mathcal{U}$ and $\widehat{\mathcal{U}}_0 \subset \widehat{\mathcal{U}}$ of the origin and an orientation-preserving homeomorphism $h : \mathcal{U}_0 \rightarrow \widehat{\mathcal{U}}_0$ that maps trajectories of $Z$ to trajectories of $\widehat{Z}$ and sends $\Sigma$ to $\widehat{\Sigma}$. We say that $Z \in \Omega$ is locally $\Sigma$-structurally stable at the origin if there exists a neighborhood $\mathcal{V}_Z \subset \Omega$ of $Z$ such that, for any $\widehat{Z} \in \mathcal{V}_Z$, $\widehat{Z}$ is locally $\Sigma$-equivalent to $Z$.

\end{defi}

%\begin{defi}
%	We say that $Z \in \Omega$ is locally $\Sigma$-structurally stable at the origin if there is a neighborhood $\mathcal{V}_Z \subset \Omega$ of $Z$ such that if $\widehat{Z} \in \mathcal{V}_Z$ then $\widehat{Z}$ is locally $\Sigma$-equivalent to $Z$.
%\end{defi}

%It is worth noting that the last definition makes sense because it is well known that the set $\mathfrak{X} = \mathfrak{X}^r(\mathcal{U})$, $\overline{\mathcal{U}}$ compact,
% of the germs of vector fields of class $\mathcal{C}^r$, $r \geq 1$ defined on a bounded neighborhood of the origin with the $\mathcal{C}^r$-topology is a Banach Space.  Therefore $\Omega = \mathfrak{X} \times \mathfrak{X}$ is also a Banach Space. We will denote by $\Omega_0$ the set of all piecewise systems in $\Omega$ which are locally $\Sigma$-structurally
%stable.  When $Z$ is not locally $\Sigma$-structurally stable at the origin, we say that $Z$ belongs
%to the bifurcation set $\Omega_1 = \Omega \smallsetminus \Omega_0$.

It is worth noting that the previous definition is well-posed because it is well known that the set 
$\mathfrak{X} = \mathfrak{X}^r(\mathcal{U})$, $\overline{\mathcal{U}}$ compact,
of germs of $\mathcal{C}^r$ vector fields, $r \geq 1$, defined on a bounded neighborhood of the origin, endowed with the $\mathcal{C}^r$-topology, forms a Banach space. Therefore, $\Omega = \mathfrak{X} \times \mathfrak{X}$ is also a Banach space. 

We denote by $\Omega_0$ the set of all piecewise systems in $\Omega$ that are locally $\Sigma$-structurally stable. When $Z$ is not locally $\Sigma$-structurally stable at the origin, we say that $Z$ belongs to the bifurcation set 
$
\Omega_1 = \Omega \smallsetminus \Omega_0.
$

\begin{defi}
%	Let $Z \in \Omega$, then a one-parameter unfolding of $Z$ is smooth map $\gamma :   \delta \in (-\delta_0, \delta_0) \rightarrow Z_\delta \in \Omega$ with $\delta_0 \ll 1$ such that $\gamma(0) = Z_0 = Z$. We usually denote an unfolding of $Z$ by $Z_{\delta}$.
	
	Let $Z \in \Omega$. A one-parameter unfolding of $Z$ is a smooth map 
	\[
	\gamma : \delta \in (-\delta_0, \delta_0) \mapsto Z_\delta \in \Omega,
	\] 
	with $\delta_0 \ll 1$, such that $\gamma(0) = Z_0 = Z$. An unfolding of $Z$ is usually denoted by $Z_\delta$.
	
\end{defi}

\begin{defi}\label{defi15}
%	Let $Z, \; \widehat{Z} \in \Omega$. We say that two one-parameter unfoldings $Z_{\delta}$ of $Z$ and $ \widehat{Z}_{\widehat{\delta}}$ of $\widehat{Z}$ are locally weak equivalent if there is a homeomorphic change of parameters $\mu(\delta)$, such that, for each $\delta$ the vector fields $Z_{\delta}$ and $\widehat{Z}_{\mu(\delta)}$ are locally $\Sigma$-equivalent.  Moreover, given a one-parameter unfolding $Z_{\delta}$ of $Z$ it is said to be a versal one-parameter unfolding if every other unfolding $Z_{\alpha}$ of $Z$ is locally weak equivalent
%to $Z_{\delta}$. 

Let $Z, \; \widehat{Z} \in \Omega$. Two one-parameter unfoldings, $Z_{\delta}$ of $Z$ and $\widehat{Z}_{\widehat{\delta}}$ of $\widehat{Z}$, are said to be \emph{locally weakly equivalent} if there exists a homeomorphic change of parameters $\mu(\delta)$ such that, for each $\delta$, the vector fields $Z_{\delta}$ and $\widehat{Z}_{\mu(\delta)}$ are locally $\Sigma$-equivalent. Moreover, a one-parameter unfolding $Z_{\delta}$ of $Z$ is said to be a \emph{versal one-parameter unfolding} if every other unfolding $Z_{\alpha}$ of $Z$ is locally weakly equivalent to $Z_{\delta}$.

\end{defi}

%A bifurcation is a change in the topology of the phase portraits of a dynamical system when a parameter is varied, so we define a bifurcation simply in terms of the loss of structural stability as we vary a parameter. An unfolding  (or versal unfolding) of a bifurcation is a simplified system that for small values of $\delta$ we have all possible structurally stable phase portraits arising under small perturbations. The codimension of a bifurcation is the dimension of the parameter space needed to unfold the bifurcation.

A bifurcation is a change in the topology of the phase portrait of a dynamical system when a parameter is varied. Accordingly, a bifurcation can be defined in terms of the loss of structural stability as the parameter changes. An unfolding (or versal unfolding) of a bifurcation is a simplified system that captures all possible structurally stable phase portraits arising under small perturbations for sufficiently small values of $\delta$. The codimension of a bifurcation is defined as the dimension of the parameter space required to unfold it.

\begin{defi}
%A piecewise smooth vector field  $Z \in \Omega_1 $ has a singularity of codimension one at the origin if it is relatively $\Sigma$-structural stable in the induced topology of $\Omega_1 $.  That is, if there is an open set $\mathcal{V}_Z \subset \Omega$ with $Z \in \mathcal{V}_Z$ such that if $\widehat{Z} \in \mathcal{V}_Z \cap \Omega_1$ then $\widehat{Z}$ is locally $\Sigma$-equivalent
%to $Z$ and any one-parameter unfoldings of $Z$ and $\widehat{Z}$ are locally weak equivalent.  We denote by $\Xi_1$ the set of all bifurcation of codimension one in $\Omega$.

A piecewise smooth vector field $Z \in \Omega_1$ has a singularity of codimension one at the origin if it is relatively $\Sigma$-structurally stable in the induced topology of $\Omega_1$. That is, there exists an open set $\mathcal{V}_Z \subset \Omega$ with $Z \in \mathcal{V}_Z$ such that, for any $\widehat{Z} \in \mathcal{V}_Z \cap \Omega_1$, $\widehat{Z}$ is locally $\Sigma$-equivalent to $Z$, and any one-parameter unfoldings of $Z$ and $\widehat{Z}$ are locally weakly equivalent. We denote by $\Xi_1$ the set of all codimension-one bifurcations in $\Omega$.

\end{defi}
 
%Larrosa, M-Seara, and Teixeira in \cite{LarSeaTei2021}, using the previous definitions, classified the sets of the structurally stable piecewise smooth systems in $\Omega$ and $\Omega_1$ in the following results.
 
 Using the previous definitions, Larrosa, M-Seara, and Teixeira \cite{LarSeaTei2021} classified the sets of structurally stable piecewise smooth systems in $\Omega$ and $\Omega_1$ as summarized in the following results.

%So in \cite{Lar2015} has the following result about $\Sigma$-structural stability on  $\widehat{\Omega_{\pm}}$.

\begin{teo}[\cite{LarSeaTei2021}]\label{teocod0}

		$(\Sigma$-structural stability in $\Omega)$ Denote by $\Omega_0 \subset \Omega$ the set of all vector fields that are locally $\Sigma$-structurally stable near the origin. Let $Z \in \Omega$; then $Z \in \Omega_0$ if and only if $Z$  one of the following conditions holds:
	\begin{itemize}
		\item[($A_0$)] $X_i Y_i(0) > 0$, for $i=1,2$;
		
		\item[($B_0$)] $X_i Y_i(0) < 0$, for $i=1,2$, and
$
		\det Z(0) = (X_1 Y_2 - X_2 Y_1)(0) \neq 0;
$
		
		\item[($C_0$)] $X_i Y_i(0) > 0$ and $X_j Y_j(0) < 0$, for $i,j=1,2$, $i \neq j$. Moreover, if $Z$ is transient, that is, if $X_1 X_2(0) < 0$, then
$
		\alpha_Z^2 = (X_1 \, Y_2/X_2 \, Y_1)^2 (0)\neq 1.
$
	\end{itemize}
	Moreover, the subset $\Omega_0$ is open and dense in $\Omega$. Consequently, local $\Sigma$-structural stability is a generic property in $\Omega$.
\end{teo}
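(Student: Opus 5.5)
The proof splits into sufficiency (each of $(A_0)$, $(B_0)$, $(C_0)$ gives local $\Sigma$-structural stability), necessity (every $Z\in\Omega$ outside these classes can be perturbed into a non-equivalent field), and a final openness/density argument. First I would reduce everything to the local geometry at the origin. Under any of the three conditions, $X_iY_i(0)\neq0$ for $i=1,2$, so by continuity the regions (CR), (SR), (ER) on each $\Sigma_i^{\pm}$ are determined near $0$ by the signs of $X_i(0)$ and $Y_i(0)$. Neither $X$ nor $Y$ has tangency points, equilibria, or pseudo-equilibria away from the origin. The only finite list of configurations is then indexed by these signs, as in Figure~\ref{exemplos_inclusao_filippov}. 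Sufficiency then amounts to showing that every $\widehat Z$ $\mathcal C^r$-close to $Z$ realizes the same configuration, and to building the homeomorphism $h$ case by case.

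In case $(A_0)$ all four half-lines are crossing, and every orbit simply crosses $\Sigma$. I would build $h$ by flow-box rectification: match a transversal section in each quadrant, carry it along the flows of $X$ and $Y$, and glue across $\Sigma_1^\pm,\Sigma_2^\pm$, using the convention $(i_0)$ at the origin. In case $(B_0)$ all half-lines are sliding or escaping, and the origin is a fixed point by $(iii_0)$. Here $\det Z(0)\neq0$ is used: by \eqref{eqdocampodeslizanteouescape}, $Z_i^s=\pm\det Z/(Y_i-X_i)$ does not vanish near $0$, so no pseudo-equilibrium lies close to the origin and the sliding direction on each half-line is fixed. Since $\det Z(0)\neq 0$ is an open condition, this persists under perturbation, and $h$ can be built by matching sliding segments and then the regular orbits that reach them.

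In case $(C_0)$ one line is crossing and the other is sliding or escaping, and the origin is a regular point of $Z_i^s$ by $(ii_0)$. If $X_1X_2(0)>0$ there is no recurrence and the flow-box construction works as before. If $X_1X_2(0)<0$, then $Z$ is transient and the first return map \eqref{poincareZ} has linear part $\alpha_Z^2$. The hypothesis $\alpha_Z^2\neq1$ makes the origin an attracting or repelling "focus" for $\phi_Z$. The hypothesis also forces nearby $\widehat Z$ to have the same $\alpha_{\widehat Z}^2$ side of $1$, so two such return maps are conjugate by a standard fundamental-domain construction; this conjugacy is then extended to the plane along orbits.

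For necessity I would run through the complement of these classes. If some $X_i(0)Y_i(0)=0$, a perturbation moves a fold/tangency point across the origin, which changes the sign pattern on some $\Sigma_i^\pm$. If $(B_0)$ signs hold but $\det Z(0)=0$, a perturbation creates a pseudo-equilibrium on either side of $0$. If the transient $(C_0)$ signs hold but $\alpha_Z^2=1$, a perturbation changes the stability of the origin under $\phi_Z$, or creates or destroys pseudo-cycles. In each case the two perturbations are not $\Sigma$-equivalent, so $Z\notin\Omega_0$. Openness follows because all defining conditions are strict inequalities on $0$-jets. Density follows because each degenerate condition is an equality that an arbitrarily small constant perturbation of $X$ or $Y$ breaks.

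The main obstacle I expect is the transient subcase of $(C_0)$. Constructing the equivalence requires controlling the return map \eqref{poincareZ} uniformly for nearby fields and extending a conjugacy of one-dimensional maps on $\Sigma_2^-$ to an orientation-preserving homeomorphism of a full neighborhood that also respects the sliding line. I would follow the fundamental-domain method on $\Sigma_2^-$, transporting the conjugacy by $\phi_X$ and $\phi_Y$ to the other half-lines.
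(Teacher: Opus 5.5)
Your outline is correct and follows essentially the route of the source \cite{LarSeaTei2021}; the present paper only quotes the result. The route is a case-by-case construction of the $\Sigma$-equivalence: $\det Z(0)\neq 0$ rules out pseudo-equilibria near the origin in $(B_0)$ (in $(C_0)$ it holds automatically from the sign conditions, which you should state explicitly), the first return map $\phi_Z$ with $\alpha_Z^2\neq 1$ handles the transient part of $(C_0)$, and sign-changing perturbations give necessity. The only difference is cosmetic: there each $Z$ is shown $\Sigma$-equivalent to a normal form, whereas you compare $Z$ directly with nearby $\widehat Z$.
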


\begin{teo}[\cite{LarSeaTei2021}]\label{teocod1}
	$(\Sigma$-structural stability on $\Omega_1)$ Let $Z \in \Omega_1 = \Omega \setminus  \Omega_0$. Then $Z \in \Xi_1$, that is, $Z$ has a codimension-one singularity at the oring (see Definition \ref{defi15}) if and only if $Z$ one of the following conditions holds:
	\begin{itemize}
		
		\item[($A_1$)] (Regular--fold bifurcation) The origin is a regular fold of $Z$; see Definition \ref{defi2}.
		
		\item[($B_1$)] (Double pseudo-equilibrium bifurcation) $X_i \, Y_i(0) <0$, $\det Z(0)=0$, and $(\det Z)_ {x_i}(0)$ $ \neq 0$, for $i=1,2$.
			
		\item[($C_1$)] (\textit{Pseudo-Hopf bifurcation})
		$X_i Y_i(0) > 0$ and $X_j Y_j(0) < 0$, for $i,j=1,2$, $i \neq j$, together with $X_1 X_2(0) < 0$. In this case, $Z$ is transient and the coefficient $\alpha_Z$ of the \textit{first return map}~\eqref{poincareZ} satisfies $\alpha_Z = -1$. Moreover, the remaining coefficients satisfy $\beta_Z \neq 0$ and $\eta_Z \neq 0$.
	\end{itemize}
	Furthermore, the subset $\Xi_1$ is open and dense in $\Omega_1$. Consequently, local $\Sigma$-structural stability is a generic property in $\Omega_1$.

\end{teo}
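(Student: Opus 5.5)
The plan is to exploit that $\Omega_1=\Omega\setminus\Omega_0$ is exactly the set where one of the open conditions of Theorem~\ref{teocod0} degenerates. I will list these degeneracies, check which of them are codimension one, and then prove relative stability and versality for each surviving case. Negating $(A_0)\cup(B_0)\cup(C_0)$ leaves three possibilities for $Z\in\Omega_1$:
\begin{itemize}
\item[(a)] some product $X_iY_i(0)=0$, i.e.\ a tangency or an equilibrium at the origin;
\item[(b)] $X_iY_i(0)<0$ for $i=1,2$ and $\det Z(0)=0$;
\item[(c)] mixed signs, $X_1X_2(0)<0$ (so $Z$ is transient) and $\alpha_Z^2=1$.
\end{itemize}
In case (c), $\alpha_Z<0$ by \eqref{poincareZ}, so the condition reduces to $\alpha_Z=-1$. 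In each case the degeneracy is the zero set of a single smooth functional on the Banach space $\Omega$: $Z\mapsto X_i(0)$ (or $Y_i(0)$), $Z\mapsto \det Z(0)$, and $Z\mapsto \alpha_Z+1$. Each of these has nonvanishing derivative, which I will check by perturbing with constant vector fields (or a constant added to a single component). Hence each locus is locally a codimension-one Banach submanifold $\mathcal{B}$. Its further degenerations are zero sets of additional independent functionals, which give codimension at least two: vanishing of $X_i(0)$ and $Y_i(0)$ simultaneously, $X_1(0)=X_2(0)=0$, vanishing of the fold coefficient $X_j(X_i)_{x_j}(0)$, $(\det Z)_{x_i}(0)=0$, or $\beta_Z\eta_Z=0$. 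Removing them yields an open dense subset of $\mathcal{B}$, and this gives the density and openness claim for $\Xi_1$. It also gives the "only if" direction: a $Z$ on one of these extra loci is accumulated by fields of $\Xi_1$ that are not $\Sigma$-equivalent to it, so it cannot be relatively stable.

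For the "if" direction I will treat the three classes separately. For $(A_1)$, the fold of $X$ at the origin on $\Sigma_i$ persists in $\Omega_1$ near $Z$, and $Y$ stays transverse to $\Sigma$. I will build the homeomorphism $h$ by first matching the crossing, sliding and escaping arcs on $\Sigma_1^\pm,\Sigma_2^\pm$ (their endpoints are determined by the signs of $X_k,Y_k$ and the fold direction), and then extending along flow boxes of $X$ and $Y$ in each quadrant. For $(B_1)$, the origin lies in sliding/escaping on both branches. The condition $(\det Z)_{x_i}(0)\neq 0$ makes the pseudo-equilibria of $Z_1^s,Z_2^s$ in \eqref{eqdocampodeslizanteouescape} simultaneously sit at the origin and be hyperbolic, since $Z_i^s$ is proportional to $\det Z$ restricted to $\Sigma_i$. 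Nearby fields in $\Omega_1$ keep this configuration, so the equivalence is again built by matching the sliding dynamics. For the unfolding I take $\delta=\det Z_\delta(0)$ as parameter. The implicit function theorem then shows that the two pseudo-equilibria move off the origin along $\Sigma_1$ and $\Sigma_2$ in directions fixed by the signs $c_i$, which yields local weak equivalence of any two transversal unfoldings.

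The main obstacle is $(C_1)$. Here I will use the factorization $\phi_Z=\Phi\circ\Phi$ with $\Phi=\phi_X\circ\phi_Y$, whose derivative at $0$ is $\alpha_Z=-1$. Thus $\Phi$ is a flip-type map, and $\phi_Z$ has linear part $1$ with the normal form \eqref{poincareZ}. In an unfolding $\alpha_{Z_\delta}=-1+\delta$, I will rescale and apply the intermediate value theorem to $\phi_{Z_\delta}(x)-x$ on $\Sigma_2^-$, using $\eta_Z\neq0$ and $\beta_Z\neq0$ to control the cubic and quadratic terms. This should show that exactly one pseudo-cycle is born on one side of $\delta=0$, with stability determined by $\operatorname{sgn}\eta_Z$, while for $\delta$ on the other side none exists. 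The point needing the most care is that the equivalence must preserve the order of returns on all four half-branches $\Sigma_i^\pm$ while conjugating the return maps. I will handle this by building a conjugacy of $\phi_{Z}$ and $\phi_{\widehat Z}$ on fundamental domains of $\Sigma_2^-$, transporting it to the other branches via $\phi_X,\phi_Y$, and extending through the flow in each quadrant.
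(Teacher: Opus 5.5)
The paper gives no proof of this statement; it is imported from \cite{LarSeaTei2021}, so your outline has to stand on its own. Its architecture is the standard Sotomayor-type stratification. The factorization $\phi_Z=\Phi\circ\Phi$ with $\Phi=\phi_X\circ\phi_Y$ and $\Phi'(0)=\alpha_Z$ is correct, and it is the right tool for $(C_1)$.

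\textbf{The gap: the role of $\beta_Z$.} Writing $\Phi(x)=\alpha x+\beta x^2+\gamma x^3+\cdots$, one finds
\[
\Phi\circ\Phi(x)=\alpha^2x+(\alpha+\alpha^2)\beta x^2+\bigl(\alpha\gamma(1+\alpha^2)+2\alpha\beta^2\bigr)x^3+O(x^4).
\]
So $\beta_Z$ is simply the quadratic coefficient of $\Phi$. At $\alpha_Z=-1$ one has $\phi_Z(x)=x+\eta_Zx^3+O(x^4)$ with $\eta_Z=-2(\gamma+\beta_Z^2)$, so the quadratic term of $\phi_Z$ vanishes identically. In an unfolding with $\alpha=-1+\delta$, the nontrivial fixed points solve $-2\delta+O(\delta^2)-\delta\beta x+\eta x^2+O(x^3)=0$. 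After the rescaling $x=\sqrt{|\delta|}\,u$, the $\beta$-term is $O(|\delta|^{3/2})$ and plays no role.

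Consequences for your plan:
\begin{itemize}
\item Your stated use of $\beta_Z\neq 0$ ``to control the quadratic term'' is vacuous.
\item More seriously, neither your fundamental-domain conjugacy nor your unfolding analysis ever uses $\beta_Z$. Apply your construction verbatim to a field with $\alpha_Z=-1$, $\beta_Z=0$, $\eta_Z\neq 0$. It would show that this field is $\Sigma$-equivalent to every nearby field of $\Omega_1$ (locally $\Omega_1=\{\alpha_Z=-1\}$ there, and $\operatorname{sgn}\eta$ is constant), and that it has the same one-pseudo-cycle unfolding.
\item This contradicts your ``only if'' claim that fields with $\beta_Z\eta_Z=0$ are accumulated by non-equivalent elements of $\Xi_1$. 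For $\beta_Z=0$ you name no separating invariant, and none is visible through $\phi_Z$.
\end{itemize}
So either $\beta_Z$ enters through some feature of the $\Sigma$-equivalence or of the unfoldings in \cite{LarSeaTei2021} that your outline never identifies, or that locus cannot be excluded by your method. As it stands, the characterization of $(C_1)$ is not established.

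\textbf{Smaller points.}
\begin{itemize}
\item In case (a), your list of excluded loci omits $X_i(0)=Y_j(0)=0$ with $i\neq j$. This also violates Definition~\ref{defi2}, since the regular fold requires the other field to be transverse to both branches.
\item For $(A_1)$ you build the equivalence but never analyze the unfolding, which membership in $\Xi_1$ requires.
\item Each ``only if'' exclusion needs an explicit invariant. For instance, whether $\phi_Z$ moves points of $\Sigma_2^-$ toward or away from the origin is preserved by $\Sigma$-equivalence. This separates nearby fields with $\eta_Z>0$ from those with $\eta_Z<0$, so a field with $\eta_Z=0$ cannot be equivalent to both.
\item The intermediate value theorem gives existence of the pseudo-cycle but not uniqueness. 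Apply the implicit function theorem to $(\phi_{Z_\delta}(x)-x)/x$ in the rescaled variable $u$ instead.
\end{itemize}
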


%The authors relate a curiosity, for systems in family $\Omega$, the unfoldings of codimension one singularity do not have periodic orbits appearing near the singularity,  contrarily to what happens with the unfoldings of codimension one singularity of Filippov systems with regular switching manifold. In each case of $Z \in \Omega_0$, the authors construct the $\Sigma$-equivalence between $Z$ and its corresponding ``normal form'' which describes its dynamics. And for each case of codimension one, they show that for each $ Z \in \Xi_1$ any versal unfolding of $Z$ is locally weak equivalent to the one-parameter family, that is, there is a ``normal form'' too.

The authors point out an interesting feature of systems in the family $\Omega$: 
the unfoldings of codimension-one singularities do not give rise to periodic orbits in a neighborhood of the singularity, in contrast to what occurs for unfoldings of codimension-one singularities in Filippov systems with a regular switching manifold. For each case $Z \in \Omega_0$, the authors construct a $\Sigma$-equivalence between $Z$ and a corresponding \emph{normal form} that captures its local dynamics. Moreover, for each codimension-one case, they show that for any $Z \in \Xi_1$, every versal unfolding of $Z$ is locally weakly equivalent to a one-parameter family. In particular, a corresponding \emph{normal form} can also be defined in this setting.

%In the sequel, we need to do a little discussion about structural stability and bifurcations of one codimension for smooth vector fields, which will be useful in the discussion of regularization of the set of locally $\Sigma$-structurally stable vector fields $\Omega_0 \subset \Omega$ and regularization of the fields that have a generic codimension one bifurcations at origin $\Xi_1 \subset \Omega_1 = \Omega \setminus \Omega_0$. 

After presenting the classification of structurally stable piecewise smooth systems in $\Omega$ and $\Omega_1$, which will be used throughout this work, we now present several results and remarks that will be useful for the analysis of structural stability and codimension-one bifurcations of the smooth vector fields obtained after applying a smoothing process to the vector fields in $\Omega$ defined by equation~\eqref{sistemacruz}.

%O termo bifurcação foi introduzido por Poincaré em 1885 e é uma referência a mudança qualitativa na estrutura topológica de um retrato de fase de um sistema dinâmico, a medida que parâmetro(s) do sistema passam por valores críticos. 

%When, in a neighborhood, the solutions of two smooth differential systems are homeomorphic such that their orientations are preserved, these systems are locally topologically orbitally equivalent. This means that the solutions of one system can be continuously deformed until they become equal to the trajectories of the other system. We say that a smooth differential system is locally structurally stable if it is locally topologically orbitally equivalent to a sufficiently small perturbation in its equations. So, to conclude the local structural stability of smooth differential systems without equilibrium points in a neighborhood at the origin, we will use the classic Flow Box Theorem (see \cite{Soto1979}).
 
 Two smooth differential systems are said to be locally topologically orbitally equivalent if, in a neighborhood, there exists a homeomorphism that maps the trajectories of one system onto those of the other while preserving their orientations. In this sense, the solutions of one system can be continuously deformed into the trajectories of the other. A smooth differential system is said to be locally structurally stable if it is locally topologically orbitally equivalent to all sufficiently small perturbations of its defining equations. Therefore, in order to establish the local structural stability of smooth differential systems without equilibrium points in a neighborhood of the origin, we shall rely on the classical Flow Box Theorem (see \cite{Soto1979}).

As previously mentioned, at a bifurcation point the differential system is no longer structurally stable. 
Local bifurcations are those that can be detected by analyzing the behavior of a differential system in a neighborhood of an equilibrium point or a closed trajectory. 
In the present work, the codimension of a bifurcation is defined as the number of parameters that must be varied in order to produce the bifurcation. Therefore, we present below conditions for the occurrence of some codimension-one local bifurcations in the plane, which will be required in subsequent analyses. These conditions are stated in the Sotomayor Theorem (see \cite{Perko2013}). Before presenting this theorem, we assume that the map $g(u,\mu)$ is sufficiently smooth so that all derivatives appearing in the statement are continuous. We denote by $Dg$ the Jacobian matrix of $g$ with respect to the variable $u$, and by $g_{\mu}$ the vector of partial derivatives of $g$ with respect to the parameter $\mu$.

\begin{teo}[\cite{Perko2013}]\label{Teosotobiftransc}
	(Sotomayor Theorem) Consider a bidimensional differential system depending on a one-dimensional  parameter $ \mu $, that is, we are considering $\dot{u} = g(u, \mu)$ such that $u \in \re^2$, $\mu \in \re$ and $g$ is an map of $U \subset \re^{2+1}$ in $\re^2$.  We suppose that $g(u_0, \mu_0)=0$, for some $(u_0, \mu_0)$ in domain of $g$, and the matrix $2 \times 2$,  $A \equiv Dg(u_0, \mu_0)$, has a simple eigenvalue $\lambda = 0$ with eigenvector associated $ v $ and that $ A ^ T $ has an eigenvector $ w $ corresponding to what is also its eigenvalue $ \lambda = 0 $. In addition, suppose that 
	$A$ has another eigenvalue with either a negative or a positive real part, and that the following conditions are satisfied.
	\begin{equation}\label{terceiracond_selano}
		\begin{array}{l}
			w^T g_{\mu}(u_0, \mu_0) \neq 0 \; \mbox{and} \;
			w^T[D^2 g(u_0, \mu_0)(v,v)] \neq 0,
		\end{array} 
	\end{equation}
	then the sytem $ \dot {u} = g (u, \mu) $ suffer a saddle-node bifurcation at the  equilibrium point $ u_0 $ with the variation of  parameter $ \mu $ through the bifurcation value $ \mu = \mu_0 $. If the conditions in \eqref {terceiracond_selano} are changed to
	\begin{equation}\label{terceiracond}
		\begin{array}{l}
			w^T g_{\mu}(u_0, \mu_0) = 0, \; w^T[Dg_{\mu}(u_0, \mu_0)v] \neq 0, \; \mbox{e} \; w^T[D^2 g(u_0, \mu_0)(v,v)] \neq 0
		\end{array} 
	\end{equation}
	then the system $ \dot{u} = g (u, \mu) $ suffer a transcritical bifurcation at the equilibrium point $ u_0 $ with the variation of the parameter $ \mu $ through the bifurcation value $ \mu = \mu_0 $.
\end{teo}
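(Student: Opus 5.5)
The plan is to reduce the planar problem to a scalar equation on a center manifold. From there, the saddle--node and transcritical cases follow from an elementary analysis of the zero set of a one-dimensional function near a degenerate point. First we translate so that $(u_0,\mu_0)=(0,0)$. Since $\lambda=0$ is a simple eigenvalue of $A$ with eigenvector $v$, and $A^T w=0$, we have $w^T v\neq 0$; we normalize $w^T v=1$. Let $v_2$ be an eigenvector for the other eigenvalue $\lambda_2\neq 0$. Writing $u=x\,v+y\,v_2$, the linear part becomes $\mathrm{diag}(0,\lambda_2)$, and $w^T v_2=0$, so the $x$-coordinate of a vector $z$ is $w^T z$.

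Next we append the trivial equation $\dot\mu=0$ and obtain a three-dimensional system $(x,y,\mu)$ whose linearization at the origin has a two-dimensional center subspace $\{y=0\}$ and one hyperbolic direction. The Center Manifold Theorem gives a local invariant graph $y=h(x,\mu)$ with $h(0,0)=0$ and $Dh(0,0)=0$. By the reduction principle (Shoshitaishvili), the local phase portraits for each fixed $\mu$ are topologically equivalent to those of the reduced scalar equation
\[
\dot x=f(x,\mu)=w^T g\bigl(x v+h(x,\mu)v_2,\mu\bigr),
\]
suspended by a saddle or a node in the $y$-direction according to the sign of $\lambda_2$. By construction $f(0,0)=0$ and $f_x(0,0)=0$.

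Then we compute the low-order Taylor coefficients of $f$, using $Dh(0,0)=0$ and $w^T v_2=0$. One gets
\[
f_\mu(0,0)=w^T g_\mu(0,0),
\]
and the term $\partial_\mu h$ drops out because it enters through $w^T A v_2=0$. For the second derivatives, the $h_{xx}$ contribution is annihilated by $w^T A=0$, so
\[
f_{xx}(0,0)=w^T D^2g(0,0)(v,v),
\]
and similarly
\[
f_{x\mu}(0,0)=w^T\,[Dg_\mu(0,0)\,v].
\]

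\emph{Saddle--node case.} Here $f_\mu(0,0)\neq0$, so the Implicit Function Theorem yields the equilibrium set $\mu=m(x)$ with $m(0)=0$, $m'(0)=-f_x/f_\mu=0$ and $m''(0)=-f_{xx}/f_\mu\neq0$. This is a parabola tangent to $\mu=0$: there are two equilibria on one side of $\mu_0$, one at $\mu_0$, and none on the other side. The sign of $f_x$ along the parabola shows that one branch is stable and the other unstable in the center direction, which is the saddle--node bifurcation.

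\emph{Transcritical case.} Here $f(0,0)=f_x(0,0)=f_\mu(0,0)=0$, and we apply a Morse-type (or Newton polygon) argument to the zero set of $f$. Dividing by the nonzero $f_{xx}$, the quadratic part is
\[
\tfrac12 f_{xx}x^2+f_{x\mu}x\mu+\tfrac12 f_{\mu\mu}\mu^2,
\]
and the condition $f_{x\mu}\neq0$ is what produces two transversal branches of equilibria crossing at the origin, along which stability is exchanged. I expect the main obstacle to be this step: verifying that the discriminant $f_{x\mu}^2-f_{xx}f_{\mu\mu}$ is positive under the stated hypotheses. As in Sotomayor's and Perko's setting, this is natural when $u_0$ remains an equilibrium for all $\mu$, so that $f(0,\mu)\equiv0$ and $f=x\,F(x,\mu)$ with $F_\mu(0,0)=f_{x\mu}\neq0$. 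I would therefore first treat that factorized situation, where the Implicit Function Theorem applied to $F$ gives the second branch $\mu\approx -f_{xx}x/(2f_{x\mu})$. Only afterwards would I check how the general hypotheses guarantee the indefinite quadratic form.
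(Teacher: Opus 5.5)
The paper gives no proof of this statement; it quotes it from Perko. Taken on its own, your center-manifold reduction and the saddle--node half are sound: $f_\mu(0,0)=w^Tg_\mu(0,0)$ and $f_{xx}(0,0)=w^T[D^2g(0,0)(v,v)]$ hold exactly, and the parabola argument is fine.

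The transcritical half, however, has a genuine gap. It cannot be closed, because that half of the statement is false as formulated here. Two things go wrong.

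\emph{First problem: $Dh(0,0)=0$ fails in general.} The condition $w^Tg_\mu=0$ still allows $g_\mu(0,0)=\beta v_2$ with $\beta\neq0$. Then the center subspace of the extended system is $\{y=-(\beta/\lambda_2)\mu\}$, so $h_\mu(0,0)=-\beta/\lambda_2$. This is harmless for $f_\mu$ and $f_{xx}$, but the correct mixed derivative is
\[
f_{x\mu}(0,0)=w^T[Dg_\mu(0,0)v]+h_\mu(0,0)\,w^T[D^2g(0,0)(v,v_2)],
\]
not $w^T[Dg_\mu v]$. Take $g(u,\mu)=(\mu u_1-u_1u_2+u_1^2,\ \mu-u_2)$. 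All hypotheses in \eqref{terceiracond} hold: $A=\operatorname{diag}(0,-1)$, $v=w=(1,0)^T$, $w^Tg_\mu(0,0)=0$, $w^T[Dg_\mu v]=1$, $w^T[D^2g(v,v)]=2$. Yet the substitution $\tilde u_2=u_2-\mu$ gives $\dot u_1=u_1^2-u_1\tilde u_2$, $\dot{\tilde u}_2=-\tilde u_2$, which does not depend on $\mu$ at all. So there is no bifurcation. Indeed the center manifold is $u_2=\mu$, so $h_\mu=1$ and $f_{x\mu}=1-1=0$.

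\emph{Second problem: the discriminant is uncontrolled.} Even when $h_\mu(0,0)=0$, nothing in the hypotheses constrains $f_{\mu\mu}$, so $f_{x\mu}^2-f_{xx}f_{\mu\mu}$ can be negative. For $g(u,\mu)=(u_1^2+\mu u_1+\mu^2,\ -u_2)$ the hypotheses again hold. But $f=x^2+\mu x+\mu^2$ has discriminant $1-4<0$, and there are no equilibria near $u_0$ for $\mu\neq0$. So the step you postponed (``check how the general hypotheses guarantee the indefinite quadratic form'') is exactly where any proof must fail.

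\emph{What fixes it.} The missing ingredient is an extra hypothesis, and the natural one is the case you already treat first: $g(u_0,\mu)\equiv0$ for $\mu$ near $\mu_0$. Then:
\begin{itemize}
\item $g_\mu(u_0,\mu_0)=0$, so $\beta=0$ and $h_\mu(0,0)=0$, which makes your formula $f_{x\mu}(0,0)=w^T[Dg_\mu v]$ correct.
\item The equilibria $(0,0,\mu)$ of the extended system lie on every local center manifold, so $h(0,\mu)=0$ and $f(0,\mu)\equiv0$.
\item Hence $f=xF$ with $F_\mu(0,0)=f_{x\mu}(0,0)\neq0$ and $F_x(0,0)=\tfrac12 f_{xx}(0,0)\neq0$.
\item The Implicit Function Theorem applied to $F$ then gives the transversal second branch and the exchange of stability.
\end{itemize}
This is precisely the situation in which the paper uses the transcritical case, in Theorem~\ref{teo4}(i): there $(0,-\alpha/c_1)$ is an equilibrium for every $\alpha$, so for that application your factorized argument suffices.

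If you want a version without persistence of $u_0$, state the hypotheses as $f_{xx}(0,0)\neq0$ and $f_{x\mu}^2-f_{xx}f_{\mu\mu}>0$ for the reduced function, with $f_{x\mu}$ including the $h_\mu$ correction above.
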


%We would like to emphasize that the {\it saddle-node bifurcation}  is generic because perturbations in its equation do not cause changes in the behavior of its unfolding. However,  the {\it transcritical bifurcation} is not generic, because perturbations in its equation cause changes in the behavior of its unfolding, consequently in the dynamics of the differential equation.  Simple examples of the transcritical bifurcation and the saddle-node bifurcation, for two-dimensional differential systems with bifurcation point $(x,y,\mu)=(0,0,0)$, can be described, respectively, by
We emphasize that the \textit{saddle--node bifurcation} is generic, since small perturbations of its defining equation do not alter the qualitative behavior of its unfolding. In contrast, the \textit{transcritical bifurcation} is not generic, because arbitrarily small perturbations of its equation may change the qualitative structure of the unfolding and, consequently, the dynamics of the associated differential equation. Simple examples of the transcritical and saddle--node bifurcations for two-dimensional differential systems, with bifurcation point at $(x,y,\mu)=(0,0,0)$, are given, respectively, by  
\begin{equation}
%	\left\{\begin{array}{l}
			\dot{x_1} = \mu \, x_1 - x_1^2, \quad
			\dot{x_2} = -x_2
	%	\end{array} \right.
\;	\; \; \; \; \; \; \; \mbox{and} \; \; \; \; \; \; \; \; \; \;
%	\left\{ \begin{array}{l}
			\dot{x_1} = \mu - x_1^2, \quad
			\dot{x_2} = -x_2
	%	\end{array} \right. .
\end{equation}

\section{Proofs of Theorems \ref{teonep},  \ref{teo1}, \ref{teo2} and \ref{teo4}.}\label{section3}

%In this section, we will dedicate to proving Theorem 1.1. The proof will be an immediate
%consequence of Proposition 2.1 and from Propositions 4.1 and 4.2 that we will present the
%following.

\begin{proof}[Proof of Theorem \ref{teonep}.]
%We will only focus on one implication since the other will be immediate with such a discussion. Suppose that $Z^R_{\varepsilon, \eta}(p_0)=0$, first we are going to consider the case that 
%$\xi=0$ then with this
We will focus on only one implication, as the converse follows immediately from a similar argument. 
Suppose that $Z^R_{\varepsilon, \eta}(p_0) = 0$. 
First, we consider the case $\xi = 0$; then, with this assumption, 
	\begin{equation}\label{eqcondequi}
		Z^R_{\varepsilon, \eta}(p_0)=\left( \begin{array}{cc} 
			X_1(p_0) & Y_1(p_0)\\
			X_2(p_0) & Y_2(p_0)  
		\end{array} \right)
		\left( \begin{array}{l} 
			1 + \varphi_\varepsilon(p_{01}) \, \psi_\eta(p_{02}) \\
			1 - \varphi_\varepsilon(p_{01}) \, \psi_\eta(p_{02})
		\end{array} \right) =
		\left( \begin{array}{l} 
			0 \\
			0  
		\end{array} \right).
	\end{equation}	
%	If $det[ Z ] ( p_0) \neq0$ for any $p_0 \in \mathcal{U}$ then there is a neighborhood that preserves this property, and necessarily on it
%	
If $\det[Z](p_0) \neq 0$ for every $p_0 \in \mathcal{U}$, then there exists a neighborhood of $p_0$ in which this property is preserved; consequently, 
\begin{equation*}
		1 + \varphi_\varepsilon(x_1) \, \psi_\eta(x_2) = 0 \; \mbox{and} \; 1 -  \varphi_\varepsilon(x_1) \, \psi_\eta(x_2) = 0.
	\end{equation*}
%	causing an absurdity, that is, that there are no equilibrium points at $\mathcal{U}$ ,
%	 which contradicts the hypothesis. On the other hand, if there is $p_0 \in \mathcal{U}$  such that $det[ Z] ( p_0) =0$ then the lines of the $2 \times 2$ matrix  in \eqref{eqcondequi} are proportionals and it is sufficient to consider only one equation given by
	 This leads to a contradiction, since it implies that there are no equilibrium points in $\mathcal{U}$, 
	 which contradicts the hypothesis. On the other hand, if there exists $p_0 \in \mathcal{U}$ such that 
	 $\det[Z](p_0) = 0$, then the rows of the $2 \times 2$ matrix in \eqref{eqcondequi} are proportional, 
	 and it suffices to consider only one of the equations given by	 
	\begin{equation*}\label{eqZi}
		Z^{R_i}_{ \varepsilon , \eta }(p_0)=H_i(p_0)=	\varphi_\varepsilon(p_{10}) \, \psi_\eta(p_{20})  \,  (X_i - Y_i)(p_{10},p_{20}) + (X_i + Y_i)(p_{10},p_{20}).
	\end{equation*}
%	And, when $H_i(p_0)\neq 0$ for any $p_0$ so there is not equilibrium point,
%	and if $H_i(p_0)= 0$ then we have $p_0$ being a equilibrium point.
	% for $\xi$ sufficiently small. 
	If $H_i(p_0) \neq 0$ for any $p_0$, then there is no equilibrium point.  
	Conversely, if $H_i(p_0) = 0$, then $p_0$ is an equilibrium point.

%	Now we are going to analyze the case $\xi \neq 0$.
%	$G( \varphi_\varepsilon(p_{01}), \psi_\eta(p_{02}))\neq 0.$ 
%	Since for $\xi0$ we have the previous case, 
We now analyze the case $\xi \neq 0$.
%	Consequently, the cases without equilibrium points are preserved for $\xi$ sufficiently small. For the remaining case we have that there is $p_0 \in \mathcal{U}$  such that $det[Z( p_0)] = 0$ and $H_i(p_0)=0$. 	In this case,
%	$$F_i(x_1,x_2,\xi)=H_i(x_1,x_2)+\xi \, G_i( \varphi_\varepsilon(x_1), \psi_\eta(x_2)),$$
%	and follows that $F_i(p_{01},p_{02},\xi)=\xi \, G_i( \varphi_\varepsilon(p_{01}), \psi_\eta(p_{02}))$ which proves the result.
	Consequently, the cases without equilibrium points are preserved for sufficiently small $\xi$.  
	For the remaining case, there exists $p_0 \in \mathcal{U}$ such that $\det[Z(p_0)] = 0$ and $H_i(p_0)=0$.  
	In this case, we define
	\[
	F_i(x_1,x_2,\xi) = H_i(x_1,x_2) + \xi \, G_i\big(\varphi_\varepsilon(x_1), \psi_\eta(x_2)\big),
	\]
	and it follows that
$
	F_i(p_{01},p_{02},\xi) = \xi \, G_i\big(\varphi_\varepsilon(p_{01}), \psi_\eta(p_{02})\big),
$
	which proves the result.

\end{proof}

\begin{proof}[Proof of Theorem \ref{teo1}.]
	
%	We must analyze nonlinear double regularization for each of the cases in Theorem \ref{teocod0} when Z is structurally stable ($Z \in \Omega_0$). The idea is to show that each nonlinear double regularization does not have equilibrium points at the origin and then the Flow Box Theorem completes the proof. 
	
	We now analyze the nonlinear double regularization for each case in Theorem~\ref{teocod0}, assuming that $Z$ is structurally stable ($Z \in \Omega_0$). The goal is to show that, in each case, the nonlinear double regularization has no equilibrium points at the origin. The proof is then completed by applying the Flow Box Theorem.
	
	%	And let us use the Remark \ref{obsdet} for Sotomayor-Teixeira double regularization generalized plus the Tubular Flow Theorem to complete the proof.
	
	%In the case of the vector field $Z$ satisfying ($B_0$) in Theorem \ref{teocod0} then $detZ(0) \neq 0$. And, for a vector field $Z$ satisfying ($C_0$) in Theorem \ref{teocod0}, if
	%
	%$$X_i(0).Y_i(0) >0, \;X_j(0).Y_j(0) <0 \; \mbox{for} \; i,j = 1,2, \; i \neq j.$$ Supposing that $detZ(0) = 0$ then $X_1(0)/Y_1(0)=X_2(0)/Y_2(0)$, that is, 
%	In the case of the vector field $Z$ satisfying ($B_0$) then $detZ(0) \neq 0$. And, for a vector field $Z$ satisfying ($C_0$), if $detZ(0) = 0$ then
%	$$\sgn (X_1(0) \, Y_1(0))=\sgn(X_1(0)/Y_1(0)) = \sgn(X_2(0)/Y_2(0))=\sgn (X_2(0) \, Y_2(0)),$$ 
%	causing an absurdity. In this way, $det Z(0) \neq 0$, regardless if the vector field is transient or not.
	In the case where the vector field $Z$ satisfies condition ($B_0$), we have $\det Z(0) \neq 0$.  
	Moreover, for a vector field $Z$ satisfying condition ($C_0$), if we assume $\det Z(0) = 0$, then
$$\sgn (X_1(0) \, Y_1(0))=\sgn(X_1(0)/Y_1(0)) = \sgn(X_2(0)/Y_2(0))=\sgn (X_2(0) \, Y_2(0)),$$ 
	which leads to a contradiction. Therefore, we conclude that $\det Z(0) \neq 0$, regardless of whether the vector field is transient or not.
	%	
	%	No caso dos sistemas em $(A)$, ou seja, $Z \in \Omega_0^1$, por hipótese $X_i(0).Y_i(0) >0$ para $i=1,2$.  

		Finally, let $Z$ be a vector field satisfying condition ($A_0$). If $\det Z(0) = 0$, consider the expression $H_i(0)=\varphi_\varepsilon(0) \, \psi_\eta(0)  \,  (X_i - Y_i)(0,0) + (X_i + Y_i)(0,0)$ for some $i=1,2.$ If $\varphi_\varepsilon(0) \, \psi_\eta(0)  \,  (X_i - Y_i)(0,0) =0$, then $H_i(0)= (X_i + Y_i)(0,0) \neq0 $ since $X_i(0) \, Y_i(0)>0$. On the other hand, if $\varphi_\varepsilon(0) \, \psi_\eta(0)  \,  (X_i - Y_i)(0,0) \neq 0$, then  $H_i(0)=0$ if and only if $\varphi \, \psi(0)=\varphi_\varepsilon(0) \, \psi_\eta(0)  =-(X_i(0)+Y_i(0))/(X_i(0)-Y_i(0))$.
% which with the hypothesis implies that 
Under the hypotheses, this would imply	$X_i(0) \, (X_i-Y_i)^{-1}(0)>0$ and $Y_i(0) \, (X_i-Y_i)^{-1}(0)<0,$ 	which contradicts the assumption. Therefore, $H_i(0) \neq 0$.

%	In this way, for each one of the cases of vector fields belonging to the family $\Omega_0$ ($\Sigma$-structurally stable),  $det Z(0) \neq 0$ or $H_i(0)\neq 0$. Therefore Theorem \ref{teonep} guarantees the nonexistence of an equilibrium point at the origin for all cases. In addition, there is a sufficiently small neighborhood of the origin that does not contain equilibrium points. And, the Flow Box Theorem guarantees the structural stability in this neighborhood. The real numbers $ \varepsilon_0 $ and $ \eta_0 $ are such that nonlinear double regularization remains in this neighborhood. And, in all cases, we have zero codimension preservation of the bifurcation for any parameter $\xi$. 
	
	In this way, for each case of vector fields belonging to the family $\Omega_0$ (i.e., $\Sigma$-structurally stable), either $\det Z(0) \neq 0$ or $H_i(0) \neq 0$. Therefore, Theorem~\ref{teonep} guarantees the nonexistence of an equilibrium point at the origin in all cases. Moreover, there exists a sufficiently small neighborhood of the origin that contains no equilibrium points. By the Flow Box Theorem, the structural stability is ensured in this neighborhood. The real numbers $\varepsilon_0$ and $\eta_0$ are chosen so that the nonlinear double regularization remains within this neighborhood. Furthermore, in all cases, the bifurcation exhibits zero codimension preservation for any parameter $\xi$.

\end{proof}

\begin{proof}[Proof of Theorem \ref{teo2}.] The idea of the proof is the same as in the proof of Theorem~\ref{teo1}. Case $(C_1)$ is analogous to case $(C_0)$. In turn, for the regular–fold bifurcation, case $(A_1)$, by hypothesis, for $i,j = 1,2$ with $i \neq j$,
	%	
	%	$\;$
	%	
	%	, we must analyze each of the three cases in Theorem \ref{teocod1}. And also let us use the Remark \ref{obsdet} for Sotomayor-Teixeira double regularization generalized plus the Tubular Flow Theorem to complete the proof of the cases that are structural stable. For the cases that have codimension one, we need to use Sotomayor Theorem (Theorem \ref{Teosotobiftransc}) in the double regularization considered.
	\begin{itemize}
		%	\item [(i)] Here, we have the cases (A) Pseudo-Hopf bifurcation and (B) Regular-fold bifurcation of the Theorem \ref{teocod1}.
		
		%	The case of Pseudo-Hopf bifurcation is analogous to the vector fields satisfying $(C)$ in Theorem \ref{teocod0}. It is the same proof, then $detZ(0) \neq 0$.
		
		\item [($A_{1}^a$)] $X_i(0)=0$, $X_j(p)\,\dfrac{\partial X_i}{\partial x_j}(p) \neq 0$, and $Y_i(0) \neq 0$, for $i=1,2$, or
		
		\item [($A_{1}^b$)] $Y_i(0)=0$, $Y_j(p)\,\dfrac{\partial Y_i}{\partial x_j}(p) \neq 0$, and $X_i(0) \neq 0$, for $i=1,2$.

	\end{itemize}
	%	Observing that $det Z(0) = (X_1 \cdot Y_2 - X_2 \cdot Y_1)(0) = ((-1)^j X_i \cdot Y_j -  (-1)^j Y_i \cdot X_j)(0),$
	%	then considering ($A_{1}^a$) it follows that  $detZ(0)=-(-1)^jY_i \cdot X_j(0) \neq 0$. On the other hand if the vector field satisfies ($A_{1}^b$), then $detZ(0)=(-1)^jX_i \cdot Y_j(0) \neq 0$. 
	%	In turn, the case ($B_1$) is analogous to case ($A_0$), where here we have $(X_2-Y_2)(0)$ never vanishing, then by the hypothesis follows that $X_i(0) \, (X_i-Y_i)^{-1}(0)<0$ or $Y_i(0) \, (X_i-Y_i)^{-1}(0)>0$ which causes an absurd. Therefore in all three cases, there are no equilibrium points.
	
	Observing that
	\[
	\det Z(0) = (X_1 Y_2 - X_2 Y_1)(0)
	= \bigl((-1)^j X_i Y_j - (-1)^j Y_i X_j\bigr)(0),
	\]
	we consider case $(A_{1}^a)$. It follows that
	\[
	\det Z(0) = -(-1)^j Y_i X_j(0) \neq 0.
	\]
	On the other hand, if the vector field satisfies $(A_{1}^b)$, then
	\[
	\det Z(0) = (-1)^j X_i Y_j(0) \neq 0.
	\]
	
	In turn, case $(B_1)$ is analogous to case $(A_0)$. Here, $(X_2 - Y_2)(0)$ never vanishes, and by the hypothesis it follows that
	\[
	X_i(0)\,(X_i - Y_i)^{-1}(0) < 0
	\quad \text{or} \quad
	Y_i(0)\,(X_i - Y_i)^{-1}(0) > 0,
	\]
	which leads to a contradiction. 
	
	Therefore, in all three cases, there are no equilibrium points.

	%		$\;$
	%		
	%		In this way, each one of the cases of vector fields suffering Pseudo-Hopf bifurcations or Regular-fold bifurcations,
	%		are satisfying the property $det Z(0) \neq 0$. Therefore, there is a sufficiently small neighborhood of the origin that does not contain equilibrium points. The Tubular Flow Theorem guarantees structural stability in this neighborhood. The real numbers $ \varepsilon_0 $ and $ \eta_0 $ are such that Sotomayor-Teixeira double regularization generalized remains in this neighborhood. Furthermore, in these cases, we do have not the preservation of the bifurcation codimension and, the result is valid for any transition functions.
	
	%		\vspace{0.3cm}
	%		\item[(ii)] Here our attention is focused on the $(C)$ Double pseudo-equilibrium bifurcation.
	
	%	\end{itemize}

%	$\;$
%	
%	
%	($A_1$) is analogous to case ($A_0$) and, in turn, the case ($B_1$) is analogous to case ($C_0$), 
%	
%	
%	which here $(X_2-Y_2)(0)$ never is vanished. However, 	
\end{proof}

%$\;$
%
%que sao as questao de ser generico que devemos ver como vamos abordar
%antes de demonstrar precisaremos de alguns lemas
%
%$\;$

%\textcolor{blue}{\begin{lemma}
%		Consider the system
%		\begin{equation}\label{eqpsuedoequilibrioduplo2}
%			\widetilde{Z_{\alpha}} = \left\{ \begin{array}{l}
%				\widetilde{X_{\alpha}}(x,y) = \left( \begin{array}{c}
%					a - b c_2 x \\ 
%					b + a \alpha
%				\end{array}  \right) , \; xy >0 \\ 
%				\widetilde{Y}(x,y) = \left( \begin{array}{c}
%					-a \\ 
%					-b + a c_1 y
%				\end{array}  \right) , \; xy <0
%			\end{array} \right.,
%		\end{equation}
%		where $a = sgn ( X_1(0) ), \; b=  sgn ( X_2(0) ), \; c_i = sgn ( \frac{\partial}{\partial x_j} det Z(0) )$ with $i,j = 1,2$ and $i \neq j$. And consider its regularization 
%		\begin{equation}\label{eq2par2} 
%			Z^R_{\varepsilon, \eta,\alpha} = \phi \left( \dfrac{x}{\varepsilon} \right)\psi \left( \dfrac{y}{\eta} \right)  \left( \frac{\widetilde{X_\alpha} - \widetilde{Y}}{2} \right) +  \left( \frac{\widetilde{X_\alpha} + \widetilde{Y}}{2} \right).
%		\end{equation}
%		If the transition function $\phi$ satisfies $\phi(0)=0$ then there are positive values $\varepsilon_0>0$ and $\eta_0>0$ such that for all $0<\varepsilon<\varepsilon_0$ and $0<\eta<\eta_0$ there is a parameter value $\alpha_0=\alpha_0(\epsilon,\eta)$ such that family \eqref{eq2par2} undergoes a transcritical bifurcation.
%\end{lemma}}

%\textcolor{blue}{

\begin{proof}[Proof of Theorem \ref{teo4}.]
	\begin{itemize}
	\item[(i)]
	Consider the nonlinear double regularization applied to the family $\widetilde{Z_{\alpha}}$ give in~\eqref{eq:pseudoequilibrio_duplo3},
		\begin{equation}\label{eq2par2} 
			Z^R_{\varepsilon, \eta,\alpha}%(x_1,x_2)
			 = \phi \left( \dfrac{x_1}{\varepsilon} \right)\psi \left( \dfrac{x_2}{\eta} \right)  \left( \frac{\widetilde{X_\alpha} - \widetilde{Y}}{2} \right) +  \left( \frac{\widetilde{X_\alpha} + \widetilde{Y}}{2} \right).
		\end{equation}
%		Under the hypothesis $\phi(0)=0$ we have that $
%		(0,-\alpha/c_1)$ is a singular point of  \eqref{eq2par2} for all $\alpha$. The linearization of \eqref{eq2par2} at $
%		(0,-\alpha/c_1)$ is
%		\[ \left(\begin{array}{cc}  a\phi^\prime(0) \, \psi(-\alpha/c_1\eta)/\varepsilon -b \, c_2/2 & 0 \\ \ast & a \, c_1/2 \end{array}\right).\]
%		We choose $\alpha_0$ as the solution of the equation
%		
		Under the hypothesis $\phi(0)=0$, we have that $(0,-\alpha/c_1)$ is a singular point of~\eqref{eq2par2} for all $\alpha$. 
		The linearization of~\eqref{eq2par2} at $(0,-\alpha/c_1)$ is
		\[
		\begin{pmatrix}
			a\,\phi'(0)\,\psi(-\alpha/(c_1\eta))/\varepsilon - b\,c_2/2 & 0 \\
			\ast & a\,c_1/2
		\end{pmatrix}.
		\]
		We choose $\alpha_0$ as the solution of the equation
		\begin{equation} \label{eqalpha0}
			\psi\left(-\frac{\alpha}{c_1\eta}\right)= \frac{b \, c_2\varepsilon}{2 \, a \, \phi^\prime(0)}.
		\end{equation}
		Observe that equation \eqref{eqalpha0} has solution only if $0<\varepsilon<2\phi^\prime(0)$. We now perform the change of variables $x_1=r$, $x_2=s-(\mu+\alpha_0)/c_1$ and $\alpha=\mu+\alpha_0$. System ~\eqref{eq2par2} becomes
		\begin{equation}\label{eq2par3} 
			Z^R_{\varepsilon, \eta,\mu} = \left( \begin{array}{cc} A_1 \, \mu & 0 \\ A_2+A_3 \, \mu-A_1 \, \mu^2 & a \, c_1/2 \end{array} \right)\left( \begin{array}{c} r \\ s \end{array} \right) + \left( \begin{array}{c} A_4 \, r^2 +\cdots \\ \cdots \end{array} \right),
		\end{equation}
		where 
		\[\begin{aligned}
			A_1  =-a \, \phi^\prime(0) \, \psi^\prime(-\alpha_0/c_1\eta)/(c_1 \varepsilon \eta),\quad A_2 & =b \, c_2 \, (\alpha_0 \, a+b)/(2 \, a), \\
			A_3 =b \, c_2/2- \phi^\prime(0) \, \psi^\prime(-\alpha_0/c_1\eta) \, (\alpha_0a+b)/(c_1\varepsilon\eta), \quad A_4 &=-a/4+b \, c_2 \, \phi^{\prime\prime}(0)/(4 \, \phi^\prime(0) \, \varepsilon).
		\end{aligned}\]
		In order to put the linear part of $Z^R_{\varepsilon, \eta,0}$ at $(r,s)=(0,0)$ into its Jordan normal form we apply the linear change of coordinates $r=-a \, c_1 \, x_1/2$ and $s=(c_2/(2 \, a)+b \, c_2 \, \alpha_0/2) \, x_1+x_2$. 	System~\eqref{eq2par3} then becomes
		\begin{equation}\label{eq2par4} 
			Z^R_{\varepsilon, \eta,\mu} = g(x_1,x_2,\mu)= \left( \begin{array}{l} A_5 \, x_1 \mu + A_6 \, x_1^2 + \cdots \\  a \, c_1 \, x_2/2 + \cdots \end{array} \right) ,
		\end{equation}
		where 
		\[\begin{aligned}
		A_5 &=-a \, \phi^\prime(0) \, \psi^\prime(-\alpha_0/c_1\eta)/(c_1 \, \varepsilon \, \eta) \phi^{\prime\prime}(0)/(4 \, \phi^\prime(0) \, \varepsilon), \\
		A_6 & =c_1/8+  c_2 \, \phi^\prime(0) \, \psi^\prime(-\alpha_0/(c_1 \, \eta)) (1+a \, b \, \alpha_0)/(2 \, \varepsilon \, \eta) -a \, b \, c_1 \, c_2 \, \phi^{\prime\prime}(0)/(8 \, \phi^\prime(0) \, \varepsilon)
		\end{aligned}\]
	%	$A_5=-a\phi^\prime(0)\psi^\prime(-\alpha_0/c_1\eta)/c_1\varepsilon\eta$ and $A_6=c_1/8+  c_2\phi^\prime(0)\psi^\prime(-\alpha_0/c_1\eta) (1+ab\alpha_0)/2\varepsilon\eta -abc_1c_2\phi^{\prime\prime}(0)/8\phi^\prime(0)\varepsilon$.
		In the notation of Theorem~\ref{Teosotobiftransc}, the eigenvector associated with the eigenvalue $0$ of the linear part of~\eqref{eq2par4} is $v^T=w^T=(1,0)$. It follows that
		\begin{equation}\label{TC}
			\begin{aligned}
				&w^Tg_\mu(0,0,0) =0,\\
				& w^TDg_\mu(0,0,0)(v) = A_5\neq0,\\
				&w^T[D^2g(0,0,0)(v,v)] = A_6\neq0,
			\end{aligned}
		\end{equation}
		for $\varepsilon$ and $\eta$ sufficiently small, that is, there exist $0<\varepsilon_0<2 \, \phi^\prime(0)$ and $\eta_0>0$ such that~\eqref{TC} are satisfied for all $0<\varepsilon<\varepsilon_0$ and $0<\eta<\eta_0$. From Theorem~\ref{Teosotobiftransc}, we conclude that system~\eqref{eq2par4} undergoes a transcritical bifurcation at the point $(x_1,x_2,\mu)=(0,0,0)$. Consequently, system~\eqref{eq2par2} undergoes a transcritical bifurcation at the point $(x_1,x_2,\alpha)=(0,-\alpha_0/c_1,\alpha_0)$.

\item[(ii)] Now consider the nonlinear double regularization applied to the family $\widehat{Z}_{\beta}$ given in~\eqref{eq:pseudoequilibrio_duplo4}. 
Under the additional hypothesis $\psi(0) = 0$, the proof is analogous to the previous one. 
Hence, under the conditions of Theorem~\ref{Teosotobiftransc}, we have
\begin{equation*}\label{TC1}
	\begin{aligned}
		&w^{T} g_{\mu}(u_0, \mu_0) = \frac{a}{2} \neq 0, \\[3pt]
		&w^{T}\!\big[D^{2} g(u_0, \mu_0)(v,v)\big] 
		= \frac{-2 b\, c_2\, \varphi'(0)\, \psi'(0)}{c_1\, \varepsilon\, \eta} \neq 0.
	\end{aligned}
\end{equation*}

	\end{itemize}
\end{proof}
%}

%Now using Theorem \ref{teocod1} we can observe three classes of the systems $\Sigma$-structural stability on bifurcation set $\Omega_1= \Omega \setminus \Omega_0$. So we going to analyze each one of those classes in order to prove our Theorem \ref{teo2}.

	\section{Aspects of an equilibrium point at the origin}\label{sectioneqorigem}

%Since it is interesting to analyze the double nonlinear regularization sufficiently close to the piecewise smooth system, it is interesting to analyze the equilibrium points that persist for all small $\eta$ and $\varepsilon$. 

Since it is important to analyze the double nonlinear regularization sufficiently close to the piecewise-smooth system, we focus on the equilibrium points that persist for all small $\eta$ and $\varepsilon$.

%From now on we will analyze cases in which the origin is an equilibrium point of the nonlinear double regularization  \eqref{eq2par1}. 
%Therefore, by Theorem \ref{teonep} we must have $det[Z](0)=0$ and $H_2(0)=0$. 
%Furthermore, the proof of Theorems \ref{teo1} and \ref{teo2} show that the only possibility of the origin being an equilibrium point of the nonlinear double regularization consists of when the piecewise smooth vector field $Z$ belongs to the subclass ($A_0$) $\subset \Omega_0$ or it is an element of the subclass ($B_1$) $\subset \Xi_1 \subset \Omega_1 = \Omega \setminus \Omega_0$. 

%Consequently, from $det[Z](0)=0$ and
%the conditons which $Z$ must to satisfy, follows that $(X_1(0),Y_1(0))= f(0) \, (X_2(0),Y_2(0))$ with $f(0)\neq0$. 
%And also, if $Z \in$ ($A_0$) or $Z \in$ ($A_1$) and the origin is an equilibrium point of $Z^R_{\varepsilon, \eta}$ then necessarily $\varphi_\varepsilon(0) \, \psi_\eta(0) \, (X_2- Y_2)(0,0) \neq 0$ or $(X_2 - Y_2)(0,0) \neq 0$, respectively. And, we also have $\varphi \, \psi(0)=\varphi_\varepsilon(0) \, \psi_\eta(0)  =-(X_2(0)+Y_2(0))/(X_2(0)-Y_2(0))$. Remember that, as $Z=(X,Y)$ then we can denote
%$$Z^T= \left( \begin{array}{ccc} 
%	X_1 &  X_2\\
%	Y_1&  Y_2
%\end{array} \right).$$

From now on, we analyze the cases in which the origin is an equilibrium point of the nonlinear double regularization~\eqref{eq2par1}. 
Therefore, by Theorem~\ref{teonep}, we must have $ \det  = 0$ and $H_2(0) = 0$. Furthermore, the proofs of Theorems~\ref{teo1} and~\ref{teo2} show that the only possibility for the origin to be an equilibrium point of the nonlinear double regularization occurs when the piecewise-smooth vector field $Z$ belongs to the subclass $(A_0) \subset \Omega_0$, or when it belongs to the subclass $(B_1) \subset \Xi_1 \subset \Omega_1 = \Omega \setminus \Omega_0$. Consequently, from $\det[Z](0) = 0$ and the conditions that $Z$ must satisfy, it follows that
\[
(X_1(0),Y_1(0)) = f(0)\,(X_2(0),Y_2(0)) \quad \text{with } f(0)\neq 0.
\]
Moreover, if $Z \in (A_0)$ or $Z \in (A_1)$ and the origin is an equilibrium point of $Z^R_{\varepsilon, \eta}$, then necessarily
\[
\varphi_\varepsilon(0)\,\psi_\eta(0)\,(X_2 - Y_2)(0,0) \neq 0 \quad \text{or} \quad (X_2 - Y_2)(0,0) \neq 0,
\]
respectively. In addition, we have
\[
\varphi \,\psi(0) = \varphi_\varepsilon(0)\,\psi_\eta(0) = -\frac{X_2(0)+Y_2(0)}{X_2(0)-Y_2(0)}.
\]
Finally, recall that, for $Z=(X,Y)$, we can denote
\[
Z^T = \begin{pmatrix} 
	X_1 & X_2 \\
	Y_1 & Y_2
\end{pmatrix}.
\]

%In this way, considering $G(W, T)=G( \varphi_\varepsilon(x_1), \psi_\eta(x_2))$,  $\triangledown \, G_i$ the gradient vector in relation to the variables $(W,T)$ for each $i=1,2$, and if $v=(v_1,v_2)$ is a vector then its orthogonal vector given by $v^\perp=(-v_2,v_1)$. So we calculate the expressions for the determinant and trace of the Jacobian matrix of $Z^R_{\varepsilon, \eta}$ in the equilibrium point at the origin, respectively, given by

In this way, considering $G(W,T)=G(\varphi_\varepsilon(x_1),\psi_\eta(x_2))$, denoting by $\nabla G_i$ the gradient with respect to the variables $(W,T)$ for each $i=1,2$, and letting $v=(v_1,v_2)$ be a vector with orthogonal vector $v^\perp = (-v_2,v_1)$, we compute the expressions for the determinant and the trace of the Jacobian matrix of $Z^R_{\varepsilon,\eta}$ at the equilibrium point at the origin, which are given by
%
%In order to find conditions for the hyperbolicity of the equilibrium point at the origin, considering $G(W, T)=G( \varphi_\varepsilon(x_1), \psi_\eta(x_2))$, we compute the expressions for the determinant and trace of the Jacobian matrix of $Z^R_{\varepsilon, \eta}(0)$ at the origin, respectively, given by
\begin{equation}
	\begin{aligned}
		det [ D Z^R_{\varepsilon, \eta}]&=   \frac{\partial \, det [ Z]}{\partial \, x_2} \,  \frac{\varphi^\prime\, \psi }{2 \, \varepsilon} - \frac{\partial  \, det [ Z]}{\partial \, x_1} \, \frac{\varphi \, \psi^\prime}{2 \, \eta}+det_{0}+det_1 \, \xi + det[DG] \,  \frac{\varphi^\prime \, \psi^\prime}{\varepsilon \, \eta} \, \xi^2 ,
	\end{aligned}
\end{equation}
such that
\begin{equation*}
	\begin{aligned}
		det_{0} & = \dfrac{Y_2^2 \, det[DX] + Y_2 \, X_2 \, (det[DX]-det[D(X+Y)] +det[DY]) + X_2^2 \, det[DY] }{(X_2-Y_2)^2}, \\
		%+ \dfrac{X_2^2}{(X_2-Y_2)^2}  \,  det[DY]  +\\
		%& \phantom{det_{00}}+ \dfrac{Y_2 \, X_2}{(X_2-Y_2)^2}  \,(det[DX]-det[D(X+Y)] +det[DY]);\\
		det_1 & =  (X_2-Y_2) \, det_{11}  \, \dfrac{\varphi^\prime \, \psi^\prime }{2 \, \varepsilon \, \eta}  +\dfrac{1}{(X_2-Y_2)} \, det_{12}   \, \dfrac{ \psi^\prime}{\varepsilon} -  \dfrac{1}{(X_2-Y_2)} \, det_{13} \, \dfrac{ \varphi^\prime}{\eta},
	\end{aligned}
\end{equation*}
with
\begin{equation*}
	\begin{aligned}
		%& \left(   f  \, \dfrac{\partial \, G_2}{\partial \, T} - \dfrac{\partial \, G_1}{\partial \, T} \right) \,  \psi-  \left(   f  \, \dfrac{\partial \, G_2}{\partial \, W} -  \dfrac{\partial \, G_1}{\partial \, W} \right) \,   \varphi, \\
		%d_{12}  \, \varphi^\prime(0) \, \psi^\prime(0) \, \varphi(0) 
		& det_{12} = \left\langle \dfrac{\partial \, G}{\partial \, W} \, , \,  \left( (Y_2 ,- X_2)  \; \; \dfrac{\partial \, Z^T}{\partial \, x_1} \right)^\perp \right\rangle, \qquad
		%&\left\langle \dfrac{\partial \, G}{\partial \, W} \, , Y_2 \, \left( \dfrac{\partial \, X}{\partial \, x_1} \right)^\perp  - X_2 \,  \left( \dfrac{\partial \, Y}{\partial \, x_1} \right)^\perp \right\rangle, \\
		det_{13} = \left\langle \dfrac{\partial \, G}{\partial \, T} \, , \,  \left( (Y_2 ,- X_2)  \; \; \dfrac{\partial \, Z^T}{\partial \, x_2} \right)^\perp \right\rangle, \\
		%& det_{13} =\left\langle \dfrac{\partial \, G}{\partial \, T} \, , Y_2 \, \left( \dfrac{\partial \, X}{\partial \, x_2} \right)^\perp  - X_2 \,  \left( \dfrac{\partial \, Y}{\partial \, x_2} \right)^\perp \right\rangle, \\
		%& \left(Y_2 \, \left\langle \dfrac{\partial \, G}{\partial \, T}, \left( \dfrac{\partial \, X}{\partial \, x_1} \right)^\perp \right\rangle- X_2 \, \left\langle \dfrac{\partial \, G}{\partial \, T}, \left( \dfrac{\partial \, Y}{\partial \, x_1} \right)^\perp \right\rangle \right)  ,
		%&+\dfrac{1}{(X_2-Y_2)} \, \dfrac{ \varphi^\prime}{\eta}, \\
		%& det_2(0) = d_{21}  \, \varphi^\prime(0) \, \psi^\prime(0).\\
		&  det_{11} = \left\langle f \,  \triangledown \, G_2 -  \triangledown \, G_1 \, , \, (\psi,\varphi)^\perp  \right\rangle, \quad
	\end{aligned}
\end{equation*}
and
\begin{equation}
	\begin{aligned}
		tr[  D Z^R_{\varepsilon, \eta}]&= f \, (X_2-Y_2)  \, \frac{\varphi^\prime\, \psi }{2 \, \varepsilon} + (X_2-Y_2)\,  \frac{\varphi \, \psi^\prime}{2 \, \eta} +tr_{0}+tr_1\, \xi,
	\end{aligned}
\end{equation}
with	
\begin{equation*}
	\begin{aligned}
		%	& tr_0= \dfrac{1}{2 \eta} \, (X_2-Y_2)\,  \varphi \, \psi^\prime +f \, (X_2-Y_2)  \,\frac{\varphi^\prime\, \psi }{2 \, \varepsilon} \\
		& tr_{0}  =  \dfrac{X_2 \, tr[DY] - Y_2  \, tr[DX] }{(X_2-Y_2)},  \qquad \qquad
		%-\dfrac{Y_2}{(X_2-Y_2)}  \, tr[DX]\\
		tr_1= \dfrac{\partial \, G_1}{\partial \, W} \,  \dfrac{\varphi^\prime}{\varepsilon} + \dfrac{\partial \, G_2}{\partial \, T}   \,  \dfrac{\psi^\prime}{\eta}.
	\end{aligned}
\end{equation*}

\vspace{0.1cm}

%The origin will be a hyperbolic equilibrium point when the real part of the eigenvalues relative to its Jacobian matrix is not zero. Given the Jacobian matriz at the origin,
%then considering its determinant ``$det [ D Z^R_{\varepsilon, \eta}](0)$'' and its trace ``$tr[ D Z^R_{\varepsilon, \eta}](0)$'' then its respective characteristic polynomial will be given by

The origin is a hyperbolic equilibrium point when the real parts of the eigenvalues of its Jacobian matrix are nonzero. 
Given the Jacobian matrix at the origin, by considering its determinant ``$det [ D Z^R_{\varepsilon, \eta}](0)$'' and its trace ``$tr[ D Z^R_{\varepsilon, \eta}](0)$'', the corresponding characteristic polynomial is given by
\begin{equation}
	\lambda^2 - tr[ D Z^R_{\varepsilon, \eta}](0) \, \lambda + det [ D Z^R_{\varepsilon, \eta}](0).
\end{equation}
%Therefore, direct calculations and the use Tables \ref{table1}-\ref{table4} prove the following result.
%The term (a.e.p) in each table means that almost every point, that is, there is a hypersurface in the three-dimensional space of parameters $(\varepsilon,\eta,\xi)$ such that the respective condition is not valid.
Therefore, direct calculations together with the use of Tables~\ref{table1}--\ref{table4} yield the following result.
The term (a.e.p.) in each table means \emph{almost every point}, that is, there exists a hypersurface in the three-dimensional parameter space $(\varepsilon,\eta,\xi)$ for which the corresponding condition does not hold.

\begin{prop}\label{propcaseA0B1}
%	Consider $Z \in$ ($A_0$) $\subset \Omega_0$ such that $det[Z](0)=0$ and $(X_2-Y_2)(0)\neq 0$ or $Z \in$ ($B_1$) $\subset \Xi_1 \subset \Omega_1 = \Omega \setminus \Omega_0$. If 
%	$\varphi \, \psi(0)
%	\notin [-1,1]$ or, respectively, $\varphi \, \psi(0)
%	\in (-1,1)$  then the  nonlinear double regularization $Z^R_{\varepsilon, \eta}$ \eqref{eq2par1} has the following properties:
%	
	Consider $Z \in (A_0) \subset \Omega_0$ such that $det[Z](0)=0$ and $(X_2 - Y_2)(0) \neq 0$, or $Z \in (B_1) \subset \Xi_1 \subset \Omega_1 = \Omega \setminus \Omega_0$. 
	If $\varphi\,\psi(0) \notin [-1,1]$ or, respectively, $\varphi\,\psi(0) \in (-1,1)$, then the nonlinear double regularization $Z^R_{\varepsilon,\eta}$ in~\eqref{eq2par1} has the following properties:
	\begin{itemize}
%		\item[(i)] the origin is a hyperbolic equilibrium point  when the next statements occurs:
%		
%		\begin{itemize}
%			\item[($i_1$)] if $D Z^R_{\varepsilon, \eta}(0)$ has real eigenvalues and satisfies
%			$(2_d)$ or  $(3_d)$ in  Table \ref{table1}, or even
%			$(1.2_d)$or  $(1.3_d)$  in Table \ref{table3} (a.e.p.);
%			
%			\item[($i_1$)]  if $D Z^R_{\varepsilon, \eta}(0)$ has complex eigenvalues and satisfies
%			$(2_t)$ or  $(3_t)$ in  Table \ref{table2}, or even
%			$(1.2_t)$or  $(1.3_t)$  in Table \ref{table4} (a.e.p.);
%			
%		\end{itemize}
			\item[(i)] The origin is a hyperbolic equilibrium point when the following statements hold:
		\begin{itemize}
			\item[($i_1$)] If $D Z^R_{\varepsilon, \eta}(0)$ has real eigenvalues and satisfies
			$(2_d)$ or $(3_d)$ in Table~\ref{table1}, or $(1.2_d)$ or $(1.3_d)$ in Table~\ref{table3} (a.e.p.);
			
			\item[($i_2$)] If $D Z^R_{\varepsilon, \eta}(0)$ has complex eigenvalues and satisfies
			$(2_t)$ or $(3_t)$ in Table~\ref{table2}, or $(1.2_t)$ or $(1.3_t)$ in Table~\ref{table4} (a.e.p.).
		\end{itemize}

%		\item[(ii)] the origin is a non-hyperbolic equilibrium point  when the next statements occurs:
%		
%		\begin{itemize}
%			\item[($ii_1$)] if $D Z^R_{\varepsilon, \eta}(0)$ has real eigenvalues and satisfies 
%			$(1_d)$ in  Table \ref{table1} with $\xi =0$ or
%			$(1.1_d)$ in Table \ref{table3} with $ det[D G](0) \, \varphi^\prime \, \psi^\prime =0$;
%			
%			\item[($ii_2$)] if $D Z^R_{\varepsilon, \eta}(0)$ has complex eigenvalues and satisfies 
%			$(1_t)$ in Table \ref{table2} with $\xi =0$ or
%			$(1.1_t)$ in  Table \ref{table4}.
%			
%		\end{itemize}
		
		\item[(ii)] 
%		The origin is a non-hyperbolic equilibrium point when the following conditions occur:
	The origin is a non-hyperbolic equilibrium point under the following conditions:
		\begin{itemize}
			\item[($ii_1$)] If $D Z^R_{\varepsilon,\eta}(0)$ has real eigenvalues and satisfies 
			$(1_d)$ in Table~\ref{table1} with $\xi = 0$, or 
			$(1.1_d)$ in Table~\ref{table3} with $ det[D G](0) \, \varphi^\prime \, \psi^\prime =0$;
			
			\item[($ii_2$)] If $D Z^R_{\varepsilon,\eta}(0)$ has complex eigenvalues and satisfies 
			$(1_t)$ in Table~\ref{table2} with $\xi = 0$, or 
			$(1.1_t)$ in Table~\ref{table4}.
		\end{itemize}
		
	\end{itemize}
	
\end{prop}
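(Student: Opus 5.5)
The plan is to reduce everything to the $2\times 2$ Jacobian $D Z^R_{\varepsilon,\eta}(0)$ and read off hyperbolicity from the characteristic polynomial $\lambda^2 - tr\,\lambda + det$. First I would confirm that the origin is an equilibrium in both situations of the statement. By Theorem~\ref{teonep} we need $\det[Z](0)=0$ and $H_2(0)=0$, together with $G(\varphi_\varepsilon(0),\psi_\eta(0))=0$ when $\xi\neq 0$. The computations in the proofs of Theorems~\ref{teo1} and~\ref{teo2} show that $H_2(0)=0$ forces
\[
\varphi\,\psi(0)=-\frac{X_2(0)+Y_2(0)}{X_2(0)-Y_2(0)} .
\]
For $(A_0)$ this value lies outside $[-1,1]$, since $X_2Y_2(0)>0$. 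For $(B_1)$ it lies in $(-1,1)$, since $X_2Y_2(0)<0$. This explains the two alternative hypotheses on the transition functions, and it gives the identity $(X_1,Y_1)(0)=f(0)\,(X_2,Y_2)(0)$ used below.

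Next I would differentiate \eqref{eq2par1} at the origin by the chain rule. The $x_1$-derivative produces $\varphi'/\varepsilon$ terms, the $x_2$-derivative produces $\psi'/\eta$ terms, and the $G$-part contributes $\xi\,\partial_W G\,\varphi'/\varepsilon$ and $\xi\,\partial_T G\,\psi'/\eta$. I would then substitute $\varphi\psi(0)$ from the identity above, so that $\tfrac12(1\pm\varphi\psi)$ become $X_2/(X_2-Y_2)$ and $-Y_2/(X_2-Y_2)$. After that I would eliminate $X_1(0),Y_1(0)$ via $f(0)$. Collecting terms by powers of $\xi$ gives the displayed formulas: $det = c_0 + det_1\,\xi + det[DG]\,\frac{\varphi'\psi'}{\varepsilon\eta}\,\xi^2$ and $tr = c_0' + tr_1\,\xi$. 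The $\xi$-independent parts contain $\partial_{x_i}\det[Z]$, $det_0$ and $tr_0$.

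With these polynomials in hand, I would classify by the sign of the discriminant $tr^2-4\,det$. In the real-eigenvalue case, the origin is hyperbolic if and only if $det\neq 0$. In the complex case, it is hyperbolic if and only if $tr\neq 0$. Since $det$ and $tr$ are polynomials in $\xi$ whose coefficients are analytic-type expressions in $(\varepsilon,\eta)$, each condition fails only on a zero set. That zero set is a hypersurface in $(\varepsilon,\eta,\xi)$ unless all coefficients vanish identically. Tables~\ref{table1}--\ref{table4} organize exactly this split. Rows $(2_d),(3_d),(1.2_d),(1.3_d)$ and their trace analogues record which leading coefficient is nonzero, giving hyperbolicity a.e.p. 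Rows $(1_d),(1_t),(1.1_d),(1.1_t)$ record the degenerate combinations in which the constant part vanishes and, with $\xi=0$ or $det[DG]\varphi'\psi'=0$, the remaining terms vanish as well, giving a zero eigenvalue or a purely imaginary pair.

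The main obstacle is the bookkeeping in the second step. One has to verify that, after using $\det[Z](0)=0$ and the value of $\varphi\psi(0)$, the cross terms really collapse into $det_0$, $det_{11}$, $det_{12}$, $det_{13}$ with the stated perpendicular-vector pairings. I would first do this with $\xi=0$, recovering the Sotomayor--Teixeira-type terms. I would then add the $G$ contributions separately, exploiting bilinearity of the $2\times2$ determinant so that the mixed terms split cleanly.
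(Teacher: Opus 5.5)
Your overall route is the paper's. You impose $\det[Z](0)=0$ and $\varphi\psi(0)=-(X_2+Y_2)/(X_2-Y_2)(0)$, compute $\det$ and $\mathrm{tr}$ of $DZ^R_{\varepsilon,\eta}(0)$, and use that a real spectrum is hyperbolic iff $\det\neq0$ and a non-real one iff $\mathrm{tr}\neq0$.

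The gap is in how you justify the table rows. Your argument is that, after multiplying by $\varepsilon\eta$, a polynomial in $(\varepsilon,\eta,\xi)$ that is not identically zero vanishes only on a hypersurface. This gives hyperbolicity a.e.p.\ in every row, but the tables assert more. In $(2_d)$, $(1.2_d)$, $(2_t)$, $(1.2_t)$ the determinant or trace does not vanish at all for small parameters; only $(3_d)$, $(1.3_d)$, $(3_t)$, $(1.3_t)$ are a.e.p. Reading the rows as recording ``which leading coefficient is nonzero'' cannot separate these cases, since in both $(2_d)$ and $(3_d)$ both singular coefficients can be nonzero. The split is on a sign.

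The missing step is a dominance-plus-sign argument. At $\xi=0$, $2\varepsilon\eta\,\det[DZ^R_{\varepsilon,\eta}(0)]=a\eta-b\varepsilon+2\,det_0\,\varepsilon\eta$, with $a=\partial_{x_2}\det[Z](0)\,\varphi'\psi$ and $b=\partial_{x_1}\det[Z](0)\,\varphi\psi'$.
\begin{itemize}
\item If only one of $a,b$ is nonzero, or if $ab<0$ (so $a\eta$ and $-b\varepsilon$ have the same sign), then for small $\varepsilon,\eta>0$ the linear part dominates and there is no zero at all.
\item If $ab>0$, there is the zero curve $a\eta-b\varepsilon+2\,det_0\,\varepsilon\eta=0$.
\end{itemize}
This is exactly the $<0$/$>0$ split in Table~\ref{table1}. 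For $\xi\neq0$ one also needs $|\xi|$ small relative to $\varepsilon\eta$, because $det_1$ and the $\xi^2$-coefficient are of order $1/(\varepsilon\eta)$. The same sign analysis applied to $\varepsilon\eta\,det_1$ and $\varepsilon\eta\,tr_1$, which are affine in $(\varepsilon,\eta)$, gives Tables~\ref{table3}--\ref{table4}. For the trace, the two singular coefficients are $f(X_2-Y_2)\varphi'\psi$ and $(X_2-Y_2)\varphi\psi'$, so the sign of $f$ enters the non-cancellation test. As written, your argument proves only the a.e.p.\ version of item (i).

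Three smaller points.
\begin{itemize}
\item You cannot \emph{confirm} from the hypotheses that the origin is an equilibrium. $\varphi\psi(0)\notin[-1,1]$ (resp.\ $\in(-1,1)$) is only the range compatible with $H_2(0)=0$. The exact equality $\varphi(0)\psi(0)=-(X_2+Y_2)/(X_2-Y_2)(0)$, and $G(\varphi(0),\psi(0))=0$ when $\xi\neq0$, are standing assumptions, as at the start of Section~\ref{sectioneqorigem}.
\item Your substitution is reversed. $\tfrac12(1+\varphi\psi)=-Y_2/(X_2-Y_2)$ is the weight of $X$, and $\tfrac12(1-\varphi\psi)=X_2/(X_2-Y_2)$ is that of $Y$. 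With your assignment, $tr_0$ and $det_0$ come out with $X$ and $Y$ interchanged.
\item In the $\xi$-linear bookkeeping the coefficient is $\det[b_1\,|\,c_2]+\det[c_1\,|\,b_2]$. Here $c_1,c_2$ are the columns of the $\xi$-free Jacobian, $b_1=\partial_W G\,\varphi'/\varepsilon$ and $b_2=\partial_T G\,\psi'/\eta$. So $\partial_W G$ meets the $x_2$-derivatives of $X,Y$ with prefactor $\varphi'/\varepsilon$, and $\partial_T G$ meets the $x_1$-derivatives with prefactor $\psi'/\eta$. Expect the displayed $det_{12}$, $det_{13}$ and their prefactors to come out with $x_1\leftrightarrow x_2$ and $\varphi'\leftrightarrow\psi'$ interchanged; this does not change the structure of the case analysis.
\end{itemize}
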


\begin{table}[]
	\begin{tabular}{|c|lc|c|c|}
		\hline
		\multirow{6}{*} {\footnotesize$\dfrac{\partial \, det [Z] }{\partial \, x_1}   \, \varphi \, \psi^\prime  = 0$} & \multicolumn{1}{l|}{\multirow{4}{*}{\footnotesize  $\dfrac{\partial \, det [Z] }{\partial \, x_2}   \, \varphi^\prime \, \psi   = 0$}} & \multirow{2}{*}{$det_{0}=0$}& \multirow{2}{*}{\footnotesize $	det [ D Z^R_{\varepsilon, \eta}] =  \mathcal{O} ( \xi )$} & \multirow{2}{*}{$(1_d)$} \\
		& \multicolumn{1}{l|}{}                  &                   &                   &                   \\ \cline{3-5} 
		& \multicolumn{1}{l|}{}                  & \multirow{2}{*}{$det_{0} \neq 0$} & \multirow{8}{*}{\footnotesize $	det [ D Z^R_{\varepsilon, \eta}] \neq 0$} & \multirow{8}{*}{$(2_d)$} \\
		& \multicolumn{1}{l|}{}                  &                   &                   &                   \\ \cline{2-3}
		& \multicolumn{2}{l|}{\multirow{2}{*}{\footnotesize  $\dfrac{\partial \, det [Z] }{\partial \, x_2}   \, \varphi^\prime \, \psi   \neq 0$}}                     &                   &                   \\
		& \multicolumn{2}{l|}{}                                      &                   &                   \\ \cline{1-3}
		\multirow{6}{*}{ \footnotesize$\dfrac{\partial \, det [Z] }{\partial \, x_1}   \, \varphi \, \psi^\prime  \neq 0$} & \multicolumn{2}{l|}{\multirow{2}{*}{\footnotesize  $\dfrac{\partial \, det [Z] }{\partial \, x_2}   \, \varphi^\prime \, \psi   = 0$}}                     &                   &                   \\
		& \multicolumn{2}{l|}{}                                      &                   &                   \\ \cline{2-3}
		& \multicolumn{1}{l|}{\multirow{4}{*}{\footnotesize  $\dfrac{\partial \, det [Z] }{\partial \, x_2}   \, \varphi^\prime \, \psi   \neq 0$}} & \multirow{2}{*}{\footnotesize $\frac{\partial \, det [Z] }{\partial \, x_1}   \, \varphi \, \psi^\prime \, \frac{\partial \, det [Z] }{\partial \, x_2}  \, \varphi^\prime \, \psi  <0$} &                   &                   \\
		& \multicolumn{1}{l|}{}                  &                   &                   &                   \\ \cline{3-5} 
		& \multicolumn{1}{l|}{}                  & \multirow{2}{*}{\footnotesize $\frac{\partial \, det [Z] }{\partial \, x_1}   \, \varphi \, \psi^\prime \, \frac{\partial \, det [Z] }{\partial \, x_2}  \, \varphi^\prime \, \psi  >0$} & \multirow{2}{*}{\footnotesize $	det [ D Z^R_{\varepsilon, \eta}] \neq 0$ (a.e.p)} & \multirow{2}{*}{$(3_d)$} \\
		& \multicolumn{1}{l|}{}                  &                   &                   &                   \\ \hline
	\end{tabular}
	\vspace{0.3cm}
	\caption{%Cases of vanishing or not of the determinant.
	Cases of vanishing or non-vanishing of the determinant.}\label{table1}
\end{table}

\begin{table}[]
	\begin{tabular}{|c|lc|c|c|}
		\hline
		\multirow{6}{*}{$  \varphi \, \psi^\prime  = 0$} & \multicolumn{1}{l|}{\multirow{4}{*}{ $\varphi^\prime \, \psi   = 0$}} & \multirow{2}{*}{$X_2 \, tr[DY]-Y_2 \, tr[DX]=0$} & \multirow{2}{*}{ $tr[ D Z^R_{\varepsilon, \eta}] =  \mathcal{O} ( \xi )$} & \multirow{2}{*}{$(1_t)$} \\
		& \multicolumn{1}{l|}{}                  &                   &                   &                   \\ \cline{3-5} 
		& \multicolumn{1}{l|}{}                  & \multirow{2}{*}{$X_2 \, tr[DY]-Y_2 \, tr[DX]\neq0$} & \multirow{8}{*}{ $tr[ D Z^R_{\varepsilon, \eta}] \neq  0$} & \multirow{8}{*}{$(2_t)$} \\
		& \multicolumn{1}{l|}{}                  &                   &                   &                   \\ \cline{2-3}
		& \multicolumn{2}{l|}{\multirow{2}{*}{ $\varphi^\prime \, \psi   \neq 0$}}                     &                   &                   \\
		& \multicolumn{2}{l|}{}                                      &                   &                   \\ \cline{1-3}
		\multirow{6}{*}{$  \varphi \, \psi^\prime   \neq 0$} & \multicolumn{2}{l|}{\multirow{2}{*}{ $\varphi^\prime \, \psi   = 0$}}                     &                   &                   \\
		& \multicolumn{2}{l|}{}                                      &                   &                   \\ \cline{2-3}
		& \multicolumn{1}{l|}{\multirow{4}{*}{$\varphi^\prime \, \psi   \neq 0$}} & \multirow{2}{*}{  $  \varphi \, \psi^\prime \,  \varphi^\prime \, \psi  < 0$} &                   &                   \\
		& \multicolumn{1}{l|}{}                  &                   &                   &                   \\ \cline{3-5} 
		& \multicolumn{1}{l|}{}                  & \multirow{2}{*}{ $  \varphi \, \psi^\prime \,  \varphi^\prime \, \psi  > 0$} & \multirow{2}{*}{ $tr[ D Z^R_{\varepsilon, \eta}] \neq  0$ (a.e.p)} & \multirow{2}{*}{$(3_t)$} \\
		& \multicolumn{1}{l|}{}                  &                   &                   &                   \\ \hline
	\end{tabular}
	\vspace{0.3cm}
	\caption{%Cases of vanishing or not of the trace.
	Cases of vanishing or nonvanishing trace.}\label{table2}
\end{table}
% Please add the following required packages to your document preamble:
% \usepackage{multirow}
\begin{table}[]
	\begin{tabular}{|llll|c|c|}
		\hline
		\multicolumn{1}{|l|}{\multirow{8}{*}{\footnotesize $det_{12}   \,  \psi^\prime=0$}} & \multicolumn{1}{l|}{\multirow{4}{*}{\footnotesize $det_{13}   \,  \varphi^\prime=0$}} & \multicolumn{2}{l|}{\multirow{2}{*}{\footnotesize $det_{11}    \,  \varphi^\prime \,  \psi^\prime=0$}}                     & \multirow{2}{*}{\footnotesize $	det [ D Z^R_{\varepsilon, \eta}] =  \mathcal{O} ( \xi^2 )$}  & \multirow{2}{*}{\footnotesize $(1.1_d)$}  \\
		\multicolumn{1}{|l|}{}                  & \multicolumn{1}{l|}{}                  & \multicolumn{2}{l|}{}                                      &                    &                    \\ \cline{3-6} 
		\multicolumn{1}{|l|}{}                  & \multicolumn{1}{l|}{}                  & \multicolumn{2}{l|}{\multirow{2}{*}{\footnotesize $det_{11}   \,  \varphi^\prime \,  \psi^\prime \neq 0$}}                     & \multirow{14}{*}{\footnotesize $	det [ D Z^R_{\varepsilon, \eta}] \neq 0 $} & \multirow{14}{*}{$(1.2_d)$} \\
		\multicolumn{1}{|l|}{}                  & \multicolumn{1}{l|}{}                  & \multicolumn{2}{l|}{}                                      &                    &                    \\ \cline{2-4}
		\multicolumn{1}{|l|}{}                  & \multicolumn{1}{l|}{\multirow{4}{*}{\footnotesize $det_{13}   \,  \varphi^\prime \neq 0$}} & \multicolumn{2}{l|}{\multirow{2}{*}{\footnotesize $det_{11}    \,  \varphi^\prime \,  \psi^\prime=0$}}                     &                    &                    \\
		\multicolumn{1}{|l|}{}                  & \multicolumn{1}{l|}{}                  & \multicolumn{2}{l|}{}                                      &                    &                    \\ \cline{3-4}
		\multicolumn{1}{|l|}{}                  & \multicolumn{1}{l|}{}                  & \multicolumn{2}{l|}{\multirow{2}{*}{\footnotesize $det_{11}    \,  \varphi^\prime \,  \psi^\prime \, det_{13}   \,  \varphi^\prime <0$}}                     &                    &                    \\
		\multicolumn{1}{|l|}{}                  & \multicolumn{1}{l|}{}                  & \multicolumn{2}{l|}{}                                      &                    &                    \\ \cline{1-4}
		\multicolumn{1}{|l|}{\multirow{4}{*}{\footnotesize $det_{12}   \,  \psi^\prime \neq 0$}} & \multicolumn{1}{l|}{\multirow{4}{*}{\footnotesize $det_{13}   \,  \varphi^\prime = 0$}} & \multicolumn{2}{l|}{\multirow{2}{*}{\footnotesize $det_{11}    \,  \varphi^\prime \,  \psi^\prime=0$}}                     &                    &                    \\
		\multicolumn{1}{|l|}{}                  & \multicolumn{1}{l|}{}                  & \multicolumn{2}{l|}{}                                      &                    &                    \\ \cline{3-4}
		\multicolumn{1}{|l|}{}                  & \multicolumn{1}{l|}{}                  & \multicolumn{2}{l|}{\multirow{2}{*}{\footnotesize $det_{11}    \,  \varphi^\prime \,  \psi^\prime \, det_{12}   \,  \psi^\prime > 0$}}                     &                    &                    \\
		\multicolumn{1}{|l|}{}                  & \multicolumn{1}{l|}{}                  & \multicolumn{2}{l|}{}                                      &                    &                    \\ \cline{1-4}
		\multicolumn{2}{|l|}{\multirow{6}{*}{\footnotesize $det_{12}   \,  \psi^\prime \, det_{13}   \,  \varphi^\prime <0$ }}                                          & \multicolumn{2}{l|}{\multirow{2}{*}{\footnotesize $det_{11}    \,  \varphi^\prime \,  \psi^\prime=0$}}                     &                    &                    \\
		\multicolumn{2}{|l|}{}                                                           & \multicolumn{2}{l|}{}                                      &                    &                    \\ \cline{3-4}
		\multicolumn{2}{|l|}{}                                                           & \multicolumn{1}{l|}{\multirow{4}{*}{\footnotesize $det_{11}    \,  \varphi^\prime \,  \psi^\prime \neq 0$}} & \multirow{2}{*}{\footnotesize $det_{11}    \,  \varphi^\prime \,  \psi^\prime \, det_{12}   \,  \psi^\prime > 0$} &                    &                    \\
		\multicolumn{2}{|l|}{}                                                           & \multicolumn{1}{l|}{}                  &                   &                    &                    \\ \cline{4-6} 
		\multicolumn{2}{|l|}{}                                                           & \multicolumn{1}{l|}{}                  & \multirow{2}{*}{\footnotesize $det_{11}    \,  \varphi^\prime \,  \psi^\prime \, det_{12}   \,  \psi^\prime < 0$} & \multirow{8}{*}{}  & \multirow{8}{*}{}  \\
		\multicolumn{2}{|l|}{}                                                           & \multicolumn{1}{l|}{}                  &                   &                    &                    \\ \cline{1-4}
		\multicolumn{4}{|l|}{\multirow{2}{*}{\footnotesize $det_{12}   \,  \psi^\prime \, det_{13}   \,  \varphi^\prime >0$}}                                                                                                       &                    &                    \\
		\multicolumn{4}{|l|}{}                                                                                                                        &                    &                    \\ \cline{1-4}
		\multicolumn{1}{|l|}{\multirow{2}{*}{\footnotesize $det_{12}   \,  \psi^\prime\neq 0$}} & \multicolumn{1}{l|}{\multirow{2}{*}{\footnotesize $det_{13}   \,  \varphi^\prime=0$}} & \multicolumn{2}{l|}{\multirow{2}{*}{\footnotesize $det_{11}    \,  \varphi^\prime \,  \psi^\prime \, det_{12}   \,  \psi^\prime < 0$}}                     &     \footnotesize    $	det [ D Z^R_{\varepsilon, \eta}] \neq 0$ (a.e.p)           &   \footnotesize    $(1.3_d)$             \\
		\multicolumn{1}{|l|}{}                  & \multicolumn{1}{l|}{}                  & \multicolumn{2}{l|}{}                                      &                    &                    \\ \cline{1-4}
		\multicolumn{1}{|l|}{\multirow{2}{*}{\footnotesize $det_{12}   \,  \psi^\prime=0$}} & \multicolumn{1}{l|}{\multirow{2}{*}{\footnotesize $det_{13}   \,  \varphi^\prime \neq 0$}} & \multicolumn{2}{l|}{\multirow{2}{*}{\footnotesize $det_{11}    \,  \varphi^\prime \,  \psi^\prime \, det_{13}   \,  \varphi^\prime > 0$}}                     &                    &                    \\
		\multicolumn{1}{|l|}{}                  & \multicolumn{1}{l|}{}                  & \multicolumn{2}{l|}{}                                      &                    &                    \\ \hline
	\end{tabular}
	\vspace{0.3cm}
	\caption{%Cases of vanishing or not of the determinant which depends on $\xi$.
	Cases in which the determinant, which depends on $\xi$, vanishes or does not vanish.
	}\label{table3}
\end{table}
% Please add the following required packages to your document preamble:
% \usepackage{multirow}
\begin{table}[]
	\begin{tabular}{|l|ll|c|c|}
		\hline
		\multirow{4}{*}{$\dfrac{\partial \, G_1}{\partial \, W} \,  \varphi^\prime=0$} & \multicolumn{2}{l|}{\multirow{2}{*}{\footnotesize$\dfrac{\partial \, G_2}{\partial \, T}   \,  \psi^\prime = 0 $}}                     & \multirow{2}{*}{$	tr [ D Z^R_{\varepsilon, \eta}] =0$} & \multirow{2}{*}{$(1.1_t)$} \\
		& \multicolumn{2}{l|}{}                                      &                   &                   \\ \cline{2-5} 
		& \multicolumn{2}{l|}{\multirow{2}{*}{\footnotesize $\dfrac{\partial \, G_2}{\partial \, T}   \,  \psi^\prime \neq 0 $}}                     & \multirow{6}{*}{} & \multirow{6}{*}{} \\
		& \multicolumn{2}{l|}{}                                      &                   &                   \\ \cline{1-3}
		\multirow{6}{*}{$ \dfrac{\partial \, G_1}{\partial \, W} \,  \varphi^\prime \neq 0$} & \multicolumn{2}{l|}{\multirow{2}{*}{\footnotesize$\dfrac{\partial \, G_2}{\partial \, T}   \,  \psi^\prime = 0 $}}                     &      $	tr[ D Z^R_{\varepsilon, \eta}] \neq 0$       &     $(1.2_t)$              \\
		& \multicolumn{2}{l|}{}                                      &                   &                   \\ \cline{2-3}
		& \multicolumn{1}{l|}{\multirow{4}{*}{\footnotesize$\dfrac{\partial \, G_2}{\partial \, T}   \,  \psi^\prime \neq 0 $}} & \multirow{2}{*}{\footnotesize $\dfrac{\partial \, G_1}{\partial \, W} \,  \varphi^\prime \, \dfrac{\partial \, G_2}{\partial \, T}   \,  \psi^\prime >0$} &                   &                   \\
		& \multicolumn{1}{l|}{}                  &                   &                   &                   \\ \cline{3-5} 
		& \multicolumn{1}{l|}{}                  & \multirow{2}{*}{\footnotesize $\dfrac{\partial \, G_1}{\partial \, W} \,  \varphi^\prime \, \dfrac{\partial \, G_2}{\partial \, T}   \,  \psi^\prime <0$} & \multirow{2}{*}{$	tr [ D Z^R_{\varepsilon, \eta}] \neq 0$ (a.e.p)} & \multirow{2}{*}{$(1.3_t)$} \\
		& \multicolumn{1}{l|}{}                  &                   &                   &                   \\ \hline
	\end{tabular}
	\vspace{0.3cm}
	\caption{%Cases of vanishing or not of the trace which depends on $\xi$.
	Cases in which the trace, which depends on $\xi$, vanishes or does not vanish.
	}\label{table4}
\end{table}

\begin{remark}
%	Observe that in the cases of Table \ref{table3} and \ref{table4} in the Proposition \ref{propcaseA0B1} the parameter $\xi$ must be null. Moreover, in the cases for (a.e.p) is always possible to choose $\xi$ that does not vanish the respective expression.
%	
Observe that, in the cases of Tables~\ref{table3} and~\ref{table4} in Proposition~\ref{propcaseA0B1}, the parameter $\xi$ must be zero. Moreover, in the cases of (a.e.p.), it is always possible to choose $\xi$ so that the respective expression does not vanish.
	
\end{remark}

\section{Examples of regularization  %of $\Sigma$-structurally stable vector fields
}\label{section4}

%At first, we will make some remarks.  Using Theorem \ref{teocod0} we have four classes of the systems $\Sigma$-structurally stable on $\Omega$,  consequently, so we had to analyze each one of those classes to prove our Theorem \ref{teo1}.

We begin with some remarks. 
Using Theorem~\ref{teocod0}, we identify four classes of systems that are $\Sigma$-structurally stable on $\Omega$. 
Consequently, we need to analyze each of these classes in order to prove Theorem~\ref{teo1}.

There are no restrictions on the transition functions, as long as they satisfy their defining conditions; that is, they must be at least continuous and satisfy the conditions in~\eqref{condfuncaotransi}. 
Thus, our next step is to apply the nonlinear double regularization~\eqref{eq2par1} to the vector fields in Figure~\ref{exemplos_inclusao_filippov}, always highlighting the trajectory through the origin. 
We will use different types of transition functions to illustrate Theorem~\ref{teo1} and to understand how this smoothing process occurs. 
First, let us consider two transition functions of Sotomayor–Teixeira for applying the nonlinear double regularization in case~(A) of Figure~\ref{exemplos_inclusao_filippov}. 
More precisely, consider the following functions:
%So our next step will be to apply the Sotomayor-Teixeira double regularization generalized 
\begin{equation}\label{casoAcod0}
	\varphi(x) = \left\{\begin{array}{r}
		\sgn(x), \; \; \mbox{if} \; \;  |x| \geq 1, \\[6pt]
		\; \; \; \; \; \; \; \; x, \; \; \mbox{if} \; \; |x| \leq 1,
	\end{array} \right.
	\; \; \mbox{and} \; \; \psi(y) = \left\{\begin{array}{r}
	   \; \; \sgn(y), \; \; \mbox{if} \; \;  |y| \geq 1, \\[6pt]
		 3 \frac{y}{2}-\frac{y^3}{2}, \; \; \mbox{if} \; \; |y| \leq 1.
	\end{array} \right.
\end{equation}
%Here $\varphi$ is a $C^0$ or continuous function with the graph in Figure \ref{transicoes} (a) being smooth in a neighborhood of the origin, and $\psi$ is a $C^1$ function with the graph in Figure \ref{transicoes} (b). In this regularization, we have a concrete structurally stable example without equilibrium points of what happens in Figure \ref{figuraregdulpa}, see the next Figure \ref{regcod0A}.
Here, $\varphi$ is a $C^0$ (continuous) function whose graph in Figure~\ref{transicoes}(a) is smooth in a neighborhood of the origin, 
and $\psi$ is a $C^1$ function with the graph shown in Figure~\ref{transicoes}(b). 
This regularization provides a concrete, structurally stable example without equilibrium points, illustrating the situation depicted in Figure~\ref{figuraregdulpa}, see also Figure~\ref{regcod0A}.
\begin{figure}[!htb]

	\begin{center}
		
		\begin{overpic}[scale=0.3,tics=5]{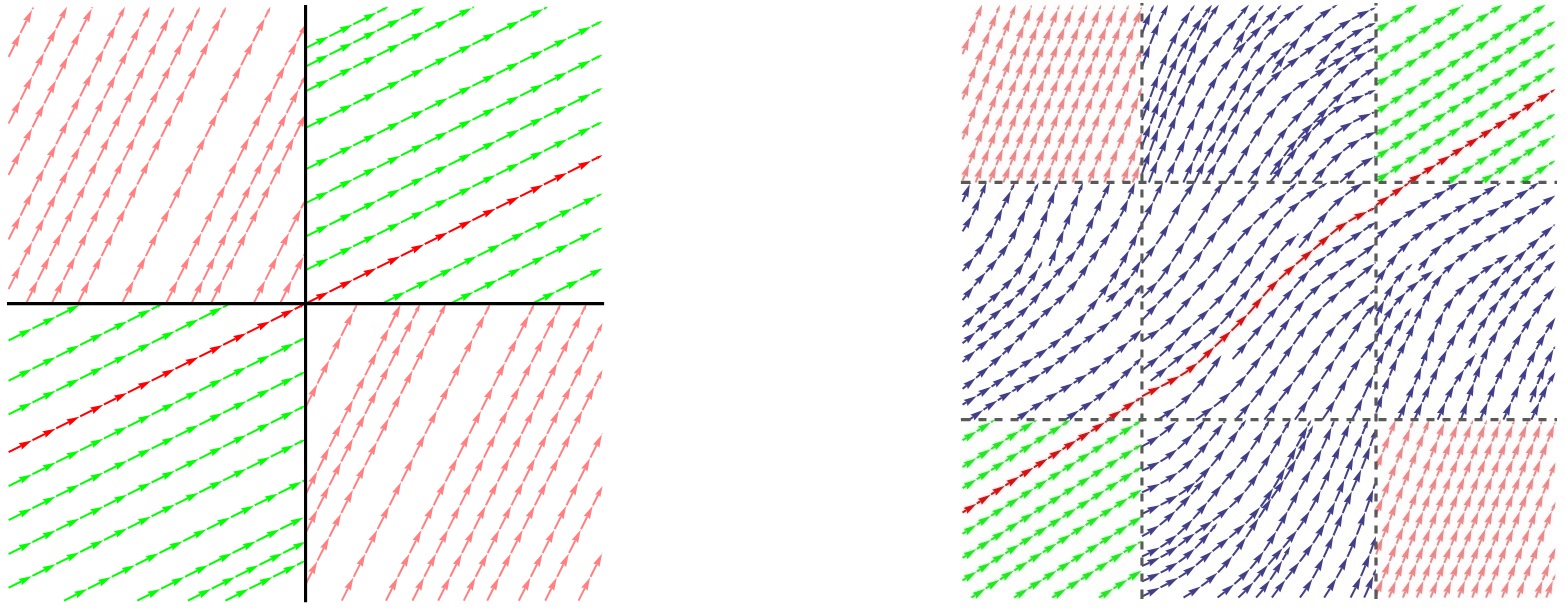}
			\vspace{0.2cm}
			
			%			\put(80,342){{(A)}}
%			\put(80,168){{(A)}}
%			
%			\put(295,168){{(B)}}
%			
%			\put(80,-13){{(C1)}}
%			
%			\put(295,-13){{(C2)}}
			%campo1
%			\put(6,300){{\parbox{0.55\linewidth}{$ 
%						\left[ \begin{array}{cc}
%							1 \\ 
%							2  \\ 
%						\end{array} \right]$}}}
%			\put(145,300){{\parbox{0.5\linewidth}{$\left[ \begin{array}{cc}
%							2 \\ 
%							1  \\ 
%						\end{array} \right]$}}}
%			\put(6,205){{\parbox{0.5\linewidth}{$\left[ \begin{array}{cc}
%							2 \\ 
%							1  \\ 
%						\end{array} \right]$}}}
%			\put(145,205){{\parbox{0.5\linewidth}{$ \left[ \begin{array}{cc}
%							1 \\ 
%							2  \\ 
%						\end{array} \right]$}}}
%

\put(165,75){{$\longrightarrow$}}		\put(165,90){{$Z_{\varepsilon, \eta}^R$}}	
		\end{overpic}

	\end{center}
	
	$\;$
	\caption{
%		Vector Field $Z$ given in Figure \ref{exemplos_inclusao_filippov} (A) and its nonlinear double regularization $Z_{\varepsilon, \eta}^R$ for $\varepsilon=0.015$, $\eta=0.012$, $\xi=0$ and transition functions in \eqref{casoAcod0}.  
	Vector field $Z$ is shown in Figure~\ref{exemplos_inclusao_filippov}(A), together with its nonlinear double regularization $Z_{\varepsilon, \eta}^R$ for $\varepsilon = 0.015$, $\eta = 0.012$, $\xi = 0$, and the transition functions given in~\eqref{casoAcod0}.
	}\label{regcod0A}
	
\end{figure}

%To exemplify other situations that can occur in the regularization process considered, we choose two transition functions that are not Sotomayor-Teixeira type and apply the nonlinear double regularization \eqref{eq2par1}  in case (B) of Figure \ref{exemplos_inclusao_filippov}. Let us consider a monotone transition function and another transition function that is not monotone, respectively, given by

To illustrate other situations that may occur in the considered regularization process, we choose two transition functions that are not of Sotomayor–Teixeira type and apply the nonlinear double regularization~\eqref{eq2par1} in case (B) of Figure~\ref{exemplos_inclusao_filippov}. 
Specifically, we consider one monotone transition function and another that is not monotone, respectively, given by
\begin{equation}\label{casoBcod0}
	\varphi_B(x) = \left\{\begin{array}{l}
		-\frac{1}{x}-1, \; \; \mbox{if} \; \;  x \leq -1/2, \\[6pt]
		\phantom{-\frac{1}{x}-} \; \; 1, \; \; \mbox{if} \; \; x \geq -1/2,
	\end{array} \right.
	\; \; \mbox{and} \; \; \psi(y) = \left\{\begin{array}{r}
		 	\sgn(y), \; \; \mbox{if} \; \;  |y| \geq 1, \\[6pt]
		 \; \widetilde{\psi}(y), \; \; \mbox{if} \; \; |y| \leq 1,
	\end{array} \right.
\end{equation}
%where $\widetilde{\psi}(y)=1/10 - (349/576) \, y - y^2/5 + (1069/288) \, y^3 + y^4/10 - (1213/576) \, y^5$. The graph of function $\varphi_B$ can be seen in Figure \ref{regcod0B} and the graph of $\psi$ in Figure \ref{transicoes} (c), where both functions are smooth in a neighborhood of the origin. For this case, the regularized vector field is structurally stable  without equilibrium points and it is an example that the vector fields $X_{\pm,\pm}$, appear only for points with a big norm, see Figure \ref{regcod0B}, that is,
where 
\[
\widetilde{\psi}(y) = \frac{1}{10} - \frac{349}{576}\,y - \frac{y^2}{5} + \frac{1069}{288}\,y^3 + \frac{y^4}{10} - \frac{1213}{576}\,y^5.
\] 
The graph of the function $\varphi_B$ is shown in Figure~\ref{regcod0B}, and the graph of $\psi$ is shown in Figure~\ref{transicoes}(c), where both functions are smooth in a neighborhood of the origin. 
In this case, the regularized vector field is structurally stable and has no equilibrium points. 
It also provides an example in which the vector fields $X_{\pm,\pm}$ appear only at points with big norm; see Figure~\ref{regcod0B}, that is,
%For this case, we have an example without equilibrium points that the vector fields $X_{\pm,\pm}$, sometimes, appear only for points with a big norm, see Figure \ref{regcod0B}, that is,
\begin{equation*}
\begin{array}{rrr}
		Z^R_{\varepsilon,+} \; \mbox{if} \; x_1 \leq -\varepsilon/2, \;  x_2 \geq \eta; & Z^R_{\varepsilon,\eta} \; \mbox{if} \; x_1 \leq -\varepsilon/2, \; |x_2| \leq \eta; & Z^R_{\varepsilon,-} \; \mbox{if} \; x_1 \leq -\varepsilon/2, \; x_2 \leq - \eta; \\[6pt]
		X \; \mbox{if} \; x_1 \geq -\varepsilon/2, \;  x_2 \geq \eta; & Z^R_{+,\eta} \; \mbox{if} \; x_1 \geq -\varepsilon/2, \; |x_2| \leq \eta; &Y \; \mbox{if} \; x_1 \geq -\varepsilon/2, \; x_2 \leq - \eta. 
	\end{array}
	\end{equation*}
\begin{figure}[!htb]

	\begin{center}
		
		\begin{overpic}[scale=0.31,tics=5]{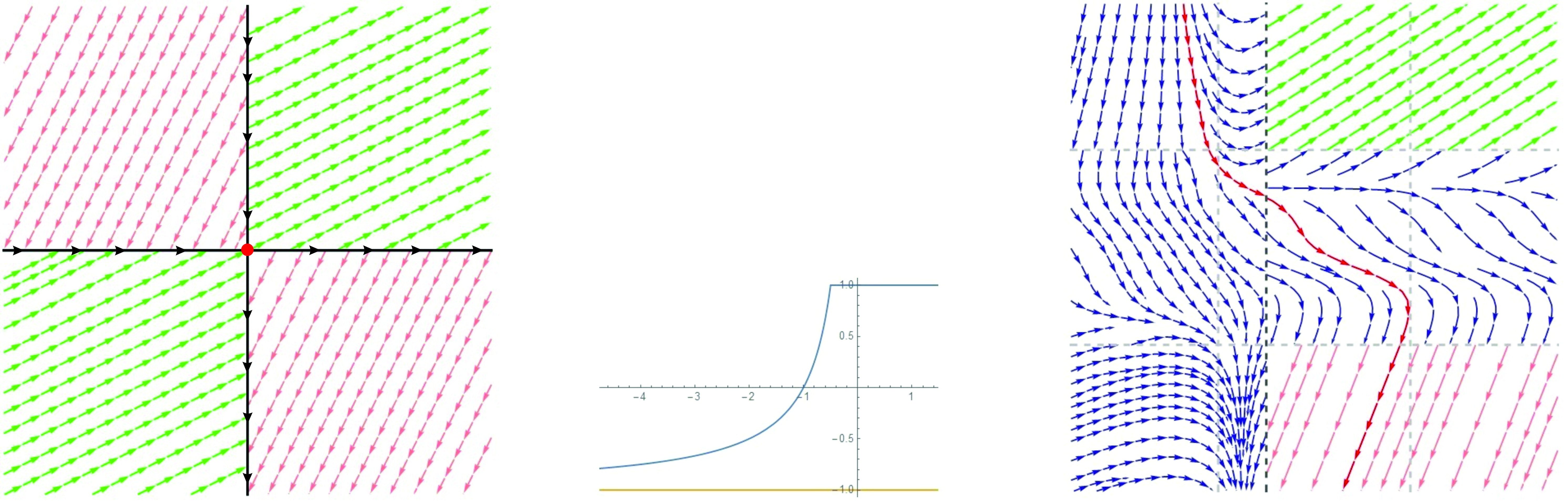}
			\vspace{0.2cm}
			
			%			\put(80,342){{(A)}}
			%			\put(80,168){{(A)}}
			%			
			%			\put(295,168){{(B)}}
			%			
			%			\put(80,-13){{(C1)}}
			%			
			%			\put(295,-13){{(C2)}}
			%campo1
			%			\put(6,300){{\parbox{0.55\linewidth}{$ 
			%						\left[ \begin{array}{cc}
			%							1 \\ 
			%							2  \\ 
			%						\end{array} \right]$}}}
			%			\put(145,300){{\parbox{0.5\linewidth}{$\left[ \begin{array}{cc}
			%							2 \\ 
			%							1  \\ 
			%						\end{array} \right]$}}}
			%			\put(6,205){{\parbox{0.5\linewidth}{$\left[ \begin{array}{cc}
			%							2 \\ 
			%							1  \\ 
			%						\end{array} \right]$}}}
			%			\put(145,205){{\parbox{0.5\linewidth}{$ \left[ \begin{array}{cc}
			%							1 \\ 
			%							2  \\ 
			%						\end{array} \right]$}}}
			%
			
			\put(210,95){{$\longrightarrow$}}		\put(210,110){{$Z_{\varepsilon, \eta}^R$}}
			
			\put(205,45){{$\varphi_B$}}	
		\end{overpic}

	\end{center}
	
	$\;$
	\caption{
%		Vector Field $Z$ given in Figure \ref{exemplos_inclusao_filippov} (B),  and its nonlinear double regularization $Z_{\varepsilon, \eta}^R$ for $\varepsilon=0.015$, $\eta=0.012$, $\xi=0$  and transition functions in \eqref{casoBcod0}.
%	
	Vector field $Z$ is shown in Figure~\ref{exemplos_inclusao_filippov}(B), together with its nonlinear double regularization $Z_{\varepsilon, \eta}^R$ for $\varepsilon = 0.015$, $\eta = 0.012$, $\xi = 0$, and the transition functions given in~\eqref{casoBcod0}.
	}\label{regcod0B}
	
\end{figure}

%For the nonlinear double regularization of Figure \ref{exemplos_inclusao_filippov} ($C_{01}$), we will choose transition functions that the vector fields $X_{\pm,\pm}$ appear only for points with a big norm. Consider the transition functions

For the nonlinear double regularization shown in Figure~\ref{exemplos_inclusao_filippov} ($C_{01}$), we choose transition functions such that the vector fields $X_{\pm,\pm}$ appear only at points with big norm. Consider the transition functions
\begin{equation}\label{casoC1cod0}
	\varphi(x) = \frac{x}{\sqrt{x^2+1}},
\; \mbox{and} \;  \psi_{C1}(y) = \left\{\begin{array}{r}
	 	\sgn(y), \; \mbox{if} \;  |y| \geq 1, \\[6pt]
	\widetilde{\psi}_{C_{01}}(y), \; \mbox{if} \; y \in [0,1], \\[6pt]
 y, \; \mbox{if} \;  y \in [-1,0], 
			\end{array} \right.
\end{equation}
%where $\widetilde{\psi}_{C_{01}}(y)= (3/2)\, y -y^3/2$. We can see the graph of $\varphi(x)$ in Figure \ref{transicoes} (d) and the graph of $\psi_{C1}(y)$ in Figure \ref{regcod0C1}. We also have regularized vector field is structurally stable  without equilibrium points. In addition, as said previously, we will only have the following vector fields in this nonlinear double regularization:
where $\widetilde{\psi}_{C_{01}}(y)= (3/2)\, y -y^3/2$.  The graph of $\varphi(x)$ is shown in Figure~\ref{transicoes}(d), and the graph of $\psi_{C_{01}}(y)$ is shown in Figure~\ref{regcod0C1}. The resulting regularized vector field is structurally stable and has no equilibrium points. In addition, as mentioned previously, only the following vector fields appear in this nonlinear double regularization:
\begin{equation*}
	\begin{array}{rrr}
		Z^R_{\varepsilon,+} \; \mbox{if} \; x_2 \geq \eta; & Z^R_{\varepsilon,\eta} \; \mbox{if} \; |x_2| \leq \eta; & Z^R_{\varepsilon,-} \; \mbox{if} \;  x_2 \leq - \eta. 
	\end{array}
\end{equation*}
\begin{figure}[!htb]

	\begin{center}
		
		\begin{overpic}[scale=0.31,tics=5]{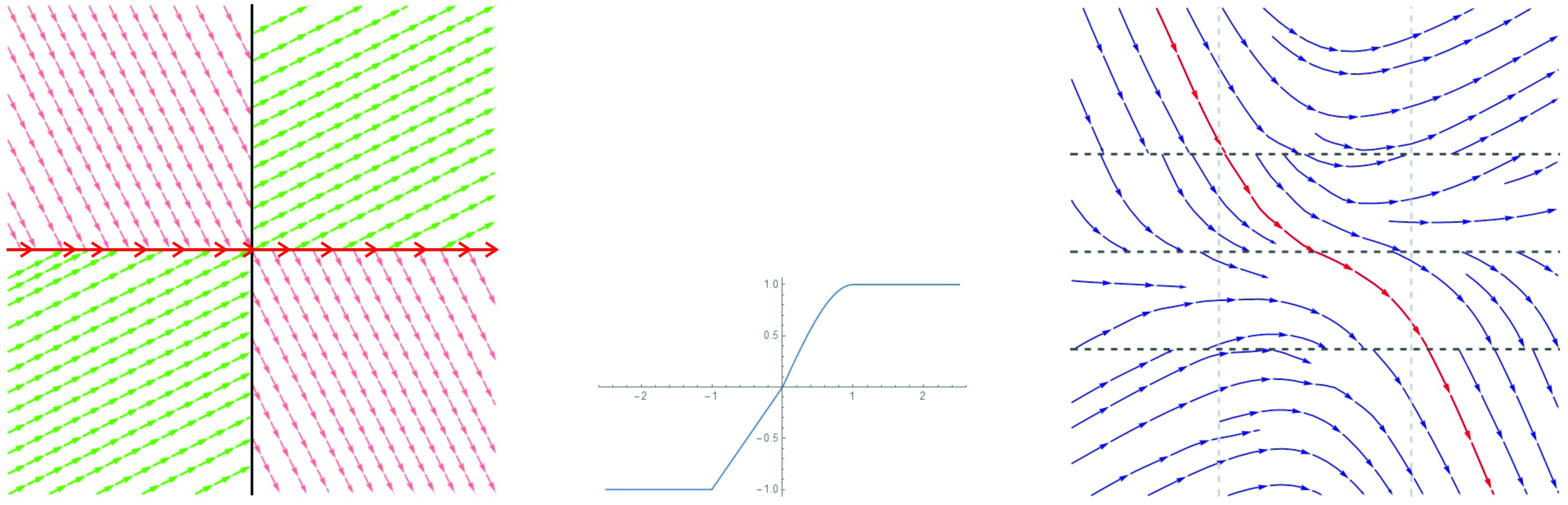}
			\vspace{0.2cm}
			
			%			\put(80,342){{(A)}}
			%			\put(80,168){{(A)}}
			%			
			%			\put(295,168){{(B)}}
			%			
			%			\put(80,-13){{(C1)}}
			%			
			%			\put(295,-13){{(C2)}}
			%campo1
			%			\put(6,300){{\parbox{0.55\linewidth}{$ 
			%						\left[ \begin{array}{cc}
			%							1 \\ 
			%							2  \\ 
			%						\end{array} \right]$}}}
			%			\put(145,300){{\parbox{0.5\linewidth}{$\left[ \begin{array}{cc}
			%							2 \\ 
			%							1  \\ 
			%						\end{array} \right]$}}}
			%			\put(6,205){{\parbox{0.5\linewidth}{$\left[ \begin{array}{cc}
			%							2 \\ 
			%							1  \\ 
			%						\end{array} \right]$}}}
			%			\put(145,205){{\parbox{0.5\linewidth}{$ \left[ \begin{array}{cc}
			%							1 \\ 
			%							2  \\ 
			%						\end{array} \right]$}}}
			%
			
			\put(215,95){{$\longrightarrow$}}		\put(215,110){{$Z_{\varepsilon, \eta}^R$}}
			
			\put(235,40){{$\psi_{C_{01}}$}}	
		\end{overpic}

	\end{center}
	
	$\;$
	\caption{
%		Vector Field $Z$ given in Figure \ref{exemplos_inclusao_filippov} ($C_{01}$),  and its nonlinear double regularization $Z_{\varepsilon, \eta}^R$ for $\varepsilon=0.015$, $\eta=0.012$, $\xi=0$  and transition functions in \eqref{casoC1cod0}.
%	
	Vector field $Z$ shown in Figure~\ref{exemplos_inclusao_filippov} ($C_{01}$) and its nonlinear double regularization $Z_{\varepsilon,\eta}^R$ with $\varepsilon=0.015$, $\eta=0.012$, $\xi=0$, and transition functions given in~\eqref{casoC1cod0}.
	}\label{regcod0C1}
	
\end{figure}

%Finishing the cases structurally stable in $\Omega$ (codimension zero) of Figure \ref{exemplos_inclusao_filippov} which preserve the codimension we will show the double regularization of the case ($C_{02}$) of Figure \ref{exemplos_inclusao_filippov}. We choose transition functions in a way that we will only have the vector field $Z^R_{\varepsilon,\eta}$, that is, we will always have the two transition functions in all regions considered in the plane, see Figure \ref{regcod0C2}. Therefore, for this purpose, we choose the following transition functions:

%Finishing the cases structurally stable in $\Omega$ (codimension zero) of Figure \ref{exemplos_inclusao_filippov} which preserve the codimension, we will show the double regularization of the case ($C_{02}$) of Figure \ref{exemplos_inclusao_filippov}. We choose transition functions in a way that we will only have the vector field $Z^R_{\varepsilon,\eta}$, that is, we will always have the vector field in all regions considered in the plane, see Figure \ref{regcod0C2}. Therefore, for this purpose, we will choose the following transition functions:

To conclude the structurally stable cases in $\Omega$ (codimension zero) shown in Figure~\ref{exemplos_inclusao_filippov} that preserve the codimension, we present the double regularization of case ($C_{02}$) in Figure~\ref{exemplos_inclusao_filippov}. We choose the transition functions so that only the vector field $Z^R_{\varepsilon,\eta}$ appears; that is, the same vector field is defined in all regions of the plane (see Figure~\ref{regcod0C2}). Therefore, for this purpose, we choose the following transition functions:
\begin{equation}\label{casoC2cod0}
	\varphi(x) = \frac{x}{\sqrt{x^2+1}},
	\; \; \mbox{and} \; \; \psi_{C_{02}}(y)= -\sgn(y) e^{-\sgn(y) y}+ \sgn(y).
\end{equation}
%The graph of $\psi_{C_{02}}(y)$ can be seen in Figure \ref{regcod0C2}.
The graph of $\psi_{C_{02}}(y)$ is shown in Figure~\ref{regcod0C2}.
\begin{figure}[!htb]

	\begin{center}
		
		\begin{overpic}[scale=0.31,tics=5]{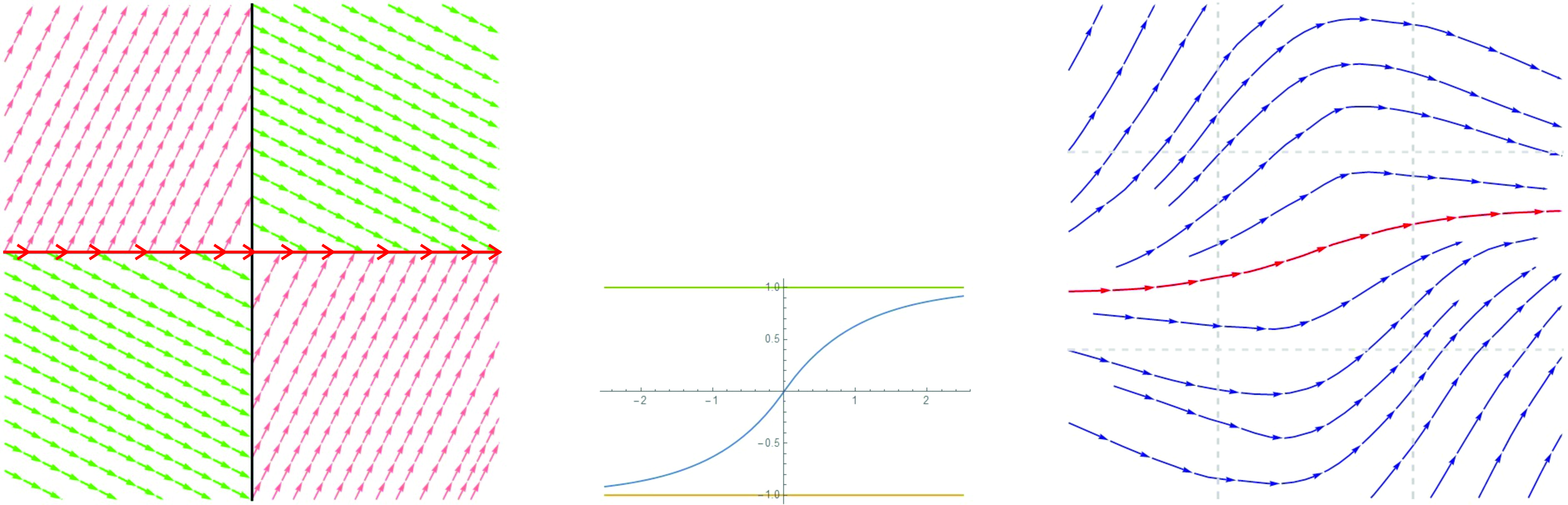}
			\vspace{0.2cm}
			
			%			\put(80,342){{(A)}}
			%			\put(80,168){{(A)}}
			%			
			%			\put(295,168){{(B)}}
			%			
			%			\put(80,-13){{(C1)}}
			%			
			%			\put(295,-13){{(C2)}}
			%campo1
			%			\put(6,300){{\parbox{0.55\linewidth}{$ 
			%						\left[ \begin{array}{cc}
			%							1 \\ 
			%							2  \\ 
			%						\end{array} \right]$}}}
			%			\put(145,300){{\parbox{0.5\linewidth}{$\left[ \begin{array}{cc}
			%							2 \\ 
			%							1  \\ 
			%						\end{array} \right]$}}}
			%			\put(6,205){{\parbox{0.5\linewidth}{$\left[ \begin{array}{cc}
			%							2 \\ 
			%							1  \\ 
			%						\end{array} \right]$}}}
			%			\put(145,205){{\parbox{0.5\linewidth}{$ \left[ \begin{array}{cc}
			%							1 \\ 
			%							2  \\ 
			%						\end{array} \right]$}}}
			%
			
			\put(215,95){{$\longrightarrow$}}		\put(215,110){{$Z_{\varepsilon, \eta}^R$}}
			
			\put(240,37){{$\psi_{C_{02}}$}}	
		\end{overpic}

	\end{center}
	
	$\;$
	\caption{
%		Vector Field $Z$ given in Figure \ref{exemplos_inclusao_filippov} ($C_{02}$),  and its nonlinear double regularization $Z_{\varepsilon, \eta}^R$ for $\varepsilon=0.015$, $\eta=0.012$, $\xi=0$  and transition functions in \eqref{casoC2cod0}. 
	Vector field $Z$ shown in Figure~\ref{exemplos_inclusao_filippov} ($C_{02}$) and its nonlinear double regularization $Z_{\varepsilon,\eta}^R$ with $\varepsilon=0.015$, $\eta=0.012$, $\xi=0$, and transition functions given in~\eqref{casoC2cod0}.
	}\label{regcod0C2}
	
\end{figure}

%Now we will show some examples in which the origin is an equilibrium point of the nonlinear double regularization of a structurally stable vector field $Z \in$ ($A_0$) $\subset \Omega_0$. The first case exhibit consists of an example of the Hopf bifurcation with codimension at least one \cite{MarMcC2012,Russ1998}. The Hopf bifurcation occurs when a complex conjugate pair of eigenvalues of the linearized system at an equilibrium point becomes purely imaginary occurring a birth of at least one limit cycle. The limit cycle birth is due to the variation of the trace of the matrix of the linearized system from zero to a number small enough.  

Now we present some examples in which the origin is an equilibrium point of the nonlinear double regularization of a structurally stable vector field $Z \in A_0 \subset \Omega_0$. The first example illustrates a Hopf bifurcation of codimension at least one \cite{MarMcC2012,Russ1998}. A Hopf bifurcation occurs when a complex conjugate pair of eigenvalues of the linearized system at an equilibrium point becomes purely imaginary, leading to the birth of at least one limit cycle. The limit cycle arises as the trace of the linearized system's matrix changes from zero to a small nonzero value.

%Inspired by the idea of the van der Pol oscillator and the discussions in Section \ref{sectioneqorigem} let us consider the piecewise vector field $Z=(X,Y)$ \eqref{sistemacruz} such that for $\mu \in \re$,

Inspired by the idea of the van der Pol oscillator and the discussions in Section \ref{sectioneqorigem}, let us consider the piecewise vector field $Z=(X,Y)$ in \eqref{sistemacruz}, for $\mu \in \mathbb{R}$,
\begin{equation}\label{sistemahopf}
X(x_1,x_2)= \left( \begin{array}{c}
-x_2 + (\mu - x_2^2) \,x_1 + 5/9\\ 
	\phantom{-}x_1 + 5/9 
\end{array}  \right) \qquad \mbox{and} \qquad
Y(x_1,x_2)= \left( \begin{array}{c}
	1 \\
	1
\end{array}  \right).
\end{equation}
% Considering the Proposition \ref{propcaseA0B1} then taking $\xi=0$, to the nonlinear double regularization of \eqref{sistemahopf} suffer a Hopf bifurcation at the origin in $\mu$ we will take the following transition functions
 Considering Proposition \ref{propcaseA0B1}, and taking $\xi=0$, the nonlinear double regularization of \eqref{sistemahopf} undergoes a Hopf bifurcation at the origin with respect to $\mu$. We will take the following transition functions:
 \begin{equation}\label{casoC1cod0}
 \varphi_1(x) = \left\{\begin{array}{r}
 \frac{1}{x}+1, \; \mbox{if} \;  x \geq 1, \\[6pt]
	\widetilde{\varphi}_1(x), \; \mbox{if} \; |x| \leq 1, \\[6pt]
 	-\frac{1}{x}-1, \; \mbox{if} \;  x \leq -1, 
 	 	\end{array} \right.
 	\; \; \mbox{and} \; \;  \psi_1(y) = \left\{\begin{array}{r}
 		1, \; \mbox{if} \;  y \geq 1, \\[6pt]
 		\widetilde{\psi}_1(y), \; \mbox{if} \; |y| \leq 1, \\[6pt]
 		-\frac{1}{y}-1, \; \mbox{if} \;  y\leq -1, 
 	\end{array} \right.
 \end{equation}
% such that its graphs are given in Figure \ref{graftranhopf},  $\widetilde{\varphi}_1(x)=2 - (3/2) \, x^2 + (5/2 ) \, x^3+ x^4/2 - (3 /2)\, x^5$ and $	\widetilde{\psi}_1(y)=7/4 - (9/4) \,  y^2 + y^3 + y^4 - y^5/2$. In this way, for $\xi = 0$, it follows that $Z^R_{\varepsilon, \eta}$ has an equilibrium point at the origin, whose eigenvalues of the linearized system are given by $9 (\mu \pm \sqrt{-4 + \mu^2})/8$. For $\mu = 0$, using the classical approach developed in~ \cite[Chap IX]{AndLeoGorMai1973}, it is possible to compute the first Lyapunov coefficient, which is given by
  such that their graphs are shown in Figure \ref{graftranhopf}, with
 $\widetilde{\varphi}_1(x)=2 - (3/2) \, x^2 + (5/2 ) \, x^3+ x^4/2 - (3 /2)\, x^5$ and $	\widetilde{\psi}_1(y)=7/4 - (9/4) \,  y^2 + y^3 + y^4 - y^5/2$.  
 In this way, for $\xi = 0$, it follows that $Z^R_{\varepsilon, \eta}$ has an equilibrium point at the origin, whose eigenvalues of the linearized system are given by
 \[
 \frac{9}{8} \bigl(\mu \pm \sqrt{-4 + \mu^2}\bigr).
 \]  
 For $\mu = 0$, using the classical approach developed in~\cite[Chap.~IX]{AndroLeontGor1973}, the first Lyapunov coefficient can be computed as
 \[
 u_3(2\pi) = \frac{\pi}{2916} \left( 
 -\frac{98 + 945 \varepsilon}{\varepsilon^4}
 - \frac{9 \, (-32 + 48 \eta + 81 \eta^4)}{\eta^4}
 \right).
 \]
%This coefficient does not vanish except along a curve in the space of regularization parameters. In other words, $u_3(2\pi) \neq 0$ for almost every point (a.e.p.), which implies that the origin is a weak focus. Therefore, for sufficiently small values of $\mu$, a Hopf bifurcation takes place (a.e.p), as illustrated in Figure~\ref{figbifhopf}.
This coefficient does not vanish except along a curve in the space of regularization parameters. In other words, $u_3(2\pi) \neq 0$ for almost every point (a.e.p.), implying that the origin is a weak focus. Therefore, for sufficiently small values of $\mu$, a Hopf bifurcation occurs for almost every point, as illustrated in Figure~\ref{figbifhopf}.

\begin{figure}[!htb]

	\begin{center}
		
		\begin{overpic}[scale=0.6,tics=5]{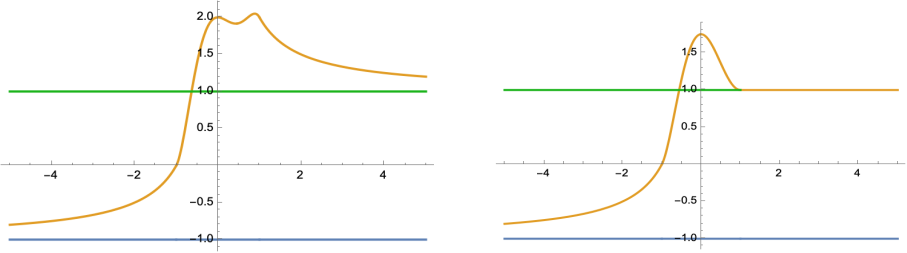}
			\vspace{0.2cm}
			
			%			\put(80,342){{(A)}}
			%			\put(80,168){{(A)}}
			%			
			%			\put(295,168){{(B)}}
			%			
			%			\put(80,-13){{(C1)}}
			%			
			%			\put(295,-13){{(C2)}}
			%campo1
			%			\put(6,300){{\parbox{0.55\linewidth}{$ 
						%						\left[ \begin{array}{cc}
							%							1 \\ 
							%							2  \\ 
							%						\end{array} \right]$}}}
			%			\put(145,300){{\parbox{0.5\linewidth}{$\left[ \begin{array}{cc}
							%							2 \\ 
							%							1  \\ 
							%						\end{array} \right]$}}}
			%			\put(6,205){{\parbox{0.5\linewidth}{$\left[ \begin{array}{cc}
							%							2 \\ 
							%							1  \\ 
							%						\end{array} \right]$}}}
			%			\put(145,205){{\parbox{0.5\linewidth}{$ \left[ \begin{array}{cc}
							%							1 \\ 
							%							2  \\ 
							%						\end{array} \right]$}}}
			%
			
			\put(350,90){{$\psi_1(y)$}}	
			
			\put(50,90){{$\varphi_1(x)$}}	
		\end{overpic}

	\end{center}
	
	$\;$
	\caption{
%		Nonmonotonous transition functions for the nonlinear double regularization of \eqref{sistemahopf} suffering a Hopf bifurcation.
	Nonmonotone transition functions for the nonlinear double regularization of \eqref{sistemahopf} undergoing a Hopf bifurcation.
	}\label{graftranhopf}
	
\end{figure}

\begin{figure}
	
	\begin{center}
		
		\begin{overpic}[scale=0.8,tics=1]{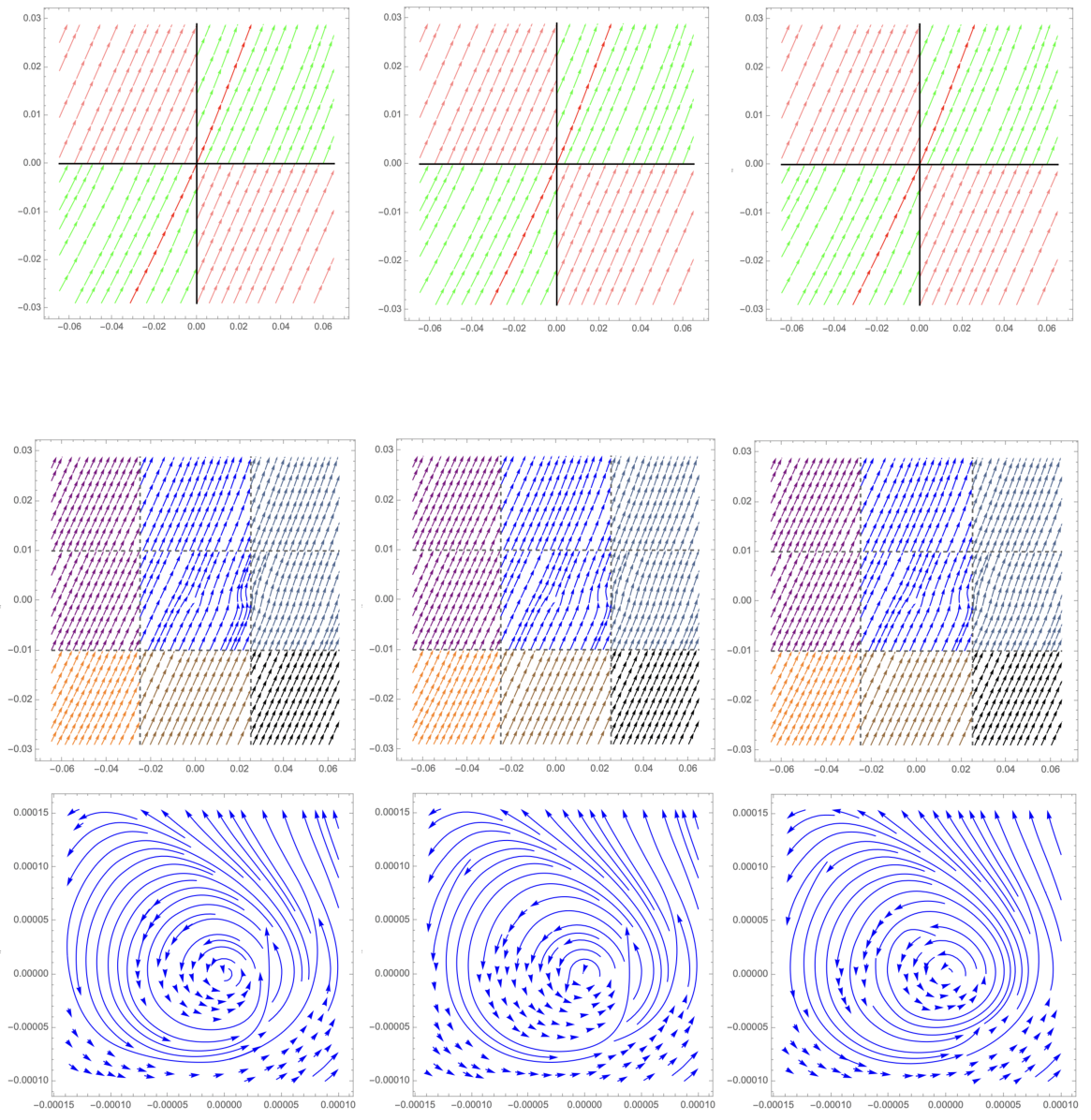}
			\vspace{0.2cm}
			
			%			\put(80,342){{(A)}}
			%			\put(80,168){{(A)}}
			%			
			%			\put(295,168){{(B)}}
			%			
			%			\put(80,-13){{(C1)}}
			%			
			%			\put(295,-13){{(C2)}}
			%campo1
			%			\put(6,300){{\parbox{0.55\linewidth}{$ 
						%						\left[ \begin{array}{cc}
							%							1 \\ 
							%							2  \\ 
							%						\end{array} \right]$}}}
			%			\put(145,300){{\parbox{0.5\linewidth}{$\left[ \begin{array}{cc}
							%							2 \\ 
							%							1  \\ 
							%						\end{array} \right]$}}}
			%			\put(6,205){{\parbox{0.5\linewidth}{$\left[ \begin{array}{cc}
							%							2 \\ 
							%							1  \\ 
							%						\end{array} \right]$}}}
			%			\put(145,205){{\parbox{0.5\linewidth}{$ \left[ \begin{array}{cc}
							%							1 \\ 
							%							2  \\ 
							%						\end{array} \right]$}}}
			%
			
			\put(227,300){{$\downarrow 	\; Z_{\varepsilon, \eta}^R$}}		
			
			\put(79,300){{$\downarrow 	\; Z_{\varepsilon, \eta}^R$}}		
			
			\put(376,300){{$\downarrow 	\; Z_{\varepsilon, \eta}^R$}}		
			
			\put(217,-12){{$\mu=0$}}		
			
			\put(42,-12){{$\mu=-1/100<0$}}

			\put(350,-12){{$\mu=1/100>0$}}		
			%	\put(240,37){{$\psi_{C_{02}}$}}	
		\end{overpic}

	\end{center}
	
	$\;$
	
	\caption{
%		Vector Field $Z$ given in \eqref{sistemahopf}  and its nonlinear double regularization $Z_{\varepsilon, \eta}^R$ suffering a Hopf bifurcation at origin in $\mu=0$ for $\varepsilon=0.025$, $\eta=0.01$, $\xi=0$  and transition functions in \eqref{graftranhopf}.
	Vector field $Z$ given in \eqref{sistemahopf} and its nonlinear double regularization $Z_{\varepsilon, \eta}^R$ undergoing a Hopf bifurcation at the origin for $\mu = 0$, with $\varepsilon = 0.025$, $\eta = 0.01$, $\xi = 0$, and transition functions as in \eqref{graftranhopf}.
	}\label{figbifhopf}
	
\end{figure}

% Once the Hopf bifurcation has codimension at least one, this example shows that structural stability in $\Omega$ is not necessarily preserved under the nonlinear double regularization \eqref{eq2par1}. 
% Furthermore, even for $\xi = 0$, equilibrium curves may exist and persist for all sufficiently small positive values of $\varepsilon$ and $\eta$. Consider the vector fields 
 
 Since the Hopf bifurcation has codimension at least one, this example shows that structural stability in $\Omega$ is not necessarily preserved under the nonlinear double regularization \eqref{eq2par1}.  
 Furthermore, even for $\xi = 0$, equilibrium curves may exist and persist for all sufficiently small positive values of $\varepsilon$ and $\eta$. Consider the vector fields
\begin{equation}\label{sistemacurvaeq}
	X(x_1,x_2)= \left( \begin{array}{c}
	\frac{1}{3}\\ [6pt]
\frac{1}{3}
	\end{array}  \right) \qquad \mbox{and} \qquad
	Y(x_1,x_2)= \left( \begin{array}{c}
		1 \\[6pt]
		1
	\end{array}  \right),
\end{equation}
%together with the transition function $\varphi_B(x)$ defined in \eqref{casoBcod0}, and
together with the transition function $\varphi_B(x)$ as defined in \eqref{casoBcod0}, and
 \begin{equation}\label{casocurva}
 	\psi_2(y) =
 	\begin{cases}
 		-\frac{1}{y} - 1, & \text{if } y \ge 1, \\[6pt]
 		-y^2 + x + 2, & \text{if } |y| \le 1, \\[6pt]
 		\frac{1}{y} + 1, & \text{if } y \le -1.
 	\end{cases}
 \end{equation}
%  Under this configuration, additional equilibrium curves may emerge during the regularization process, such as $x = 1/40$, 
% while others remain invariant for any smoothing parameter, for instance
% 
Under this configuration, additional equilibrium curves may emerge during the regularization process, such as $x = 1/40$,  
while others remain invariant for any smoothing parameter, for instance,
\begin{equation}\label{curvasdeeq}
 x = 0
 \quad \text{and} \quad
 y = \frac{1 + 20x - 800x^2}{100\,(-1 - 10x + 400x^2)},
\end{equation}
 as illustrated in Figure~\ref{figcurveq}.

 \begin{figure}[htbp]
	\centering
	\setlength{\tabcolsep}{4pt} % espaçamento horizontal entre colunas
	\renewcommand{\arraystretch}{1.0} % espaçamento vertical entre linhas
	
	\begin{tabular}{ccc}
		% Coluna 1 (abaixada)
		\begin{minipage}[t]{0.32\textwidth}
			\vspace{-3cm} % ↓ Ajuste vertical (aumente ou reduza conforme necessário)
			\begin{overpic}[width=\textwidth]{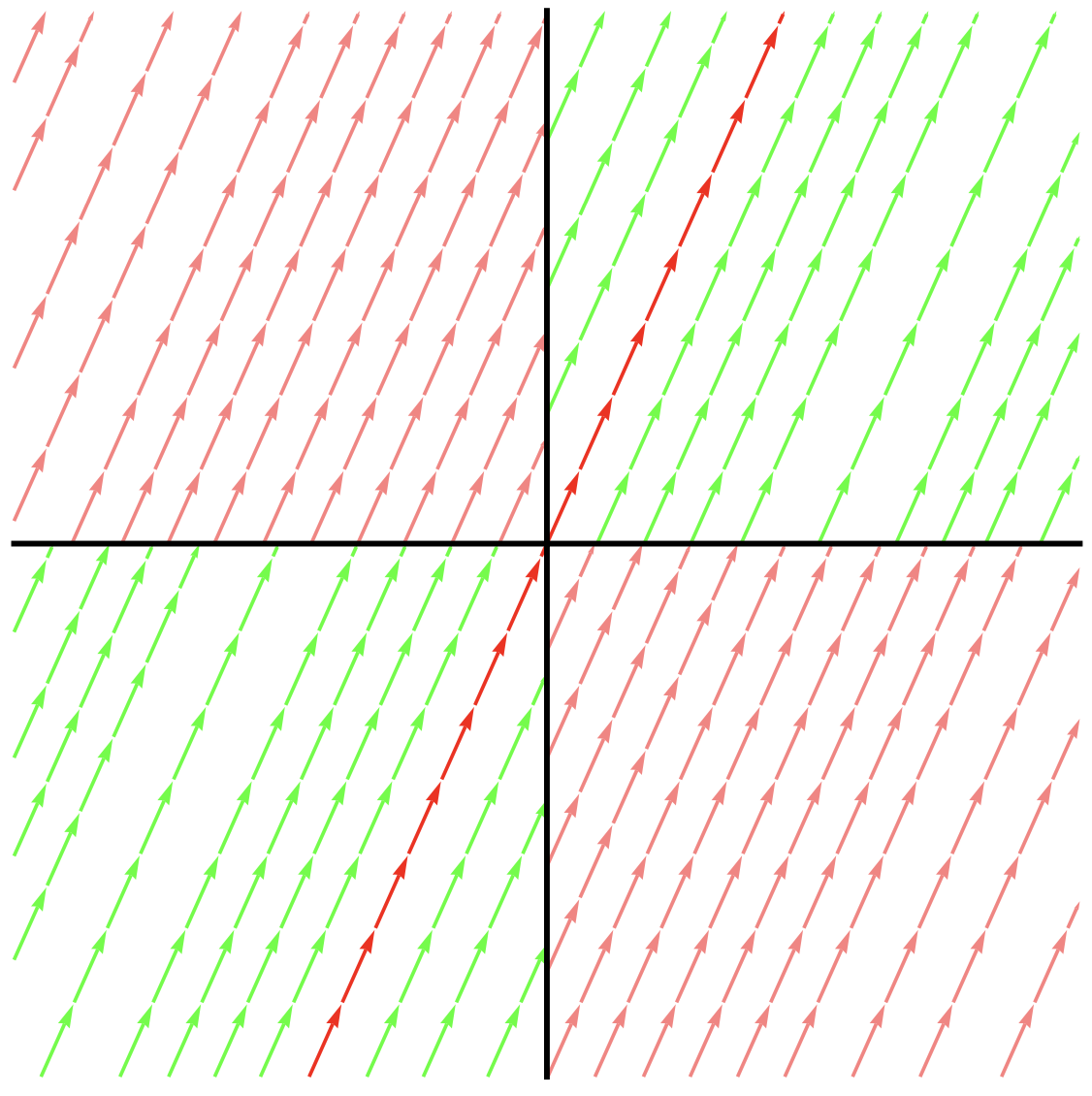}
				%	\put(95,-12){\small (a)}
			\end{overpic}
		\end{minipage} &
		% Coluna 2 (abaixada)
		\begin{minipage}[t]{0.25\textwidth}
			\vspace{-0.4cm} % ↓ Ajuste vertical
			\begin{overpic}[width=\textwidth]{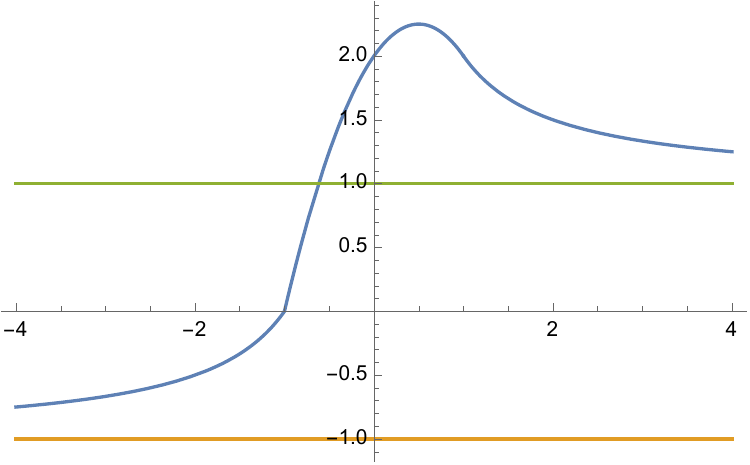}
				
				%	\put(95,-12){\small (b)}
				\put(45,90){{$\longrightarrow$}}	
				\put(45,105){{$Z_{\varepsilon, \eta}^R$}}
			\end{overpic}
		\end{minipage} &
		% Coluna 3 (duas figuras empilhadas)
		\begin{minipage}[t]{0.35\textwidth}
			\begin{overpic}[width=\textwidth]{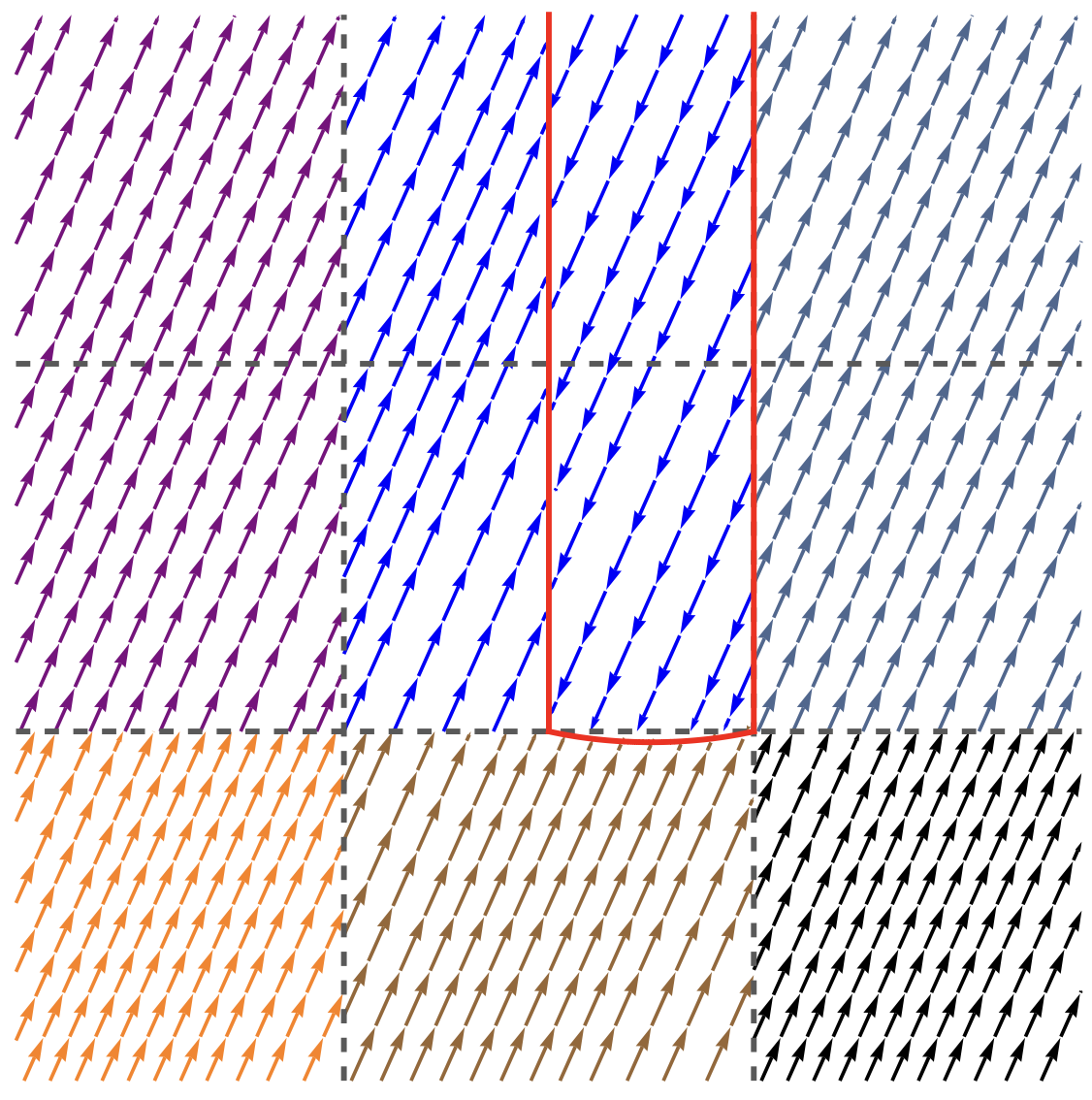}
				%	\put(90,90){\small (c)}
			\end{overpic}
			\vspace{0.6cm}
			\begin{overpic}[width=\textwidth]{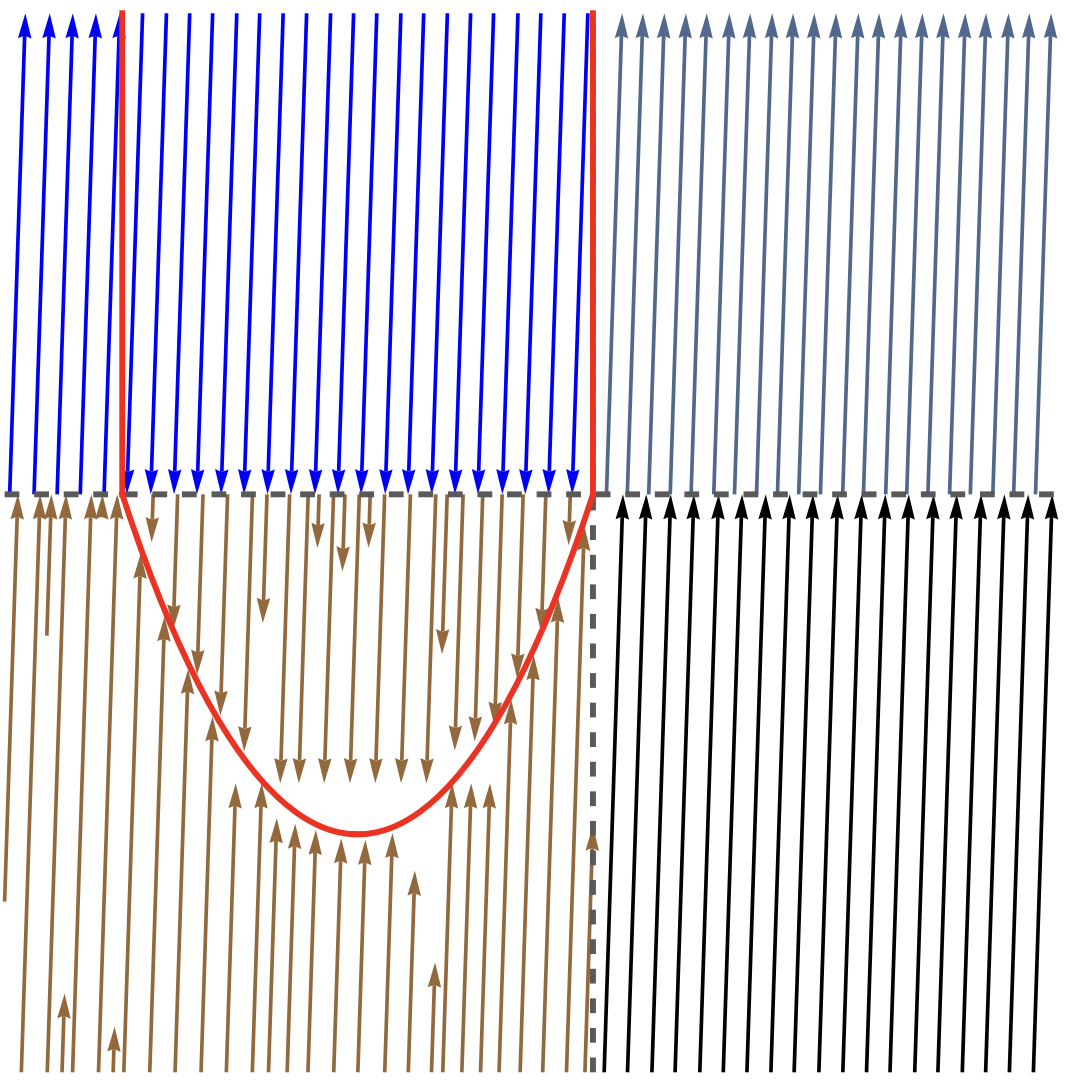}
				%	\put(90,90){\small (d)}
			\end{overpic}
		\end{minipage}
	\end{tabular}
	
	% $\;$
	
	\caption{
%		Vector field $Z$ given in \eqref{sistemacurvaeq}, and its nonlinear double regularization $Z_{\varepsilon, \eta}^R$ for $\varepsilon=0.025$, $\eta=0.01$, $\xi=0$, with the transition functions in \eqref{casocurva} and $\varphi_B$ given in \eqref{casoBcod0}, showing the equilibrium curves in \eqref{curvasdeeq} and $x = 1/40$, as well as the graph of the transition function in \eqref{casocurva}.
%		
	Vector field $Z$ given in \eqref{sistemacurvaeq}, and its nonlinear double regularization $Z_{\varepsilon, \eta}^R$ for $\varepsilon = 0.025$, $\eta = 0.01$, $\xi = 0$, with the transition functions in \eqref{casocurva} and $\varphi_B$ as given in \eqref{casoBcod0}, showing the equilibrium curves in \eqref{curvasdeeq} and at $x = 1/40$, as well as the graph of the transition function in \eqref{casocurva}.
	}\label{figcurveq}
\end{figure}

%So far, our examples have been constructed for $\xi = 0$. 
%We now turn our attention to cases with $\xi \neq 0$. 
%In this context, we consider a nonlinear double regularization 
%associated with a function $G$, defined by a homogeneous 
%polynomial in each coordinate, of the form

So far, our examples have been constructed for $\xi = 0$.  
We now turn our attention to cases with $\xi \neq 0$.  
In this context, we consider a nonlinear double regularization 
associated with a function $G$, defined as a homogeneous 
polynomial in each coordinate, of the form
$$ G(r,s) =\left( \begin{array}{l}
\displaystyle\sum_{i=0}^{m_{1}} a_{m_{1}-i, i} \, r^{m_{1}-i} \, s^{i} \\ [15pt]
\displaystyle\sum_{i=0}^{m_{2}}  b_{m_{2}-i,i}\, r^{\, m_{2}-i} \,  s^{ i}
\end{array}  \right), 
$$
%\[
%G(r,s) = \left(
%\sum_{i=0}^{m_{1}} a_{m_{1}-i, i} \, r^{m_{1}-i} \, s^{i} \; , \;
%\sum_{i=0}^{m_{2}}  b_{m_{2}-i,i}\, r^{\, m_{2}-i} \, s^{ i}
%\right),
%\]
with coefficients satisfying
\begin{align*}
	a_{m_1, 0} &= - \sum_{i=1}^{m_1} a_{m_1-i, i}, &
	a_{m_1-1, 1} &= - \frac{1}{2} \sum_{i=2}^{m_1} \left(1 + (-1)^{-i-1}\right) a_{m_1-i, i}, \\
	b_{m_2, 0} &= - \sum_{i=1}^{m_2} b_{m_2-i, i}, &
	b_{m_2-1, 1} &= - \frac{1}{2} \sum_{i=2}^{m_2} \left(1 + (-1)^{-i-1}\right) b_{m_2-i, i}.
\end{align*}
%For example, for $m_1 = 2$ and $m_2 = 3$,  
%$G(r,s) = (-r^2 + s^2, -r^2 s + s^3)$ satisfies the coefficient conditions. The nonlinear double regularization of \eqref{sistemahopf}, 
%with the transition functions in \eqref{casoC1cod0}, 
%then admits an equilibrium at the origin for each sufficiently small $\xi$, which also undergoes a Hopf bifurcation at $\mu=0$, as before. In the previous idea, we considered a homogeneous polynomial in each coordinate. 
%Since any polynomial of a given degree can be decomposed into a sum of homogeneous polynomials, 
%this approach can be extended to arbitrary polynomials with appropriate care.
%In the previous idea, we considered a homogeneous polynomial in each coordinate. 
%Since any polynomial of a given degree can be decomposed as a sum of homogeneous polynomials, 
%this approach can therefore be extended to arbitrary polynomials, provided that appropriate care is taken in accordance with \eqref{eq2par1}. For example, consider the continuous function
%
In the previous approach, we considered a homogeneous polynomial in each coordinate.  
Since any polynomial of a given degree can be decomposed as a sum of homogeneous polynomials,  
this approach can be extended to arbitrary polynomials, provided that appropriate care is taken in accordance with \eqref{eq2par1}.  
For example, consider the continuous function
\begin{equation}\label{eqG}
	G(r,s) =\left( \begin{array}{l}
		\displaystyle\sum_{i=0}^{2} a_{2-i,i} \, r^{2-i} s^{i} + 2 + a_{3,1} \, r^{3} s \\ [15pt]
		\displaystyle\sum_{i=0}^{3} b_{m_{2}-i,i} \, r^{3-i} s^{i} + \frac{7}{4} + b_{2,6} \, r^{2} s^{6}
%	\end{array}  \right
%	G(r,s) = \left(
%	\sum_{i=0}^{2} a_{2-i,i} \, r^{2-i} s^{i} + 2 + a_{3,1} \, r^{3} s \; , \;
%	\sum_{i=0}^{3} b_{m_{2}-i,i} \, r^{3-i} s^{i} + \frac{7}{4} + b_{2,6} \, r^{2} s^{6}
%	\right),
		\end{array}  \right),
\end{equation}
%which must satisfy the following coefficient conditions:
which satisfies the following conditions on its coefficients:
\begin{align*}
a_{2,0} &= -a_{1,1} - a_{0,2} - 2 - a_{3,1},   \qquad \qquad \; \,
a_{1,1} = -a_{3,1}, \\
b_{3,0} &= -b_{0,3} - b_{1,2} - b_{2,1} - \frac{7}{4} - b_{2,6}, \qquad
b_{2,1} = -b_{0,3}, \qquad
b_{2,6} = -\frac{7}{4}.
\end{align*}
%With this, the nonlinear double regularization of \eqref{sistemahopf}, 
%using the transition functions in \eqref{casoC1cod0} and the function $G$ given in \eqref{eqG}, together with the conditions
%
With this, the nonlinear double regularization of \eqref{sistemahopf},  
using the transition functions in \eqref{casoC1cod0} and the function $G$ given in \eqref{eqG}, together with the following conditions,
\begin{align*}
	a_{0,2} &= \frac{8}{5} \, \left(-4 + 7 \, a_{3,1}\right), \qquad
	b_{0,3} = \frac{1}{448} \, \left(-54425 - 512 \, b_{1,2}\right),
\end{align*}
admits an equilibrium at the origin for sufficiently small $\xi$, 
which also undergoes a Hopf bifurcation at $\mu = 0$, as discussed previously.

The function $G = (G_1, G_2)$ can also be defined in different configurations, 
for instance, by considering the coordinates as permutations of the following forms:
\begin{equation}
	\begin{aligned}
		G_i(r,s) &=
		\sum_{j=0}^{m_1} a_j \, |r|^{\alpha_j} \, |s|^{\beta_j}
		\quad \text{or} \quad
		G_i(r,s) =
		\sum_{j=0}^{m_2} b_j \, \operatorname{sgn}(r) \, |r|^{\alpha_j} \,
		\operatorname{sgn}(s) \, |s|^{\beta_j},
	\end{aligned}
\end{equation}
%where $\alpha_j$ and $\beta_j$ are negative real numbers.  
%In this case, the coefficients necessarily satisfy
where $\alpha_j$ and $\beta_j$ are negative real numbers.  
In this case, the coefficients must necessarily satisfy
\begin{align*}
	a_{0} &= - \sum_{i=1}^{m_1} a_{i}, \qquad
	b_{0} = - \sum_{i=1}^{m_2} b_{i}.
\end{align*}
%Since this case can be regarded as a generalization of the polynomial one, 
%we can, with appropriate care, also consider sums involving both 
%polynomials and functions of the latter type. 
%It is important to note that the function $G$ may not be differentiable at the origin. 
%Nevertheless, it is still possible to verify the existence of an equilibrium point. 
%For instance, by considering
Since this case can be regarded as a generalization of the polynomial case, 
we can, with appropriate care, also consider sums involving both 
polynomials and functions of the last type. 
It is important to note that the function $G$ may not be differentiable at the origin.
Nevertheless, it is still possible to verify the existence of an equilibrium point. 
For instance, by considering
\begin{equation}\label{eqG1}
G(r,s) = 
\left( \begin{array}{c}
	a_0 \,  |r|^{1/2} + a_1 \, |s| ^{7/2} + a_2 \, |r|^{10/3} + 2 + a_3 \, r^3 \, s \\[6pt]
b_0\,\mathrm{sgn}(r) \, |r|^{4} \,\mathrm{sgn}(s)\,|s|^{7/2}
	+ b_1\,\mathrm{sgn}(r)\,  |r|^{4} \, \mathrm{sgn}(s)\, |s|^{6}
	+ \dfrac{7}{4} + b_2 \,r^2 \, s^6
\end{array} \right),
\end{equation}
%so that the coefficients satisfy the following conditions:
such that the coefficients satisfy the following conditions:
\begin{align*}
	a_{0} = -a_{1} - a_{2} - 2 -a_3,   \quad
	a_{3} = 0,\quad
	b_{0} = -b_{1} - \frac{7}{4} - b_{2}, \quad
	b_{2} = -\frac{7}{4}.
\end{align*}
%In this way, the nonlinear double regularization of \eqref{sistemahopf}, 
%constructed using the transition functions in \eqref{casoC1cod0} and the function $G$ given in \eqref{eqG1}, with coefficients such that
In this way, the nonlinear double regularization of \eqref{sistemahopf},  
constructed using the transition functions in \eqref{casoC1cod0} and the function $G$ given in \eqref{eqG1}, with coefficients chosen such that
\begin{align*}
	a_1 =
	-\frac{128 \left( 2 \, (-1 + \sqrt{2}) + 2^{1/3} \, \big(-8 + 2^{1/6}\big) \, a_2 \right)}{128 \, \sqrt{2} - 343 \, \sqrt{7}}
	, \qquad
	b_1 = -\frac{116625}{784 \, \big(-343 + 32 \, \sqrt{7}\big)},
\end{align*}
%admits an equilibrium at the origin for sufficiently small $\xi$. 
%If $a_0 \neq 0$, then $G$ is not differentiable at the origin. 
%On the other hand, if $a_0 = 0$, that is,
%\[
%a_2 = \frac{-256 + 686\sqrt{7}}{1024\, 2^{1/3} - 343\sqrt{7}},
%\]
%the origin remains an equilibrium point for sufficiently small $\xi$, 
%and the system undergoes a Hopf bifurcation at $\mu = 0$, 
%as previously discussed, with $G$ being a mapping of class $C^3$ at the origin.
admits an equilibrium at the origin for sufficiently small $\xi$.  
If $a_0 \neq 0$, then $G$ is not differentiable at the origin.  
On the other hand, if $a_0 = 0$, that is,
\[
a_2 = \frac{-256 + 686\sqrt{7}}{1024 \, 2^{1/3} - 343\sqrt{7}},
\]
the origin remains an equilibrium point for sufficiently small $\xi$,  
and the system undergoes a Hopf bifurcation at $\mu = 0$,  
as previously discussed, with $G$ being a $C^3$ mapping at the origin.

%To conclude this section, we highlight another phenomenon that may arise during the smoothing process, in addition to the emergence of equilibrium curves discussed earlier. In the two-parameter regularization of piecewise-smooth systems, one may encounter situations where a regularization parameter is fixed, sufficiently small, and dependent on the bifurcation parameter. However, the dynamics of the regularized vector field may not be fully representative for understanding or comparing with the original piecewise-smooth field, in the sense that the regularization approaches the piecewise-smooth system only when both parameters simultaneously tend to zero. In the present case, only one parameter tends to zero. In this case, although the regularized vector field remains sufficiently close to the piecewise-smooth one, pointwise convergence with respect to the regularization parameters does not occur.

To conclude this section, we highlight another phenomenon that may arise during the smoothing process, in addition to the emergence of equilibrium curves discussed above. In the two-parameter regularization of piecewise-smooth systems, one may encounter situations in which a regularization parameter is fixed, sufficiently small, and dependent on the bifurcation parameter. However, the dynamics of the regularized vector field may not fully capture the behavior of the original piecewise-smooth system, in the sense that the regularization approaches the piecewise-smooth system only when both parameters simultaneously tend to zero. In the present case, only one parameter tends to zero. Consequently, although the regularized vector field remains sufficiently close to the piecewise-smooth one, pointwise convergence with respect to the regularization parameters does not occur.

%To illustrate the previous remark, in the regularization of the family~(\ref{eq2par2}), a bifurcation distinct from that described in Theorem~\ref{teo4} may occur for a fixed value $\eta = \eta_0$ is considered. However, this type of bifurcation is of limited interest for the purposes of our analysis, since the regularization approaches the piecewise-smooth system only when both $\varepsilon$ and $\eta$ simultaneously tend to zero, whereas in this case only $\varepsilon$ tends to zero. In a certain sense, this implies that such a bifurcation is ``far'' from the piecewise-smooth system. Hence, for $\xi=0$, the nonlinear double regularization associated with the family~(\ref{eq2par2}) can be expressed as

Illustrating the previous remark, consider the regularization of the family~(\ref{eq2par2}) for a fixed value $\eta = \eta_0$.
%, in which a bifurcation distinct from that described in Theorem~\ref{teo4} may occur. 
However, this type of bifurcation is of limited interest for the purposes of our analysis, since the regularization approaches the piecewise-smooth system only when both $\varepsilon$ and $\eta$ simultaneously tend to zero, whereas in this case only $\varepsilon$ tends to zero. In a certain sense, this implies that such a bifurcation is ``far'' from the piecewise-smooth system. Hence, for $\xi = 0$, the nonlinear double regularization associated with the family~(\ref{eq2par2}) can be expressed as
	\begin{equation}\label{eq2par3} 
	Z^R_{\varepsilon, \eta,\alpha} = \phi \left( \dfrac{x_1}{\varepsilon} \right)\psi \left( \dfrac{x_2}{\eta} \right)  \left( \frac{\widetilde{X_\alpha} - \widetilde{Y}}{2} \right) +  \left( \frac{\widetilde{X_\alpha} + \widetilde{Y}}{2} \right).
\end{equation}
%Assuming that the transition functions in~\eqref{eq2par3} are of the Sotomayor--Teixeira type and satisfy $\varphi(0) \neq 0$, it follows that there exists $p_0 \in (-\eta, \eta)$ such that $\psi(p_0) = 0$. Thus, we have that 
%$\bigl(0, -\alpha_0 / c_1\bigr) = (0, \eta \, p_0)$ is an equilibrium of system~\eqref{eq2par2} for the parameter value $\alpha = \alpha_0 = -\eta \, c_1 \, p_0$. We now perform the change of variables
Assuming that the transition functions in~\eqref{eq2par3} are of the Sotomayor--Teixeira type and satisfy $\varphi(0) \neq 0$, it follows that there exists $p_0 \in (-\eta, \eta)$ such that $\psi(p_0) = 0$. Consequently, 
$\bigl(0, -\alpha_0 / c_1\bigr) = (0, \eta \, p_0)$ is an equilibrium of system~\eqref{eq2par2} for the parameter value $\alpha = \alpha_0 = -\eta \, c_1 \, p_0$.  
We then perform the change of variables

\[
x_1 = r, \qquad x_2 = s + \eta p_0, \qquad \alpha = \mu - \eta \, c_1 \, p_0.
\]
%Under this transformation, system~\eqref{eq2par3} becomes
Under this transformation, system~\eqref{eq2par3} is transformed into
\begin{equation}\label{partelinearselano}
	Z^R_{\varepsilon, \eta,\mu} =
	\begin{pmatrix} 0 \\ a \mu / 2 \end{pmatrix} +
	\begin{pmatrix} -b \, c_2 / 2 & B_1 / \eta \\[2mm] 0 & (B_1 \, B_2 + c_1 \, B_3 \, \eta) / (2 \, \eta) \end{pmatrix} 
	\begin{pmatrix} r \\ s \end{pmatrix} +
	\begin{pmatrix} \cdots \\ B_4 \, s^2 + \cdots \end{pmatrix},
\end{equation}
where
\[
\begin{aligned}
	B_1 &= \phi(0) \,  \psi^\prime(p_0) \, a, &\quad B_2 &= \mu + 2 \, b \, a, \\
	B_3 &= a - 2 \, B_1 \, p_0, &\quad B_4 &= \frac{\phi(0) \, \psi^{\prime\prime}(p_0) \, (b - a c_1 \, p_0 \, \eta)}{\eta^2} - \frac{B_1 \, c_1}{2 \eta}.
\end{aligned}
\]
%Since we have fixed $\mu=0$, we choose $\eta_0$ as the solution, in $\eta$, of the following equation

Since we have fixed $\mu = 0$, we choose $\eta_0$ as the solution in $\eta$ of the following equation:
\begin{equation*} 
	(B_1 \, B_2 + c_1 \, B_3 \, \eta)_{|_{\mu=0}} = 0,
\end{equation*}
that is,
\begin{equation} \label{eqeta0}
	\eta_0 = \left(\frac{-B_1 \, B_2}{c_1 \, B_3}\right)_{\big|_{\mu=0}},
\end{equation}
%which vanishes the determinant of the linear part of \eqref{partelinearselano} for $\mu=0$, i.e., the entry in the second row and second column is zero for $\mu=0$. Note that $\eta_0$ is small and its denominator is nonzero, since $\phi(0) \in (-\varepsilon,\varepsilon)$ and $\mu$ is also small. To bring the linear part of $Z^R_{\varepsilon,\eta_0,\mu}$ at $(r,s)=(0,0)$ into its Jordan normal form, we apply the change of coordinates
which makes the determinant of the linear part of \eqref{partelinearselano} vanish for $\mu = 0$, i.e., the entry in the second row and second column is zero for $\mu = 0$. Note that $\eta_0$ is small and its denominator is nonzero, since $\phi(0) \in (-\varepsilon, \varepsilon)$ and $\mu$ is also small.  
To bring the linear part of $Z^R_{\varepsilon, \eta_0, \mu}$ at $(r, s) = (0, 0)$ into its Jordan normal form, we apply the change of coordinates
\[
r = x_1 + \frac{2 \, c_1 \,  (B_1 \, p_0 - 1)}{c_2 \, (2 \, a + b \, \mu)} \, x_2, \qquad s = x_2.
\]
%Under this transformation, the system \eqref{partelinearselano} becomes
Under this transformation, system~\eqref{partelinearselano} is transformed into
\begin{equation}\label{eq2par41} 
	Z^R_{\varepsilon,\eta,\mu} = g(x_1,x_2,\mu) =
	\begin{pmatrix}
		B_0(\mu) - \frac{b \, c_2}{2} x + \cdots \\[2mm]
		\frac{a \, \mu}{2} + B_5(\mu) \, x_2^2 + \cdots
	\end{pmatrix},
\end{equation}
where
\[
\begin{aligned}
B_0(\mu) & = \frac{a \, c_1 \, B_3 \, \mu}{b \, c_2 \, B_2}, \\
B_5(\mu) &= \frac{B_3}{2 \, B_2} - \frac{a \, \phi^\prime(0) \, B_3^2}{c_2 \, b \, \phi(0) \, B_2 \varepsilon} + \frac{\phi^\prime(0) \, \psi^\prime(p_0) \, p_0 \, B_3}{c_2 \, \varepsilon} - \frac{b \, \psi^{\prime\prime}(p_0) \, B_3^2}{\phi(0) \, (\psi^{\prime\prime}(p_0))^2 \, B_2^2} + \frac{\psi^{\prime\prime}(p_0) \, B_3}{\psi^\prime(p_0) \, B_2}.
\end{aligned}
\]

%In the notation of Theorem~\ref{Teosotobiftransc}, the eigenvector associated with the zero eigenvalue of the linear part of \eqref{eq2par41} is $v^T = w^T = (0,1)$. Therefore, we have
In the notation of Theorem~\ref{Teosotobiftransc}, the eigenvector associated with the zero eigenvalue of the linear part of \eqref{eq2par41} is $v^T = w^T = (0,1)$. Consequently, we have
\begin{equation}\label{TC1}
	\begin{aligned}
		&w^T g_\mu(0,0,0) = a/2 \neq 0,\\
		&w^T \big[D^2 g(0,0,0)(v,v)\big] = B_5(0) \neq 0,
	\end{aligned}
\end{equation}
%for $\varepsilon$ sufficiently small. That is, there exists $0 < \varepsilon_0$ such that \eqref{TC1} is satisfied for all $0 < \varepsilon < \varepsilon_0$ and $\eta = \eta_0$. By Theorem~\ref{Teosotobiftransc}, we conclude that the system \eqref{eq2par41} undergoes a saddle-node bifurcation at $(x_1,x_2,\mu) = (0,0,0)$, and consequently, the system \eqref{eq2par2} undergoes a saddle-node bifurcation at $(x_1,x_2,\alpha) = (0, -\alpha_0/c_1, \alpha_0)$ with $\alpha_0 = -\eta_0 c_1 p_0$, when $\eta = \eta_0$. In this case, the codimension and genericity are preserved.
for sufficiently small $\varepsilon$. That is, there exists $0 < \varepsilon_0$ such that \eqref{TC1} is satisfied for all $0 < \varepsilon < \varepsilon_0$ and $\eta = \eta_0$.  
By Theorem~\ref{Teosotobiftransc}, we conclude that system~\eqref{eq2par41} undergoes a saddle-node bifurcation at $(x_1, x_2, \mu) = (0, 0, 0)$. Consequently, system~\eqref{eq2par2} undergoes a saddle-node bifurcation at $(x_1, x_2, \alpha) = (0, -\alpha_0/c_1, \alpha_0)$ with $\alpha_0 = -\eta_0 c_1 p_0$, when $\eta = \eta_0$. In this case, both the codimension and genericity are preserved.

\section*{Acknowledgements}

%This work has been realized thanks to the Brazilian S˜ao Paulo Research Foundation (FAPESP) grant 2024/15612-6 and Conselho Nacional de Desenvolvimento
%Cient´ıfico e Tecnol´ogico (CNPq) grant 301878/2025-0; the Catalan AGAUR Agency
%grant 2021 SGR 00113; and the Spanish Ministerio de Ci´encia, Innovaci´on y Universidades, Agencia Estatal de Investigaci´on grants PID2022-136613NB-I00 and
%CEX2020-001084-M.

%This work was supported by the São Paulo Research Foundation (FAPESP) under grants 2016/00242-2, 2018/05098-2,2021/14695-7, 2022/03800-7 and the Spanish Ministry of Science, Innovation and Universities and the State Research Agency under Grant  PID2022-136613NB-I00 (or PK619449 PID2022-136613NB-I00).

%This work was supported by the São Paulo Research Foundation (FAPESP) under grants 2016/00242-2, 2018/05098-2, 2021/14695-7, and 2022/03800-7, and by the Spanish Ministry of Science, Innovation and Universities and the State Research Agency under grant PK619449 PID2022-136613NB-I00.
%(or PID2022-136613NB-I00).

This work was supported by the São Paulo Research Foundation (FAPESP) under Grants 2016/00242-2, 2018/05098-2, 2021/14695-7, 2022/03800-7, 2023/02959-5, and 2024/15612-6; by the Brazilian Federal Agency for Support and Evaluation of Graduate Education (CAPES) under Grant 88881.179491/2025-01; by the Brazilian National Council for Scientific and Technological Development (CNPq) under Grants 401974/2025-1 and 307706/2023-0; by the Spanish State Research Agency (Agencia Estatal de Investigación) under Grant PID2022-136613NB-I00; and by the French National Research Agency (Agence Nationale de la Recherche, ANR) under Grant ANR-23-CE40-0028.


\begin{thebibliography}{10}
	
	\bibitem{AlexSei1998}
	{\sc Alexander, J.~C., and Seidman, T.~I.}
	\newblock Sliding modes in intersecting switching surfaces, i: Blending.
	\newblock {\em Houston J. Math 24}, 3 (1998), 545--569.
	
	\bibitem{AndroLeontGor1973}
	{\sc Andronov, A.~A., Leontovich, E.~A., Gordon, I.~I., and Ma\u{\i}er, A.~G.}
	\newblock {\em Theory of bifurcations of dynamic systems on a plane}.
	\newblock Halsted Press [A division of John Wiley \& Sons], New York-Toronto,
	Ont.; Israel Program for Scientific Translations, Jerusalem-London, 1973.
	\newblock Translated from the Russian.
	
	\bibitem{AndVitKha1987}
	{\sc Andronov, A.~A., Vitt, A.~A., and Kha\u\i{}kin, S.~E.}
	\newblock {\em Theory of oscillators}.
	\newblock Dover Publications, Inc., New York, 1987.
	\newblock Translated from the Russian by F. Immirzi, Reprint of the 1966
	translation.
	
	\bibitem{Anosov1959}
	{\sc Anosov, D.~V.}
	\newblock On stability of equilibrium states of relay systems.
	\newblock {\em Avtomatika i Telemehanika 20\/} (1959), 135--149.
	
	\bibitem{AprBanVanBruBouFaIrRadRinVee2012}
	{\sc Apri, M., Banagaaya, N., Van Den~Berg, J., Brussee, R., Bourne, D.,
		Fatima, T., Irzal, F., Rademacher, J., Rink, B., Veerman, F., et~al.}
	\newblock Analysis of a model for ship maneuvering.
	\newblock In {\em 79th European Study Group with Industry, January 24-28, 2011,
		Amsterdam, The Netherlands\/} (2012), Vrije Universiteit Amsterdam,
	pp.~83--116.
	
	\bibitem{Barbashin1970}
	{\sc Barbashin, E.~A.}
	\newblock {\em Introduction to the theory of stability}.
	\newblock Translated from the Russian by Transcripta Service, London. Edited by
	T. Lukes. Wolters-Noordhoff Publishing, Groningen, 1970.
	
	\bibitem{BonLarTere2018}
	{\sc Bonet-Reves, C., Larrosa, J., Tere, M., et~al.}
	\newblock Regularization around a generic codimension one fold-fold
	singularity.
	\newblock {\em Journal of Differential Equations 265}, 5 (2018), 1761--1838.
	
	\bibitem{BonTere2016}
	{\sc Bonet-Rev\'{e}s, C., and M-Seara, T.}
	\newblock Regularization of sliding global bifurcations derived from the local
	fold singularity of {F}ilippov systems.
	\newblock {\em Discrete Contin. Dyn. Syst. 36}, 7 (2016), 3545--3601.
	
	\bibitem{BruPuSim2001}
	{\sc Broucke, M.~E., Pugh, C., and Simic, S.~N.}
	\newblock Structural stability of piecewise smooth systems.
	\newblock {\em Computational and applied mathematics 20}, 1-2 (2001), 51--89.
	
	\bibitem{BuzSilTei2006}
	{\sc Buzzi, C.~A., da~Silva, P.~R., and Teixeira, M.~A.}
	\newblock A singular approach to discontinuous vector fields on the plane.
	\newblock {\em Journal of Differential Equations 231}, 2 (2006), 633--655.
	
	\bibitem{BuzMedTor2020}
	{\sc Buzzi, C.~A., Medrado, J.~C., and Torregrosa, J.}
	\newblock Limit cycles in 4-star-symmetric planar piecewise linear systems.
	\newblock {\em J. Differential Equations 268}, 5 (2020), 2414--2434.
	
	\bibitem{BuzSan2019}
	{\sc Buzzi, C.~A., and Santos, R. A.~T.}
	\newblock Regularization of saddle-fold singularity for nonsmooth differential
	systems.
	\newblock {\em J. Math. Anal. Appl. 474}, 2 (2019), 1036--1048.
	
	\bibitem{SilMezNov2022}
	{\sc da~Silva, P.~R., Meza-Sarmiento, I.~S., and Novaes, D.~D.}
	\newblock Nonlinear sliding of discontinuous vector fields and singular
	perturbation.
	\newblock {\em Differential Equations and Dynamical Systems 30}, 3 (2022),
	675--693.
	
	\bibitem{SilNun2019}
	{\sc da~Silva, P.~R., and Nunes, W.~P.}
	\newblock Slow--fast systems and sliding on codimension 2 switching manifolds.
	\newblock {\em Dynamical Systems 34}, 4 (2019), 613--639.
	
	\bibitem{BerBudCha2001}
	{\sc di~Bernardo, M., Budd, C., and Champneys, A.}
	\newblock Corner collision implies border-collision bifurcation.
	\newblock {\em Physica D: Nonlinear Phenomena 154}, 3-4 (2001), 171--194.
	
	\bibitem{BerBudChaKow2008}
	{\sc di~Bernardo, M., Budd, C.~J., Champneys, A.~R., and Kowalczyk, P.}
	\newblock {\em Piecewise-smooth dynamical systems}, vol.~163 of {\em Applied
		Mathematical Sciences}.
	\newblock Springer-Verlag London, Ltd., London, 2008.
	
	\bibitem{DiPagPon2008}
	{\sc Di~Bernardo, M., Pagano, D.~J., and Ponce, E.}
	\newblock Nonhyperbolic boundary equilibrium bifurcations in planar filippov
	systems: a case study approach.
	\newblock {\em International Journal of Bifurcation and chaos 18}, 05 (2008),
	1377--1392.
	
	\bibitem{DieEli2015}
	{\sc Dieci, L., and Elia, C.}
	\newblock Piecewise smooth systems near a co-dimension 2 discontinuity
	manifold: can one say what should happen?
	\newblock {\em arXiv preprint arXiv:1508.02639\/} (2015).
	
	\bibitem{DieLop2011}
	{\sc Dieci, L., and Lopez, L.}
	\newblock Sliding motion on discontinuity surfaces of high co-dimension. a
	construction for selecting a filippov vector field.
	\newblock {\em Numerische Mathematik 117\/} (2011), 779--811.
	
	\bibitem{SanTon2023}
	{\sc dos Santos, M.~J., and Tonon, D.~J.}
	\newblock Structural stability for 2-dimensional piecewise smooth vector fields
	where the switching manifold is a double discontinuity.
	\newblock {\em Journal of Dynamical and Control Systems 29}, 4 (2023),
	1775--1808.
	
	\bibitem{FanXueChen2018}
	{\sc Fan, J., Xue, S., and Chen, G.}
	\newblock On discontinuous dynamics of a periodically forced double-belt
	friction oscillator.
	\newblock {\em Chaos, Solitons \& Fractals 109\/} (2018), 280--302.
	
	\bibitem{Fil1988}
	{\sc Filippov, A.~F.}
	\newblock {\em Differential equations with discontinuous righthand sides},
	vol.~18 of {\em Mathematics and its Applications (Soviet Series)}.
	\newblock Kluwer Academic Publishers Group, Dordrecht, 1988.
	
	\bibitem{GuaSeaTeixeira2011}
	{\sc Guardia, M., Seara, T.~M., and Teixeira, M.~A.}
	\newblock Generic bifurcations of low codimension of planar {F}ilippov systems.
	\newblock {\em J. Differential Equations 250}, 4 (2011), 1967--2023.
	
	\bibitem{Hen1997}
	{\sc Henson, M.~A., and Seborg, D.~E.}
	\newblock {\em Nonlinear process control}.
	\newblock Prentice Hall PTR Upper Saddle River, New Jersey, 1997.
	
	\bibitem{Jeffrey2014}
	{\sc Jeffrey, M.~R.}
	\newblock Hidden dynamics in models of discontinuity and switching.
	\newblock {\em Physica D: Nonlinear Phenomena 273\/} (2014), 34--45.
	
	\bibitem{Kozlova1984}
	{\sc Kozlova, V.~S.}
	\newblock Structural stability of a discontinuous system.
	\newblock {\em Vestnik Moskovskogo Universiteta. Seriya 1. Matematika.
		Mekhanika}, 5 (1984), 16--20.
	
	\bibitem{KriHog2015}
	{\sc Kristiansen, K.~U., and Hogan, S.~J.}
	\newblock Regularizations of two-fold bifurcations in planar piecewise smooth
	systems using blowup.
	\newblock {\em SIAM J. Appl. Dyn. Syst. 14}, 4 (2015), 1731--1786.
	
	\bibitem{Kuznetsov1998}
	{\sc Kuznetsov, Y.~A., Kuznetsov, I.~A., and Kuznetsov, Y.}
	\newblock {\em Elements of applied bifurcation theory}, vol.~112.
	\newblock Springer, 1998.
	
	\bibitem{KusGraRin2003}
	{\sc Kuznetsov, Y.~A., Rinaldi, S., and Gragnani, A.}
	\newblock One-parameter bifurcations in planar {F}ilippov systems.
	\newblock {\em Internat. J. Bifur. Chaos Appl. Sci. Engrg. 13}, 8 (2003),
	2157--2188.
	
	\bibitem{LarSeaTei2021}
	{\sc Larrosa, J., M-Seara, T., and Teixeira, M.~A.}
	\newblock Local generic behavior of a planar filippov system with non-smooth
	switching curve.
	\newblock {\em S{\~a}o Paulo Journal of Mathematical Sciences 15\/} (2021),
	882--915.
	
	\bibitem{LeiNij2004}
	{\sc Leine, R.~I., Nijmeijer, H., Leine, R.~I., and Nijmeijer, H.}
	\newblock Bifurcations of equilibria in non-smooth continuous systems.
	\newblock {\em Dynamics and bifurcations of non-smooth mechanical systems\/}
	(2004), 125--176.
	
	\bibitem{Liberzon2003}
	{\sc Liberzon, D.}
	\newblock {\em Switching in systems and control}, vol.~190.
	\newblock Springer, 2003.
	
	\bibitem{LliSilTei2007}
	{\sc Llibre, J., da~Silva, P.~R., and Teixeira, M.~A.}
	\newblock Regularization of discontinuous vector fields on {$ \mathbb{R}^3$}
	via singular perturbation.
	\newblock {\em J. Dynam. Differential Equations 19}, 2 (2007), 309--331.
	
	\bibitem{LliSilTei2008}
	{\sc Llibre, J., da~Silva, P.~R., and Teixeira, M.~A.}
	\newblock Sliding vector fields via slow-fast systems.
	\newblock {\em Bull. Belg. Math. Soc. Simon Stevin 15}, 5 (2008), 851--869.
	
	\bibitem{LliSilTei2009}
	{\sc Llibre, J., Da~Silva, P.~R., and Teixeira, M.~A.}
	\newblock Study of singularities in nonsmooth dynamical systems via singular
	perturbation.
	\newblock {\em SIAM Journal on Applied Dynamical Systems 8}, 1 (2009),
	508--526.
	
	\bibitem{LliSilTei2015}
	{\sc Llibre, J., Da~Silva, P.~R., and Teixeira, M.~A.}
	\newblock Sliding vector fields for non-smooth dynamical systems having
	intersecting switching manifolds.
	\newblock {\em Nonlinearity 28}, 2 (2015), 493.
	
	\bibitem{LliMerNov2015}
	{\sc Llibre, J., Mereu, A.~C., and Novaes, D.~D.}
	\newblock Averaging theory for discontinuous piecewise differential systems.
	\newblock {\em Journal of differential equations 258}, 11 (2015), 4007--4032.
	
	\bibitem{LliSot1996}
	{\sc Llibre, J., and Sotomayor, J.}
	\newblock Phase portraits of planar control systems.
	\newblock {\em Nonlinear Analysis: Theory, Methods \& Applications 27}, 10
	(1996), 1177--1197.
	
	\bibitem{LliTei1997}
	{\sc Llibre, J., and Teixeira, M.~A.}
	\newblock Regularization of discontinuous vector fields in dimension three.
	\newblock {\em Discrete Contin. Dynam. Systems 3}, 2 (1997), 235--241.
	
	\bibitem{LliTei2015}
	{\sc Llibre, J., and Teixeira, M.~A.}
	\newblock Limit cycles for {$m$}-piecewise discontinuous polynomial
	{L}i\'{e}nard differential equations.
	\newblock {\em Z. Angew. Math. Phys. 66}, 1 (2015), 51--66.
	
	\bibitem{LucGaj2006}
	{\sc Lucero, J.~C., and Gajo, C.~A.}
	\newblock Oscillation region of a piecewise-smooth model of the vocal folds.
	
	\bibitem{Maciel2009}
	{\sc Maciel, A.~L.}
	\newblock Bifurcações de campos vetoriais descontínuos, 2009.
	\newblock PhD Thesis, IME-USP,
	https://teses.usp.br/teses/disponiveis/45/45132/tde-16082009-201102/en.php.
	
	\bibitem{MakLam2012}
	{\sc Makarenkov, O., and Lamb, J.~S.}
	\newblock Dynamics and bifurcations of nonsmooth systems: A survey.
	\newblock {\em Physica D: Nonlinear Phenomena 241}, 22 (2012), 1826--1844.
	
	\bibitem{MarMcC2012}
	{\sc Marsden, J.~E., and McCracken, M.}
	\newblock {\em The Hopf bifurcation and its applications}, vol.~19.
	\newblock Springer Science \& Business Media, 2012.
	
	\bibitem{Murray2002}
	{\sc Murray, J.~D.}
	\newblock Mathematical biology: I. an introduction. interdisciplinary applied
	mathematics.
	\newblock {\em Mathematical Biology, Springer 17\/} (2002).
	
	\bibitem{Nov2014}
	{\sc Novaes, D.~D.}
	\newblock On nonsmooth perturbations of nondegenerate planar centers.
	\newblock {\em Publ. Mat. 58}, suppl. (2014), 395--420.
	
	\bibitem{NovJeff2015}
	{\sc Novaes, D.~D., and Jeffrey, M.~R.}
	\newblock Regularization of hidden dynamics in piecewise smooth flows.
	\newblock {\em Journal of differential equations 259}, 9 (2015), 4615--4633.
	
	\bibitem{NovRon2021}
	{\sc Novaes, D.~D., and Rond\'{o}n, G.}
	\newblock Smoothing of nonsmooth differential systems near regular-tangential
	singularities and boundary limit cycles.
	\newblock {\em Nonlinearity 34}, 6 (2021), 4202--4263.
	
	\bibitem{PanPau2017}
	{\sc Panazzolo, D., and da~Silva, P.~R.}
	\newblock Regularization of discontinuous foliations: blowing up and sliding
	conditions via {F}enichel theory.
	\newblock {\em J. Differential Equations 263}, 12 (2017), 8362--8390.
	
	\bibitem{PeiPei1959}
	{\sc Peixoto, M.~C., and Peixoto, M.~M.}
	\newblock Structural stability in the plane with enlarged boundary conditions.
	\newblock {\em An. Acad. Brasil. Ci 31}, 2 (1959), 135--160.
	
	\bibitem{Pei1962}
	{\sc Peixoto, M.~M.}
	\newblock Structural stability on two-dimensional manifolds.
	\newblock {\em Topology 1\/} (1962), 101--120.
	
	\bibitem{PerRonSil2023}
	{\sc Perez, O.~H., Rond{\'o}n, G., and da~Silva, P.~R.}
	\newblock Slow-fast normal forms arising from piecewise smooth vector fields.
	\newblock {\em Journal of Dynamical and Control Systems 29}, 4 (2023),
	1709--1726.
	
	\bibitem{Perko2013}
	{\sc Perko, L.}
	\newblock {\em Differential equations and dynamical systems}, vol.~7.
	\newblock Springer Science \& Business Media, 2013.
	
	\bibitem{PerBar2002}
	{\sc Perruquetti, W., and Barbot, J.~P.}
	\newblock {\em Sliding mode control in engineering}, vol.~11.
	\newblock Marcel Dekker New York, 2002.
	
	\bibitem{Russ1998}
	{\sc Roussarie, R., and Roussarie, R.}
	\newblock {\em Bifurcation of planar vector fields and Hilbert's sixteenth
		problem}, vol.~164.
	\newblock Springer, 1998.
	
	\bibitem{SimJohn2010}
	{\sc Simpson, D. J.~W.}
	\newblock {\em Bifurcations in piecewise-smooth continuous systems}, vol.~70.
	\newblock World Scientific, 2010.
	
	\bibitem{Smirnov2002}
	{\sc Smirnov, G.~V.}
	\newblock {\em Introduction to the theory of differential inclusions}, vol.~41
	of {\em Graduate Studies in Mathematics}.
	\newblock American Mathematical Society, Providence, RI, 2002.
	
	\bibitem{Sotomayor1974}
	{\sc Sotomayor, J.}
	\newblock Generic one-parameter families of vector fields on two-dimensional
	manifolds.
	\newblock {\em Publications Math{\'e}matiques de l'IH{\'E}S 43\/} (1974),
	5--46.
	
	\bibitem{Soto1979}
	{\sc Sotomayor, J.}
	\newblock {\em Li\c{c}\~oes de equa\c{c}\~oes diferenciais ordin\'arias},
	vol.~11.
	\newblock Instituto de Matem{\'a}tica Pura e Aplicada, CNPq, 1979.
	
	\bibitem{SotoMac2002}
	{\sc Sotomayor, J., and Machado, A. L.~F.}
	\newblock Structurally stable discontinuous vector fields in the plane.
	\newblock {\em Qual. Theory Dyn. Syst. 3}, 1 (2002), 227--250.
	
	\bibitem{SotTei1998}
	{\sc Sotomayor, J., and Teixeira, M.~A.}
	\newblock Regularization of discontinuous vector fields.
	\newblock In {\em International {C}onference on {D}ifferential {E}quations
		({L}isboa, 1995)}. World Sci. Publ., River Edge, NJ, 1998, pp.~207--223.
	
	\bibitem{Stewart2000}
	{\sc Stewart, D.~E.}
	\newblock Rigid-body dynamics with friction and impact.
	\newblock {\em SIAM Rev. 42}, 1 (2000), 3--39.
	
	\bibitem{TanOsoBer2009}
	{\sc Tanelli, M., Osorio, G., di~Bernardo, M., Savaresi, S.~M., and Astolfi,
		A.}
	\newblock Existence, stability and robustness analysis of limit cycles in
	hybrid anti-lock braking systems.
	\newblock {\em Internat. J. Control 82}, 4 (2009), 659--678.
	
	\bibitem{Teixeira1977}
	{\sc Teixeira, M.~A.}
	\newblock Generic bifurcation in manifolds with boundary.
	\newblock {\em Journal of Differential Equations 25}, 1 (1977), 65--89.
	
	\bibitem{Teixeira1990}
	{\sc Teixeira, M.~A.}
	\newblock Stability conditions for discontinuous vector fields.
	\newblock {\em Journal of Differential Equations 88}, 1 (1990), 15--29.
	
	\bibitem{TeiSilva2012}
	{\sc Teixeira, M.~A., and da~Silva, P.~R.}
	\newblock Regularization and singular perturbation techniques for non-smooth
	systems.
	\newblock {\em Phys. D 241}, 22 (2012), 1948--1955.
	
	\bibitem{Utkin2013}
	{\sc Utkin, V.~I.}
	\newblock {\em Sliding modes in control and optimization}.
	\newblock Springer Science \& Business Media, 2013.
	
	\bibitem{Van1920}
	{\sc van~der Pol, B.}
	\newblock A theory of the amplitude of free and forced triode vibrations.
	\newblock {\em Radio Rev. 1\/} (1920), 701--710.
	
	\bibitem{Van1926}
	{\sc van~der Pol, B.}
	\newblock On relaxation-oscillations.
	\newblock {\em Lond. Edinb. Dublin Philos. Mag. J. Sci. 2}, 7 (1926), 978--992.
	
	\bibitem{ZhuMose2003}
	{\sc Zhusubaliyev, Z.~T., and Mosekilde, E.}
	\newblock {\em Bifurcations and chaos in piecewise-smooth dynamical systems},
	vol.~44.
	\newblock World Scientific, 2003.
	
\end{thebibliography}
\end{document}